\title{Quadric Complexes}
\author{Nima Hoda}
\address{D{\' e}partement de math{\' e}matiques et applications \\
  {\' E}cole normale sup{\' e}rieure, 45 rue d'Ulm, 75005 Paris, France
  \\ \ \\ Instytut Matematyczny,
  Uniwersytet Wroc\l awski\\
  pl.\ Grun\-wal\-dzki 2/4,
  50--384 Wroc{\l}aw, Poland}
\email{nima.hoda@mail.mcgill.ca}
\thanks{During the course of this research and the writing of this
  paper, the author was partially funded by an NSERC CGS M, an ISM
  scholarship, a Lorne Trottier fellowship and by the ERC grant
  GroIsRan.}
\date{\today}
\keywords{quadric complexes, square complexes, combinatorial
  nonpositive curvature, invariant biclique, small cancellation
  theory, \texorpdfstring{$\cftf$}{C(4)-T(4)}, geometric group theory,
  dismantlable graphs}
\subjclass[2010]{Primary 20F65, 
  57M20, 
  20F06; 
  Secondary 05C12} 
\newcommand{\sk}[2]{#2^{#1}}
\newcommand{\ff}[1]{\overline{#1}}
\DeclareMathOperator{\scC}{C}
\DeclareMathOperator{\scT}{T}
\DeclareMathOperator{\cftf}{C(4)-T(4)}
\DeclareMathOperator{\CAT}{CAT}
\newcommand{\p}{\text{P}}
\newcommand{\catname}[1]{{\normalfont\textbf{#1}}}
\newcommand{\bfslt}{\prec}
\newcommand{\bfsle}{\preceq}
\newcommand{\ucov}[1]{\widetilde{#1}}
\newcommand{\mcS}{\mathcal{S}}
\newcommand{\mcR}{\mathcal{R}}
\tikzset{
  vertex/.style={circle,minimum size=0.15cm,inner sep=0,fill=black},
  thickeraser/.style={line width=2.4pt, white}
}
\begin{document}
\begin{abstract}
  Quadric complexes are square complexes satisfying a certain
  combinatorial nonpositive curvature condition.  These complexes
  generalize \texorpdfstring{$2$-dimensional}{2-dimensional}
  \texorpdfstring{$\CAT(0)$}{CAT(0)} cube complexes and are a square
  analog of systolic complexes.  We introduce and study the basic
  properties of these complexes.  Using a form of dismantlability for
  the \texorpdfstring{$1$-skeleta}{1-skeleta} of finite quadric
  complexes we show that every finite group acting on a quadric
  complex stabilizes a complete bipartite subgraph of its
  \texorpdfstring{$1$-skeleton}{1-skeleton}.  Finally, we prove that
  \texorpdfstring{$\cftf$}{C(4)-T(4)} small cancellation groups act on
  quadric complexes.
\end{abstract}

\maketitle

The study of groups acting on combinatorially nonpositively curved
spaces has been an ongoing theme in group theory tracing its origins
to Dehn's study of the fundamental groups of closed hyperbolic
surfaces \cite{Dehn:1987}, continuing with small cancellation theory
\cite{Greendlinger:1960} and reaching more recent developments after
the advent of geometric group theory \cite{Gromov:1987}.  One such
development is the introduction of systolic complexes by Januszkiewicz
and {\' S}wi{\c a}tkowski \cite{Januszkiewicz:2006} and independently
by Haglund \cite{Haglund:2003}.  This class of simplicial complexes
first arose years earlier in the form of \defterm{bridged graphs}
defined by Soltan and Chepoi \cite{Soltan:1983, Chepoi:2000} in the
context of metric graph theory.  The flag completions of bridged
graphs are precisely the systolic complexes so these are essentially
the same objects.  The development of systolic complexes represents a
simplicial version of the cubical combinatorial nonpositive curvature
theory of $\CAT(0)$ cube complexes which were introduced by Gromov
\cite{Gromov:1987} but which can be traced back to median graphs
studied in metric graph theory \cite{Avann:1961, Nebesky:1971,
  Klavzar:1999, Roller:1998, Gerasimov:1998, Chepoi:2000}.  These two
theories have since been given a common generalization in the form of
the bucolic complexes \cite{Bresar:2013}.

In this paper we introduce the combinatorial nonpositive curvature
theory of quadric complexes.  Quadric complexes are closely related to
systolic complexes but have square rather than triangular $2$-cells, as
in the case of $\CAT(0)$ cube complexes.  We emphasize that although
$2$-dimensional $\CAT(0)$ cube complexes are quadric, the same does not
hold in higher dimensions where these theories differ strikingly.
Quadric complexes are defined by a disc diagrammatic nonpositive
curvature condition, similar to that described in Wise's presentation
of systolic complexes \cite{Wise:2003}.  Essentially, the local
condition satisfied by quadric complexes is that the ``star'' of any
``positively curved'' vertex of a disc diagram can be replaced by a
quadrangulation with no internal vertices while the local condition
satisfied by systolic complexes is that the star of any positively
curved vertex of a disc diagram can be replaced by a triangulation
with no internal vertices \cite{Wise:2003}.  Despite these
connections, there exist quadric groups that are not virtually
systolic (e.g. \Exref{elsner}) and quadric groups that are not
virtually cocompactly cubulated (e.g. \Exref{huang}).

As in the case of systolic complexes, the $1$-skeleta of quadric
complexes can be characterized by forbidden isometric subgraph
conditions.  Moreover, the cell structure of a quadric complex can be
recovered from its $1$-skeleton.  We thus find that the $1$-skeleta of
quadric complexes are precisely the hereditary modular graphs studied
in metric graph theory \cite{Bandelt:1988}.  Hence, as for systolic
complexes and $\CAT(0)$ cube complexes, a theory arising naturally in
geometric group theory has a precursor in metric graph theory.  By
some doubling, subdivision of squares and identification of cells,
quadric complexes can also be viewed as right-angled triangle
complexes.  They can thus be viewed as a generalization of the folder
complexes of Chepoi \cite{Chepoi:2000}, whose leg graphs satisfy our
forbidden subgraph conditions but which are further restricted.  Huang
and Osajda have introduced a common generalization of systolic and
quadric groups called metrically systolic groups, which are
essentially groups acting on metric simplicial complexes whose disc
diagrams are $\CAT(0)$ \cite{Huang:2017}.

Our main results are the following two theorems.

\begin{mainthm}[\Thmref{ibt}, Invariant Biclique Theorem]
  Let $G$ be a finite group acting on a locally finite quadric complex
  $X$, which is not equal to a single vertex.  Then $G$ stabilizes a
  biclique of $X$.
\end{mainthm}

In order to prove the Invariant Biclique Theorem finite quadric
complexes we use the fact, first proved by Bandelt
\cite[Theorem~2]{Bandelt:1988}, that the $1$-skeleta of finite quadric
complexes satisfy a form of dismantlability.  Our proof follows that
of Hensel et al. \cite{Hensel:2014} and Chepoi \cite{Chepoi:1997} for
finite systolic complexes.  We then apply a theorem of Hanlon and
Martinez-Pedroza \cite{Hanlon:2014} to lift this result to locally
finite quadric complexes.

The bi-dismantlability of $1$-skeleta of quadric complexes also plays an
essential role in an upcoming proof of the contractibility of quadric
complexes after the addition of certain higher dimensional cells
\cite{Hoda:2018}.

\begin{mainthm}[\Corref{cftfpresquad}]
  Let $G$ be a group admitting a finite $\cftf$ presentation
  $\gpres{\mcS}{\mcR}$.  Then $G$ acts properly and cocompactly on a
  quadric complex.
\end{mainthm}

We call a group acting properly and cocompactly on a quadric complex a
quadric group.  The proof that finitely presented $\cftf$ groups are
quadric uses the construction of a square complex $X_Y$ from a given
$2$-complex $Y$ with embedded $2$-cells.  We show that this square complex
$X_Y$ is simply connected when $Y$ is and that it is quadric when $Y$
is additionally $\cftf$.

\subsection{Structure of the Text}
\sseclabel{struct}

The rest of this section gives some basic definitions used throughout
the text and states conventions followed in the remaining sections.
\Secref{quadric} defines our main objects of study, quadric complexes
and quadric groups, and gives some of their basic properties.
\Secref{invbc} defines bi-dismantlability for bipartite graphs and uses
this property to prove the Invariant Biclique Theorem.  Finally,
\Secref{cftfgp} recalls the definition and basic properties of $\cftf$
complexes and proves that $\cftf$ groups are quadric.

\subsection{Basic Definitions}
\sseclabel{basicdef}

For fundamental notions such as that of \defterm{CW-complexes} and the
\defterm{fundamental group} see Hatcher's textbook on algebraic
topology \cite{Hatcher:2002}.  Let $X$ and $Y$ be $2$-dimensional
CW-complexes.  A \defterm{combinatorial map} from $X$ to $Y$ is a
continuous map whose restriction to every open cell $e$ of $X$ is a
homeomorphism from $e$ to an open cell of $Y$.  Two such maps are
considered the same if they are homotopic via a homotopy that is a
combinatorial map at each instant of time.  (Such a homotopy
necessarily restricts to an isotopy on each cell.)  A $2$-complex is
combinatorial if the attaching map of each of its $2$-cells is a
combinatorial map from the circle $\Sph^1$ endowed with the structure
of a $1$-dimensional CW-complex (i.e. a cycle graph).  A combinatorial
$2$-complex $X$ is \defterm{locally finite} if every cell of $X$
intersects finitely many other cells.

A \defterm{graph} is a $1$-dimensional CW complex $\Gamma$.  Every such
complex is combinatorial.  The \defterm{valence} of a $0$-cell of
$\Gamma$ is the number of ends of $1$-cells incident to it.  If no
$1$-cell of $\Gamma$ has both of its endpoints attached to the same
$0$-cell and no two $1$-cells of $\Gamma$ have their endpoints attached to
the same unordered pair of $0$-cells then $\Gamma$ is
\defterm{simplicial}.  The vertex set of any connected graph has a
natural metric, the \defterm{standard graph metric}, defined for a
pair of vertices $u$ and $v$ by the number of edges in the shortest
path connecting $u$ and $v$.  A simplicial graph $\Gamma$ is
\defterm{bipartite} if its $0$-cells can be partitioned into two
nonempty sets such that no $1$-cell has both of its endpoints in the
same part.  If every pair of $0$-cells from different parts is joined by
a $1$-cell then $\Gamma$ is a \defterm{complete bipartite graph} or a
\defterm{biclique}.  It is a fact that a simplicial graph is bipartite
if and only if it has no cycles of odd length, where a cycle is a
closed path.  A \defterm{square complex} is a combinatorial $2$-complex
whose $2$-cells are squares, that is its $2$-cell boundaries are endowed
with the structure of $4$-cycles.

A \defterm{disc diagram} $D$ is a compact contractible subspace of the
$2$-sphere $\Sph^2$ with the structure of a combinatorial $2$-complex.  A
disc diagram $D$ is \defterm{nonsingular} if it is homeomorphic to a
closed $2$-cell and is otherwise \defterm{singular}.  The topological
boundary of $D$ is denoted $\bd D$.  The boundary $\bd D$ is always a
subgraph of the $1$-skeleton $\sk{1}{D}$ of $D$.  A disc diagram
$D \subsetneq \Sph^2$ induces the structure of a combinatorial
$2$-complex on $\Sph^2$ with $D$ a subcomplex and $\Sph^2 \setminus D$
an open $2$-cell.  The attaching map $\Sph^1 \to \bd D$ of this $2$-cell
can be made combinatorial by pulling back the cell structure of
$\bd D$.  This turns $\Sph^1$ into a cycle denoted $\pbd D$.  The
resulting combinatorial map $\pbd D \to \bd D$ is the
\defterm{boundary path} of $D$.

Let $X$ be a combinatorial complex.  If $\mathcal{C}$ is a type of
combinatorial complex then a $\mathcal{C}$ \defterm{in} $X$ is a
$\mathcal{C}$ along with a combinatorial map from $C$ to $X$.  When
$D$ is a disc diagram in $X$, we abuse notation by also referring to
the concatenation $\pbd D \to \bd D \to X$ as the \defterm{boundary
  path} of $D$.

\subsection{Conventions Followed in the Text}
\sseclabel{convent}

We use the following conventions throughout the text unless otherwise
stated.  Maps and complexes are combinatorial.  Complexes are
connected.  Simply connected means connected and having trivial
fundamental group.  Distances between vertices in graphs are always
measured by the standard graph metric.  The notation $|\cdot,\cdot|$
is used to denote distance.  For graphs we use the terms vertex and
edge in place of $0$-cell and $1$-cell.  For square complexes we use the
terms vertex, edge and square.  For more general $2$-complexes we use
$0$-cell, $1$-cell and $2$-cell.

\subsection*{Acknowledgements}

The author would like to thank Daniel T. Wise for many invaluable
discussions during this research.  The author would also like to thank
a reviewer of this paper whose extensive comments led to many
corrections and improvements.

\section{Quadric Complexes}
\seclabel{quadric}

We now define our main object of study, quadric complexes.
\Ssecref{npcdiscs} describes locally minimal disc diagrams in quadric
complexes and shows that they are $\CAT(0)$ square complexes.  In
\Ssecref{catzdiscs} we recall properties of such disc diagrams that
are needed throughout the rest of the text.  \Ssecref{fbridged}
characterizes the $1$-skeleta of quadric complexes as those graphs whose
every isometrically embedded cycle is a $4$-cycle.  By a theorem of
Bandelt \cite{Bandelt:1988} these graphs are precisely those known as
\newterm{hereditary modular graphs} in the metric graph theory
literature.  Finally, in \Ssecref{hanped} we state a general theorem
of Hanlon and Martinez-Pedroza that implies that finitely presented
subgroups of quadric groups are quadric and state another theorem of
theirs needed in the proof of the Invariant Biclique Theorem.

\begin{figure}
  \centering
  \begin{tikzpicture}[scale=0.85]
    \begin{scope}
      \node[vertex,label={above:$u_0$}] (u0) at (1, 2) {};
      \node[vertex,label={left:$u_1$}] (u1) at (1, 4/3) {};
      \node[vertex,label={left:$u_2$}] (u2) at (1, 2/3) {};
      \node[vertex,label={below:$u_3$}] (u3) at (1, 0) {};
      \draw[thick] (u0) -- (u1);
      \draw[thick] (u1) -- (u2);
      \draw[thick] (u2) -- (u3);
      \draw[thick,postaction={decorate},decoration={markings,mark=at
      position 1/2 with {\arrow{>},\node[label={left:$e_1$}]
        {};}}] (u3) to[out=180,in=270] (0,1) to[out=90,in=180] (u0);
      \draw[thick,postaction={decorate},decoration={markings,mark=at
      position 1/2 with {\arrow{>},\node[label={right:$e_2$}]
        {};}}] (u3) to[out=0,in=270] (2,1) to[out=90,in=0] (u0);
    \end{scope}

    \draw (3,1) -- (3.75,1) node[above] {$f$};
    \draw[->] (3.75,1) -- (4.5,1);
    
    \begin{scope}[xshift=5cm]
      \node[vertex,label={above:$f(u_0)$}] (fu0) at (1, 2) {};
      \node[vertex,label={left:$f(u_1)$}] (fu1) at (1, 4/3) {};
      \node[vertex,label={left:$f(u_2)$}] (fu2) at (1, 2/3) {};
      \node[vertex,label={below:$f(u_3)$}] (fu3) at (1, 0) {};
      \draw[thick] (fu0) -- (fu1);
      \draw[thick] (fu1) -- (fu2);
      \draw[thick] (fu2) -- (fu3);
      \draw[thick,postaction={decorate},decoration={markings,mark=at
      position 1/2 with {\arrow{>},\node[label={right:$e = f(e_i)$}]
        {};}}] (fu3) to[out=0,in=270] (2,1) to[out=90,in=0] (fu0);
    \end{scope}
  \end{tikzpicture}
  \caption{The fold map $f$.  Let $g \colon s_1 \to s_2$ be an
    isomorphism of combinatorial complexes where $s_1$ and $s_2$ are
    squares.  Let $P_1 \subset \bd s_1$ be a combinatorial path of
    length $3$.  The domain of the fold map is
    $s_1 \sqcup s_2 /{\sim}$ where $x \sim g(x)$ for $x \in P_1$.  The
    fold map is the quotient of $s_1 \sqcup s_2 /{\sim}$ identifying
    $[x]_{\sim}$ and $[g(x)]_{\sim}$ for all $x \in s_1$.}
  \figlabel{foldmap}
\end{figure}

\begin{figure}
  \begin{subfigure}{\textwidth}
    \centering
    \begin{tikzpicture}[scale=0.85]
      \begin{scope}
        \node[vertex] (u0) at (1, 2) {};
        \node[vertex] (u1) at (1, 0) {};
        \node[vertex] (v0) at (0, 1) {};
        \node[vertex] (v1) at (1, 1) {};
        \node[vertex] (v2) at (2, 1) {};

        \draw[thick,postaction={decorate},decoration={markings,mark=at
          position 1/2 with {\arrow{>},\node[label={left:$a$}]
            {};}}] (v0) -- (u0);
        \draw[thick] (v1) -- (u0);
        \draw[thick,postaction={decorate},decoration={markings,mark=at
          position 1/2 with {\arrow{>},\node[label={right:$b$}]
            {};}}] (v2) -- (u0);
        \draw[thick,postaction={decorate},decoration={markings,mark=at
          position 1/2 with {\arrow{>},\node[label={left:$c$}]
            {};}}] (u1) -- (v0);
        \draw[thick] (u1) -- (v1);
        \draw[thick,postaction={decorate},decoration={markings,mark=at
          position 1/2 with {\arrow{>},\node[label={right:$d$}]
            {};}}] (u1) -- (v2);
      \end{scope}

      \draw[-implies, double equal sign distance] (3,1) -- (4,1);
      
      \begin{scope}[xshift=5cm]
        \node[vertex] (u0) at (1, 2) {};
        \node[vertex] (u1) at (1, 0) {};
        \node[vertex] (v0) at (0, 1) {};
        \node[vertex] (v2) at (2, 1) {};

        \draw[thick,postaction={decorate},decoration={markings,mark=at
          position 1/2 with {\arrow{>},\node[label={left:$a$}]
            {};}}] (v0) -- (u0);
        \draw[thick,postaction={decorate},decoration={markings,mark=at
          position 1/2 with {\arrow{>},\node[label={right:$b$}]
            {};}}] (v2) -- (u0);
        \draw[thick,postaction={decorate},decoration={markings,mark=at
          position 1/2 with {\arrow{>},\node[label={left:$c$}]
            {};}}] (u1) -- (v0);
        \draw[thick,postaction={decorate},decoration={markings,mark=at
          position 1/2 with {\arrow{>},\node[label={right:$d$}]
            {};}}] (u1) -- (v2);
      \end{scope}
    \end{tikzpicture}
    \caption{}
    \figlabel{repl2}
  \end{subfigure}
  \\
  \begin{subfigure}{\textwidth}
    \centering
    \begin{tikzpicture}[scale=0.85]
      \begin{scope}
        \node[vertex] (u) at (0, 0) {};
        \node[vertex] (b0) at (30:1) {};
        \node[vertex] (a0) at (90:1) {};
        \node[vertex] (b2) at (150:1) {};
        \node[vertex] (a2) at (210:1) {};
        \node[vertex] (b1) at (270:1) {};
        \node[vertex] (a1) at (330:1) {};

        \foreach \v in {a0,a1,a2}
          \draw[thick] (\v) -- (u);

        \draw[thick,postaction={decorate},decoration={markings,mark=at
          position 1/2 with {\arrow{>},\node[label={above:$c$}]
            {};}}] (a0) -- (b0);
        \draw[thick,postaction={decorate},decoration={markings,mark=at
          position 1/2 with {\arrow{>},\node[label={right:$d$}]
            {};}}] (b0) -- (a1);
        \draw[thick,postaction={decorate},decoration={markings,mark=at
          position 1/2 with {\arrow{>},\node[label={below:$e$}]
            {};}}] (a1) -- (b1);
        \draw[thick,postaction={decorate},decoration={markings,mark=at
          position 1/2 with {\arrow{>},\node[label={below:$f$}]
            {};}}] (b1) -- (a2);
        \draw[thick,postaction={decorate},decoration={markings,mark=at
          position 1/2 with {\arrow{>},\node[label={left:$a$}]
            {};}}] (a2) -- (b2);
        \draw[thick,postaction={decorate},decoration={markings,mark=at
          position 1/2 with {\arrow{>},\node[label={above:$b$}]
            {};}}] (b2) -- (a0);
      \end{scope}

      \draw[-implies,double equal sign distance] (1.75,0) -- (2.75,0);
      
      \begin{scope}[xshift=4.5cm]
        \node[vertex] (b0) at (30:1) {};
        \node[vertex] (a0) at (90:1) {};
        \node[vertex] (b2) at (150:1) {};
        \node[vertex] (a2) at (210:1) {};
        \node[vertex] (b1) at (270:1) {};
        \node[vertex] (a1) at (330:1) {};

        \draw[thick,postaction={decorate},decoration={markings,mark=at
          position 1/2 with {\arrow{>},\node[label={above:$c$}]
            {};}}] (a0) -- (b0);
        \draw[thick,postaction={decorate},decoration={markings,mark=at
          position 1/2 with {\arrow{>},\node[label={right:$d$}]
            {};}}] (b0) -- (a1);
        \draw[thick,postaction={decorate},decoration={markings,mark=at
          position 1/2 with {\arrow{>},\node[label={below:$e$}]
            {};}}] (a1) -- (b1);
        \draw[thick,postaction={decorate},decoration={markings,mark=at
          position 1/2 with {\arrow{>},\node[label={below:$f$}]
            {};}}] (b1) -- (a2);
        \draw[thick,postaction={decorate},decoration={markings,mark=at
          position 1/2 with {\arrow{>},\node[label={left:$a$}]
            {};}}] (a2) -- (b2);
        \draw[thick,postaction={decorate},decoration={markings,mark=at
          position 1/2 with {\arrow{>},\node[label={above:$b$}]
            {};}}] (b2) -- (a0);

        \draw[thick] (b2) -- (a1);
      \end{scope}

      \node at (6.25,0) {,};

      \begin{scope}[xshift=8cm]
        \node[vertex] (b0) at (30:1) {};
        \node[vertex] (a0) at (90:1) {};
        \node[vertex] (b2) at (150:1) {};
        \node[vertex] (a2) at (210:1) {};
        \node[vertex] (b1) at (270:1) {};
        \node[vertex] (a1) at (330:1) {};

        \draw[thick,postaction={decorate},decoration={markings,mark=at
          position 1/2 with {\arrow{>},\node[label={above:$c$}]
            {};}}] (a0) -- (b0);
        \draw[thick,postaction={decorate},decoration={markings,mark=at
          position 1/2 with {\arrow{>},\node[label={right:$d$}]
            {};}}] (b0) -- (a1);
        \draw[thick,postaction={decorate},decoration={markings,mark=at
          position 1/2 with {\arrow{>},\node[label={below:$e$}]
            {};}}] (a1) -- (b1);
        \draw[thick,postaction={decorate},decoration={markings,mark=at
          position 1/2 with {\arrow{>},\node[label={below:$f$}]
            {};}}] (b1) -- (a2);
        \draw[thick,postaction={decorate},decoration={markings,mark=at
          position 1/2 with {\arrow{>},\node[label={left:$a$}]
            {};}}] (a2) -- (b2);
        \draw[thick,postaction={decorate},decoration={markings,mark=at
          position 1/2 with {\arrow{>},\node[label={above:$b$}]
            {};}}] (b2) -- (a0);

        \draw[thick] (a0) -- (b1);
      \end{scope}

      \node at (9.75,0) {,};

      \begin{scope}[xshift=11.5cm]
        \node[vertex] (b0) at (30:1) {};
        \node[vertex] (a0) at (90:1) {};
        \node[vertex] (b2) at (150:1) {};
        \node[vertex] (a2) at (210:1) {};
        \node[vertex] (b1) at (270:1) {};
        \node[vertex] (a1) at (330:1) {};

        \draw[thick,postaction={decorate},decoration={markings,mark=at
          position 1/2 with {\arrow{>},\node[label={above:$c$}]
            {};}}] (a0) -- (b0);
        \draw[thick,postaction={decorate},decoration={markings,mark=at
          position 1/2 with {\arrow{>},\node[label={right:$d$}]
            {};}}] (b0) -- (a1);
        \draw[thick,postaction={decorate},decoration={markings,mark=at
          position 1/2 with {\arrow{>},\node[label={below:$e$}]
            {};}}] (a1) -- (b1);
        \draw[thick,postaction={decorate},decoration={markings,mark=at
          position 1/2 with {\arrow{>},\node[label={below:$f$}]
            {};}}] (b1) -- (a2);
        \draw[thick,postaction={decorate},decoration={markings,mark=at
          position 1/2 with {\arrow{>},\node[label={left:$a$}]
            {};}}] (a2) -- (b2);
        \draw[thick,postaction={decorate},decoration={markings,mark=at
          position 1/2 with {\arrow{>},\node[label={above:$b$}]
            {};}}] (b2) -- (a0);

        \draw[thick] (a2) -- (b0);
      \end{scope}
    \end{tikzpicture}
    \caption{}
    \figlabel{repl3}
  \end{subfigure}
  \caption{Replacement rules for quadric complexes.}
  \figlabel{repl}
\end{figure}

\begin{defn}
  \defnlabel{quadric} A \defterm{locally quadric complex} is a square
  complex $X$ satisfying the following conditions.
  \begin{enumerate}
  \item \itmlabel{imm} The attaching map of every square is an immersion.
  \item \itmlabel{piec3} Any disc diagram in $X$ of the form of the
    domain of the fold map factors through the fold map.  The
    \defterm{fold map} is described in \Figref{foldmap}.
  \item \itmlabel{rep2} For any disc diagram in $X$ of the form of the
    left-hand side of \Figref{repl2} with immersed boundary, there is
    a disc diagram in $X$ of the form on the right with the same
    boundary path.
  \item \itmlabel{rep3} For any disc diagram in $X$ of the form of the
    left-hand side of \Figref{repl3} with immersed boundary, there is
    a disc diagram in $X$ of one of the forms on the right with the
    same boundary path.
  \end{enumerate}
  A \defterm{quadric complex} is a simply connected locally quadric
  complex.
\end{defn}

Condition~\defnitmref{quadric}{piec3} implies that no two squares have
the same attaching map.  Conditions \defnitmref{quadric}{rep2} and
\defnitmref{quadric}{rep3} are nonpositive curvature requirements
having important consequences for disc diagrams in locally quadric
complexes.

Quadric complexes are similar in nature to systolic complexes.  This
is especially apparent in the presentation given by Wise
\cite{Wise:2003}.  Wise also introduces ``generalized
$(p,q)$-complexes'' which encompass systolic complexes as a subclass
of generalized $(3,6)$-complexes and quadric complexes as a subclass
of generalized $(4,4)$-complexes \cite{Wise:2003}.

The following proposition follows immediately from \Defnref{quadric}.
\begin{prop}
  The class of locally quadric complexes is closed under the
  operations of taking full subcomplexes and taking covering spaces.
\end{prop}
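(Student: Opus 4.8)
The plan is to verify each of the four conditions of \Defnref{quadric} for $Y$ a full subcomplex and for $\ucov{X}$ a covering space, in both cases by transporting disc diagrams between the two complexes. The organising observations are: every diagram appearing in conditions \defnitmref{quadric}{piec3}, \defnitmref{quadric}{rep2} and \defnitmref{quadric}{rep3} is connected; the output diagrams (the codomain of the fold map in \Figref{foldmap} and the right-hand sides of \Figref{repl2} and \Figref{repl3}) are simply connected; and condition \defnitmref{quadric}{imm} is inherited by any combinatorial map whose post-composition with some map is an immersion, since local injectivity of a composite forces local injectivity of the first factor.

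First I would treat full subcomplexes. Let $Y \subseteq X$ be full, so that $Y$ contains every cell of $X$ all of whose vertices lie in $Y$. A disc diagram in $Y$ is in particular a disc diagram in $X$, and since $Y \hookrightarrow X$ is an injective combinatorial map a boundary path is immersed in $Y$ if and only if it is immersed in $X$; thus every hypothesis available in $Y$ is also a hypothesis in $X$. Condition \defnitmref{quadric}{imm} holds in $Y$ because its squares are squares of $X$. For \defnitmref{quadric}{piec3}, factoring through the fold map is a property of the image of the diagram in $X$, which lies in $Y$, so it transfers verbatim. For \defnitmref{quadric}{rep2} and \defnitmref{quadric}{rep3}, applying the condition in $X$ yields a replacement diagram in $X$ with the same boundary path; the only new cells are the diagonal edge of \Figref{repl3} and the replacement squares, and these have their endpoints, respectively their boundary $4$-cycles, on the common boundary, which lies in $Y$, so fullness places the entire replacement diagram in $Y$.

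Next I would treat a covering $p \colon \ucov{X} \to X$. Since $p$ is a local isomorphism, a diagram $f \colon D \to \ucov{X}$ has immersed boundary exactly when $p \circ f$ does, and $p \circ f$ is a diagram in $X$ of the same form; the strategy is therefore to push a diagram in $\ucov{X}$ down to $X$, apply the relevant condition, and lift the resulting output back. Condition \defnitmref{quadric}{imm} holds in $\ucov{X}$ because $p \circ \alpha$ is an immersion for each square's attaching map $\alpha$, forcing $\alpha$ to be one. For \defnitmref{quadric}{piec3}, the codomain of the fold map is simply connected, so the factorisation $p \circ f = \bar{g} \circ q$ obtained in $X$ lifts to a map $\bar{g}' \colon \text{cod} \to \ucov{X}$ chosen to agree with $f$ at one point; since $D$ is connected, uniqueness of lifts gives $f = \bar{g}' \circ q$, so $f$ factors through the fold map $q$. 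For \defnitmref{quadric}{rep2} and \defnitmref{quadric}{rep3}, the replacement diagram $g \colon D' \to X$ has simply connected domain and hence lifts to $\tilde{g} \colon D' \to \ucov{X}$; choosing the lift to agree with $f$ at a single boundary vertex yields the required replacement diagram upstairs once one checks its boundary is correct.

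The hard part will be exactly this last check in the covering case: the replacement diagram is produced only downstairs in $X$, and one must confirm that the boundary of its lift coincides with the boundary path of the original diagram in $\ucov{X}$. Here the point is that $\tilde{g}|_{\bd D'}$ and $f|_{\bd D}$ are two lifts of the common boundary path that agree at one vertex, and since the boundary cycle is \emph{connected}, uniqueness of lifts of maps from connected spaces (rather than simply connected ones) forces them to agree. Everything else is a formal transport of connected and simply connected diagrams through the inclusion or the covering map.
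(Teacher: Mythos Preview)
Your proof is correct and is precisely the detailed verification the paper omits: the paper simply asserts that the proposition ``follows immediately from \Defnref{quadric}'' and gives no further argument, so your approach is the same in spirit, just fully unpacked. Your handling of the lifting issues in the covering case (in particular, the observation that boundary paths agree by uniqueness of lifts from a connected domain, not merely a simply connected one) and of fullness in the subcomplex case (the diagonal edge of \Figref{repl3} has endpoints in $Y$, hence lies in $Y$, whence the two squares do as well) are exactly the points one must check, and you have them right.
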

A \newterm{full} subcomplex is one that includes any cell whose
boundary is in the subcomplex.

\begin{defn*}
  A group is \defterm{quadric} if it acts properly and cocompactly on
  a quadric complex.
\end{defn*}

If $X$ is a compact, connected locally quadric complex, then its
universal cover $\ucov{X}$ is quadric and so its fundamental group
$\pi_1(X)$ is quadric.

\subsection{Examples}
\seclabel{examples}

We now discuss a few classes of examples of quadric complexes and
groups.  It follows immediately from \Defnref{quadric} that $\CAT(0)$
square complexes are quadric.
\begin{ex}\exlabel{elsner}
  The following example of Elsner and Przytycki \cite{Elsner:2013} is
  of a nonpositively curved square complex whose fundamental group
  does not virtually act properly and cocompactly on a systolic
  complex \cite[Theorem~4.1]{Elsner:2013}.
  \[\gpres{a,b,c}{aba^{-1}b,c^{-1}ac=b}\]
  Hence, this group is quadric but not virtually systolic.  This shows
  that though the quadric and systolic theories share many
  similarities, they are nevertheless distinct.
\end{ex}

\begin{figure}
  \centering
  \begin{tikzpicture}[scale=1.5]
    \begin{scope}
      \node[vertex] (u0) at (1, 2) {};
      \node[vertex] (u1) at (1, 0) {};
      \node[vertex] (v0) at (0, 1) {};
      \node[vertex] (v1) at (1, 1) {};
      \node[vertex] (v2) at (2, 1) {};

      \draw[thick,postaction={decorate},decoration={markings,mark=at
        position 1/2 with {\arrow{>},\node[label={left:$\sigma(j)$}]
          {};}}] (v0) to[out=60,in=210] (u0);
      \draw[thick,postaction={decorate},decoration={markings,mark=at
        position 1/3 with {\arrow{>},\node[label={right:$\sigma(i)$}]
          {};}}] (v1) -- (u0);
      \draw[thick,postaction={decorate},decoration={markings,mark=at
        position 1/2 with {\arrow{>},\node[label={right:$\sigma(k)$}]
          {};}}] (v2) to[out=120,in=330] (u0);
      \draw[thick,postaction={decorate},decoration={markings,mark=at
        position 1/2 with {\arrow{>},\node[label={left:$j$}]
          {};}}] (u1) to[out=150,in=300] (v0);
      \draw[thick,postaction={decorate},decoration={markings,mark=at
        position 2/3 with {\arrow{>},\node[label={right:$i$}]
          {};}}] (u1) -- (v1);
      \draw[thick,postaction={decorate},decoration={markings,mark=at
        position 1/2 with {\arrow{>},\node[label={right:$k$}]
          {};}}] (u1) to[out=30,in=240] (v2);
    \end{scope}
  \end{tikzpicture}
  \caption{A disc diagram in the presentation of \Exref{permpres}.}
  \figlabel{exfig}
\end{figure}

\begin{ex}\exlabel{permpres}
  Let $n$ be a positive integer.  let $\sigma$ be a permutation of
  $\{1, 2, \ldots, n\}$.  The presentation
  \[ \gpres{g_1, g_2, \ldots,
      g_n}{\text{$g_ig_{\sigma(i)}=g_jg_{\sigma(j)}$, for all
        $i \neq j$}} \] is locally quadric and so presents a quadric
  group.  Indeed, no disc diagram of the form on the left-hand side of
  \Figref{repl3} has immersed boundary and any disc diagram of the
  form on the left-hand side of \Figref{repl2} with immersed boundary
  is as in \Figref{exfig} and so has boundary which bounds a square.
  Setting $n=3$ and letting $\sigma$ be the permutation $(1\; 2\; 3)$
  we obtain a presentation of the braid group $B_3$ on three strands.
\end{ex}

\begin{ex}\exlabel{huang}
  By \Corref{cftfpresquad}, $\cftf$ small cancellation groups are
  quadric.  In particular, the Artin group
  \[\gpres{a,b,c}{ab=ba, bcb=cbc}\]
  is quadric.  However, by Huang, Jankiewicz and Przytycki
  \cite[Theorem~1.2]{Huang:2016} or, independently, Haettel
  \cite[Theorem~A]{Haettel:2015}, this group does not virtually act
  properly and cocompactly on a $\CAT(0)$ cube complex.  In general,
  Artin groups whose defining graphs are triangle free are $\cftf$
  (see Pride \cite{Pride:1986}) and many such Artin groups are not
  virtually cocompactly cubulated \cite{Haettel:2015,
    Huang:2016}.\footnote{Thanks to Jingyin Huang for bringing this
    class of examples to the author's attention.}
\end{ex}

\subsection{Nonpositive Curvature of Disc Diagrams}
\sseclabel{npcdiscs}

The nonpositive curvature conditions \defnitmref{quadric}{rep2} and
\defnitmref{quadric}{rep3} in \Defnref{quadric} imply that any
nullhomotopic closed path in a locally quadric complex bounds a
nonpositively curved disc diagram.

A disc diagram $D$ in a $2$-complex $X$ has \defterm{minimal area} if it
contains the minimal number of $2$-cells over all disc diagrams in $X$
having the same boundary path.  A disc diagram $D$ in a quadric
complex is \defterm{locally minimal} if every internal vertex of $D$
is incident to at least four squares.

\begin{lem}
  \lemlabel{minardd} Let $X$ be a locally quadric complex and $D$ a
  minimal area disc diagram in $X$.  Then $D$ is locally minimal.
\end{lem}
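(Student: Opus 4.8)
The plan is to argue by contradiction. Suppose $D$ is a minimal area disc diagram in the locally quadric complex $X$ that fails to be locally minimal, so that some internal vertex $v$ of $D$ is incident to at most three squares. The goal is to modify $D$ only in a small neighbourhood of $v$ so as to produce a disc diagram in $X$ with the same boundary path but strictly fewer squares, contradicting minimal area.

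First I would pin down the local picture at $v$. Since $v$ is internal, its link in $D$ is a single cycle, so the squares incident to $v$ can be cyclically ordered $s_0, \dots, s_{k-1}$ with consecutive squares (indices mod $k$) sharing an edge of $D$ incident to $v$. Condition \defnitmref{quadric}{imm} ensures that each $s_i$ contributes a genuine corner at $v$ (its two edges at $v$ are distinct), so $k \geq 2$; hence $k \in \{2,3\}$. Let $E$ be the closed star of $v$, the union of the closed squares $s_0, \dots, s_{k-1}$. As $v$ is an interior point, $E$ is a nonsingular disc subdiagram of $D$, and combinatorially it has exactly the shape of the left-hand side of \Figref{repl2} when $k=2$ (two squares glued along the length-$2$ path formed by the two edges at $v$) or of the left-hand side of \Figref{repl3} when $k=3$ (three squares arranged in a hexagon about $v$).

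The essential step is to verify that the boundary path of $E$ is an immersion, which is precisely the hypothesis required to invoke the replacement rules. I would check that a backtrack in $\bd E$ can only occur at a vertex shared by two squares $s_i, s_{i+1}$ adjacent around $v$, and forces them to share, besides their common edge at $v$, also an outer edge of $X$. Tracing the two boundary $4$-cycles then shows, using \defnitmref{quadric}{imm} and the fact (following from \defnitmref{quadric}{piec3}) that distinct squares have distinct attaching maps, that $s_i$ and $s_{i+1}$ either map to the same square of $X$ with opposite orientations---a cancelable pair---or bound a strictly smaller configuration to which a replacement rule applies. In either case one strictly decreases the number of squares while preserving the boundary path, contradicting minimality; hence $\bd E$ is immersed. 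I expect this case analysis---the careful bookkeeping of the possible coincidences among the outer corners of the $s_i$---to be the main obstacle.

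With $\bd E$ immersed I would finish by applying \defnitmref{quadric}{rep2} (when $k=2$) or \defnitmref{quadric}{rep3} (when $k=3$) to obtain a disc diagram $E'$ in $X$ with the same boundary path as $E$ but strictly fewer squares (one in place of two, or two in place of three). Excising the interior of $E$ from $D$ and gluing in $E'$ along the shared boundary path yields a disc diagram $D'$ in $X$ with the same boundary path as $D$ and strictly smaller area. This contradicts the minimality of $D$, so every internal vertex of $D$ is incident to at least four squares; that is, $D$ is locally minimal.
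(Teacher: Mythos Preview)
Your proposal is correct and follows essentially the same approach as the paper: argue by contradiction, take the closed star $E$ of a bad internal vertex, and apply the replacement rules \defnitmref{quadric}{rep2} or \defnitmref{quadric}{rep3} once the boundary path of $E$ is known to immerse in $X$. The only tactical difference is in the non-immersed case, where the paper cuts $D$ along the backtracking length-$2$ subpath of $\bd E$ and reglues so as to manufacture a subdisc of the fold-map form \defnitmref{quadric}{piec3} (for $k=2$) or an internal vertex incident to only two squares (for $k=3$), whereas you propose to trace the forced edge coincidences directly; these amount to the same reduction.
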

\begin{proof}
  Suppose $v$ is an internal vertex of $D$ incident to $k < 4$
  squares.  By Condition~\defnitmref{quadric}{imm}, $k \ge 2$.  Let
  $D'$ be the union of closed $2$-cells incident to $v$.
  
  Suppose first that $k=2$.  Then $D'$ either has the form of the disc
  diagram in Condition~\defnitmref{quadric}{piec3} or that of the disc
  diagram in Condition~\defnitmref{quadric}{rep2}.  In the former case
  the exterior edges of the two squares in $D'$ map to the same edge
  of $X$ and with the same orientation so that $D$ can be replaced by
  a disc diagram obtained by cutting out $D'$ and gluing together
  these exterior edges.  In the latter case $D'$ can be replaced by a
  single square if its boundary immerses.  If the boundary of $D'$
  does not immerse then we may cut $D$ along a nonimmersing path of
  length $2$ in $\bd D'$ and glue it back together in such a way as to
  introduce a subdisc of the form in
  Condition~\defnitmref{quadric}{piec3}.  In any case we contradict
  the minimality of the area of $D$.
  
  Suppose now that $k=3$.  By the $k=2$ case we may assume that every
  vertex of $D$ is incident to at least three squares.  Then $D'$ has
  the form of the disc diagram in
  Condition~\defnitmref{quadric}{rep3}.  If the $\bd D'$ immerses then
  $D'$ can be replaced by a pair of squares.  If $\bd D'$ does not
  immerse then we may cut $D$ along a nonimmersing path of length $2$
  in $\bd D'$ and glue it back together in such a way as to introduce
  a vertex incident to two squares.  In any case we again contradict
  the minimality of the area of $D$.
\end{proof}

We see from the proof of \Lemref{minardd} that, given a disc diagram
$D$ in a locally quadric complex, we can obtain a locally minimal disc
diagram with the same boundary path by performing a finite number of
replacements.  Each replacement reduces the number of squares, though
the locally minimal disc diagram we ultimately obtain may not be of
minimal area.

\subsubsection{\texorpdfstring{$\CAT(0)$}{CAT(0)} Disc Diagrams}
\ssseclabel{catzdiscs}

A \defterm{$\CAT(0)$ cube complex} is a cube complex for which the
metric obtained by making each cube isometric to a standard Euclidean
cube satisfies a metric nonpositive curvature condition concerning the
thinness of its triangles.  Such complexes have been studied
extensively in the geometric group theory literature
\cite{Sageev:1995, Wise:2011, Wise:2012, Niblo:1998, Haglund:2008,
  Niblo:1997} as well as in the metric graph theory literature, via
their $1$-skeleta \cite{Roller:1998, Gerasimov:1998, Chepoi:2000},
\newterm{median graphs} \cite{Avann:1961, Nebesky:1971, Klavzar:1999}.

In this paper we make use of a purely combinatorial characterization
of the $\CAT(0)$ condition for cube complexes which is due to Gromov
\cite{Gromov:1987}.  In dimension $2$ this characterization, which we
may take as a definition, is as follows.  A square complex $X$ is
\defterm{$\CAT(0)$} if and only if $X$ is simply connected and the
shortest embedded cycle in the link of any $0$-cell of $X$ has length
at least $4$.  The \defterm{link} of a $0$-cell $v$ of $X$ is the
graph whose vertices correspond to ends of $1$-cells of $X$ incident
to $v$ and whose edges correspond to corners of $2$-cells of $X$
incident to $v$.  In the case where $X$ is a disc diagram, the
$\CAT(0)$ condition is equivalent to the condition that each interior
vertex of $X$ is incident to at least four squares.

By their definition, locally minimal disc diagrams in locally quadric
complexes are \newterm{$\CAT(0)$ square complexes}.  In particular, by
\Lemref{minardd}, minimal area disc diagrams in locally quadric
complexes are $\CAT(0)$.  We refer to disc diagrams that are $\CAT(0)$
square complexes as \defterm{$\CAT(0)$ disc diagrams} for brevity.
Such disc diagrams are amenable to standard arguments using a
combinatorial version of the Gauss-Bonnet Theorem as well as to
well-known dual curve constructions.  These and other well-known
properties of $\CAT(0)$ disc diagrams will be recalled in the rest of
this section.  Most of these results can be found in Wise's study of
minimal area cubical disc diagrams \cite{Wise:2011}.

Let $v$ be a vertex of a disc diagram $D$ with square $2$-cells.  Let
$\delta(v)$ denote the valence of $v$ and $\rho(v)$ the number of
corners of squares incident to $v$.  The \defterm{curvature} of $v$ is
\[ \kappa(v) = 2\pi - \delta(v)\pi + \frac{\rho(v)}{2}\pi. \] This
notion of curvature may be considered as emerging from the metric
obtained on $D$ by treating each square as a standard Euclidean
square.  Intuitively, this forces all of the curvature of $D$ to be
concentrated at its vertices.

\begin{lem}
  \lemlabel{pospos} Let $D$ be a disc diagram with square $2$-cells.
  Assume that $D$ is not a single vertex and let $v$ be a vertex of
  $\bd D$.  If $\kappa(v) > 0$ then $v$ is not a cutpoint of $D$ and
  $\kappa(v)$ is either $\pi$ or $\frac{\pi}{2}$.  If
  $\kappa(v) = \pi$ then $v$ has valence $1$ and is not incident to
  any squares.  If $\kappa(v) = \frac{\pi}{2}$ then $v$ has valence
  $2$ and is incident to a single corner of a square.  If
  $\kappa(v) = 0$ then either $v$ is a cutpoint, has valence $2$ and
  is not incident to any corners of squares or $v$ is not a cutpoint,
  has valence $3$ and is incident to $2$ corners of squares.
\end{lem}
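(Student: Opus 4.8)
The plan is to express $\kappa(v)$ through two local quantities and then to enumerate the finitely many configurations compatible with $\kappa(v) \ge 0$. First I would record the local combinatorics at $v$. Since $D \subsetneq \Sph^2$, the complement $\Sph^2 \setminus D$ is a single open $2$-cell, and the link of $v$ in $\Sph^2$ is a circle carrying $\delta(v)$ marked points, one per end of an edge at $v$. These points cut the circle into $\delta(v)$ arcs, and each arc is filled either by a single corner of a square at $v$ (there are $\rho(v)$ of these, the immersion hypothesis \defnitmref{quadric}{imm} ensuring each incident square meets $v$ in well-separated corners) or by a \emph{gap}, a local piece of the $2$-cell $\Sph^2 \setminus D$. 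Writing $g(v)$ for the number of gaps, this partition yields the identity $\rho(v) + g(v) = \delta(v)$.

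Next I would interpret $g(v)$. As $v \in \bd D$ there is at least one gap, so $1 \le g(v) \le \delta(v)$ and $\rho(v) \ge 0$. The gaps separate the link of $v$ in $D$ into exactly $g(v)$ arcs, so $g(v)$ equals the number of connected components of this link; equivalently, the boundary path of $D$ runs through $v$ exactly $g(v)$ times. The structural input I would invoke is the standard planar topology of disc diagrams in $\Sph^2$: a vertex is a cutpoint precisely when its link is disconnected, that is, precisely when $g(v) \ge 2$.

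Substituting $\rho(v) = \delta(v) - g(v)$ into the definition of curvature then collapses everything to
\[ \kappa(v) = \frac{\pi}{2}\bigl(4 - \delta(v) - g(v)\bigr), \]
so the lemma reduces to solving $\delta(v) + g(v) \le 4$ under $1 \le g(v) \le \delta(v)$. If $\kappa(v) > 0$ then $\delta(v) + g(v) \le 3$, leaving only $(\delta, g) = (1,1)$, giving $\kappa = \pi$ with valence $1$ and no incident square, or $(\delta, g) = (2,1)$, giving $\kappa = \tfrac{\pi}{2}$ with valence $2$ and a single corner; in both cases $g = 1$, so $v$ is not a cutpoint. If $\kappa(v) = 0$ then $\delta(v) + g(v) = 4$, leaving only $(\delta, g) = (3,1)$ (valence $3$, two corners, not a cutpoint) or $(\delta, g) = (2,2)$ (valence $2$, no corners, a cutpoint). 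These are exactly the asserted alternatives.

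The final enumeration is routine; the two points demanding care are the counting identity $\rho(v) + g(v) = \delta(v)$ and the cutpoint criterion $g(v) \ge 2$. The former is immediate from the planar picture of $D$ near $v$, while the latter --- distinguishing local disconnection of the link from genuine global separation of $D$ --- is where I expect the real work, and where I would lean on the standard topology of disc diagrams embedded in the sphere.
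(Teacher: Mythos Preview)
Your argument is correct and follows essentially the paper's approach: your gap count $g(v)$ coincides with the number $n$ of components of $D\setminus\{v\}$ that the paper works with, and your uniform formula $\kappa(v)=\tfrac{\pi}{2}\bigl(4-\delta(v)-g(v)\bigr)$ is exactly what the paper's two-case analysis computes (non-cutpoint: $\delta=\rho+1$; cutpoint: decompose into closures $D_i$ of the components and use $\kappa(v)=2\pi+\sum_i\bigl(\kappa_{D_i}(v)-2\pi\bigr)$). One minor correction: the lemma concerns an arbitrary square disc diagram, not one mapping to a locally quadric complex, so the immersion condition \defnitmref{quadric}{imm} is not available here---but you do not actually need it, since $\rho(v)$ already counts \emph{corners} rather than squares, and each corner at $v$ fills exactly one arc of the link circle in $\Sph^2$ regardless of how many corners of a given square land at $v$.
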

\begin{proof}
  We first consider the case where $v$ is not a cut point of $D$.  If
  $v$ is not a cutpoint then $\delta(v) = \rho(v) + 1$ and so
  \[ \kappa(v) = 2\pi - \bigl(\rho(v) + 1\bigr)\pi +
    \frac{\rho(v)}{2}\pi = \pi - \frac{\rho(v)}{2}\pi \] which can
  only take the nonnegative values $\pi$ and $\frac{\pi}{2}$ and $0$.
  If $\kappa(v) = \pi$ then $\rho(v) = 0$ and so $\delta(v) = 1$.  If
  $\kappa(v) = \frac{\pi}{2}$ then $\rho(v) = 1$ and so
  $\delta(v) = 2$.  If $\kappa(v) = 0$ then $\rho(v) = 2$ and so
  $\delta(v) = 3$.
  
  If $v$ is a cut point of $D$ then consider the disc diagrams
  $(D_i)_{i=1}^n$ obtained as closures of the components of
  $D \setminus \{v\}$.  Then, by the preceding paragraph, we have
  \[ \kappa(v) = 2\pi + \sum_{i=1}^n\bigl(\kappa_{D_i}(v) - 2\pi\bigr)
    \le 2\pi - n\pi = (2-n)\pi \le 0 \] where $\kappa_{D_i}(v)$ is the
  curvature of $v$ in $D_i$.  We have equality if and only if
  $\kappa_{D_i}(v) = \pi$, for each $i$, and $n = 2$.  So, by the
  preceding paragraph, we have equality if and only if $v$ is not
  incident to any corners of squares and has valence $2$ in $D$.
\end{proof}

\begin{prop}[Gauss-Bonnet Theorem for $\CAT(0)$ Disc Diagrams]
  \proplabel{gbcat} Let $D$ be a disc diagram with square $2$-cells and
  assume that $D$ is not a single vertex.  The sum of the curvatures
  of vertices of $D$ is $2\pi$, i.e.,
  \[ \sum_{\text{$v$ vertex}} \kappa(v) = 2\pi.\]
\end{prop}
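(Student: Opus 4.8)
The plan is to reduce the claimed identity to Euler's formula for $D$ together with two elementary double-counting arguments. Since a disc diagram is by definition a compact contractible subspace of $\Sph^2$, its Euler characteristic is $\chi(D) = 1$. Writing $V$, $E$ and $F$ for the numbers of vertices, edges and squares of $D$ respectively, this reads
\[ V - E + F = 1. \]

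First I would expand the sum of curvatures directly from the definition of $\kappa$:
\[ \sum_{\text{$v$ vertex}} \kappa(v) = 2\pi V - \pi\sum_{v} \delta(v) + \frac{\pi}{2}\sum_{v} \rho(v). \]
The two remaining sums count incidences, and I would evaluate each by counting instead from the side of the cells. Every edge has exactly two ends, each incident to a vertex, so the handshake identity gives $\sum_{v} \delta(v) = 2E$. Every square has exactly four corners, one at each vertex of its boundary $4$-cycle, so $\sum_{v} \rho(v) = 4F$. Substituting these and applying Euler's formula yields
\[ \sum_{\text{$v$ vertex}} \kappa(v) = 2\pi V - 2\pi E + 2\pi F = 2\pi(V - E + F) = 2\pi, \]
as required.

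There is no serious obstacle here; this is the standard combinatorial Gauss--Bonnet computation. The only point deserving care is that $D$ may be a singular disc diagram, and hence not homeomorphic to a closed disc. This does not affect the argument: contractibility, and therefore $\chi(D) = 1$, is built into the definition of a disc diagram regardless of singularity, while the two incidence counts depend only on local cell data and are insensitive to the global topology of $D$. The hypothesis that $D$ is not a single vertex is not strictly needed for the displayed identity — it holds in that degenerate case as well, since then $V=1$ and $E=F=0$ — but it is harmless to assume and matches the setting of \Lemref{pospos}.
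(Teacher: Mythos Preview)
Your proof is correct and is essentially the same as the paper's: both reduce the identity to $\chi(D)=1$ via the two incidence counts $\sum_v \delta(v)=2E$ and $\sum_v \rho(v)=4F$. The paper merely phrases this as redistributing the Euler-characteristic contributions of edges and squares to vertices, which amounts to the same computation you wrote out explicitly.
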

\begin{proof}
  The Euler characteristic $\chi(D)$ of $D$ can be computed by
  subtracting the number of its edges from the number of its vertices
  and squares.  That is, each edge contributes $-1$ to the Euler
  characteristic and each vertex or square contributes $1$.  Evenly
  distributing the $-1$ of each edge to its vertices and the $1$ from
  each square to the vertices on its boundary gives the sum
  \[ \sum_{\text{$v$ vertex}} \frac{\kappa(v)}{2\pi}, \]
  which then must equal $\chi(D) = 1$.
\end{proof}

\begin{cor}[Greendlinger's Lemma for $\CAT(0)$ Disc Diagrams]
  \corlabel{greendlingercat} Let $D$ be a $\CAT(0)$ disc diagram and
  assume that $D$ is not a single vertex.  Then there are at least two
  positively curved vertices on the boundary $\bd D$ of $D$.  If $D$
  has no valence $1$ vertices then there are at least four vertices on
  $\bd D$ with curvature $\frac{\pi}{2}$.
\end{cor}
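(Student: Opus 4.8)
The plan is to combine the Gauss--Bonnet Theorem (\Propref{gbcat}) with the local curvature analysis of \Lemref{pospos}, using the defining feature of $\CAT(0)$ disc diagrams that every interior vertex is incident to at least four squares. First I would record that an interior vertex $v$ has a cycle for its link, so $\delta(v) = \rho(v)$ and hence $\kappa(v) = 2\pi - \frac{\rho(v)}{2}\pi$; since $\rho(v) \ge 4$ by the $\CAT(0)$ condition, every interior vertex satisfies $\kappa(v) \le 0$. Consequently all of the positive curvature of $D$, whose total is $2\pi$ by \Propref{gbcat}, is concentrated on $\bd D$, and by \Lemref{pospos} each positively curved boundary vertex contributes exactly $\pi$ or $\frac{\pi}{2}$.

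For the first assertion I would argue by contradiction: if $D$ had at most one positively curved vertex, then since that vertex contributes at most $\pi$ and every other vertex contributes at most $0$, the total curvature would be at most $\pi < 2\pi$, contradicting \Propref{gbcat}. Hence there are at least two positively curved vertices, and by the previous paragraph they lie on $\bd D$. For the second assertion, I would observe that when $D$ has no valence $1$ vertex, \Lemref{pospos} forbids curvature $\pi$ (which would force valence $1$), so every positively curved vertex has curvature exactly $\frac{\pi}{2}$. Writing $m$ for the number of such vertices and again bounding every remaining vertex by $0$, the identity $2\pi = \sum_v \kappa(v) \le m \cdot \frac{\pi}{2}$ yields $m \ge 4$.

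I do not expect a serious obstacle here, since the corollary is essentially a counting consequence of the two preceding results. The only points requiring care are verifying that the $\CAT(0)$ hypothesis genuinely forces interior vertices to be nonpositively curved---so that positive curvature can appear only on $\bd D$---and ensuring the curvature inequalities are summed in the correct direction. Once these are in place, both bounds follow immediately from the total curvature being exactly $2\pi$.
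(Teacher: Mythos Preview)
Your proposal is correct and follows essentially the same argument as the paper: use the $\CAT(0)$ condition to force interior vertices to have nonpositive curvature, then combine \Propref{gbcat} with the curvature bounds of \Lemref{pospos} to count positively curved boundary vertices. The only difference is cosmetic---you spell out the computation $\delta(v)=\rho(v)$ for interior vertices, whereas the paper simply asserts that the $\CAT(0)$ property makes them nonpositively curved.
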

\begin{proof}
  The $\CAT(0)$ property implies that no interior vertex of $D$ is
  positively curved.  Then, by \Propref{gbcat}, there must be at least
  $2\pi$ positive curvature on $\bd D$.  By \Lemref{pospos}, the
  curvature of vertices is bounded above by $\pi$ and so there must be
  at least two positively curved vertices of $\bd D$.
  
  If $D$ has no valence $1$ vertices then, by \Lemref{pospos}, the
  curvature of its vertices is bounded above by $\frac{\pi}{2}$ and
  this is the least positive curvature of a vertex of $D$.  Hence
  $\bd D$ must have at least four vertices of $\frac{\pi}{2}$
  curvature.
\end{proof}

\begin{defn*}
  Let $D$ be a disc diagram with square $2$-cells.  The
  \defterm{midcube} of an edge $e$ of $D$ is the midpoint of $e$.  The
  \defterm{midcube} of a square of $D$ is the closed line segment
  joining the midcubes of a pair of its opposing edges.  A
  \defterm{dual curve} $\alpha$ of $D$ is a minimal subspace of $D$
  satisfying the following conditions.
  \begin{enumerate}
  \item Some midcube of $D$ is contained in $\alpha$.
  \item If $\alpha$ intersects a midcube $\mu$ of $D$ nontrivially
    then it contains $\mu$.
  \end{enumerate}
\end{defn*}

The squares of $D$ are in one-to-one correspondence with the
intersections of its dual curves.

Let $D$ be a disc diagram with square $2$-cells.  A \defterm{nonogon} of
$D$ is a dual curve of $D$ homeomorphic to the circle $S^1$.  A
self-intersecting dual curve of $D$ forms a \defterm{monogon} of $D$.
A pair of dual curves of $D$ that intersect twice form a
\defterm{bigon} of $D$.  Finally, three pairwise intersecting dual
curves form a \defterm{triangle} of $D$.

\begin{prop}
  \proplabel{nonogon} If $D$ is a $\CAT(0)$ disc diagram then it has
  no nonogons, monogons, bigons or triangles.
\end{prop}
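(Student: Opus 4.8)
The plan is to work in the Euclidean metric that $D$ carries when each square is made a unit Euclidean square, and to exploit two elementary facts about dual curves in this metric: every dual curve is a local geodesic, and any two crossing dual curves (including a curve crossing itself) meet orthogonally. Both follow directly from the definition of a midcube. A midcube joins the midpoints of two opposite edges of a square, meeting each of those edges perpendicularly at its midpoint; hence two consecutive midcubes of a dual curve, sharing the midpoint of a common edge, concatenate into a straight segment, while the two midcubes of a single square are perpendicular. Consequently a dual curve bends only where it crosses itself, and every crossing is a transverse right angle.

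Now suppose for contradiction that $D$ contains a nonogon, monogon, bigon or triangle. Each of these determines a closed curve assembled from dual-curve arcs that meet at crossing points: a nonogon is already such a closed curve and has no corners; the loop of a self-crossing dual curve (a monogon) has a single corner; a bigon is bounded by two arcs meeting at two crossings; and a triangle is bounded by three arcs meeting at three crossings. In every case this closed curve bounds a subdisc $E \subseteq D$ whose number of corners $c$ --- the crossing points along its boundary --- satisfies $c \le 3$. Among all configurations of these four types I pick one whose bounded disc $E$ has minimal area; minimality forces the bounding arcs to be embedded and to meet only at the corners, and forces $E$ to lie in the interior of $D$, so that every vertex of $D$ inside $E$ is an interior vertex.

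I then apply the Gauss--Bonnet bookkeeping of \Propref{gbcat}, now in its metric guise, to $\bar E$ viewed as a flat cone disc with piecewise-geodesic boundary. Because the bounding arcs are geodesics, the boundary $\bd E$ contributes curvature only at its $c$ corners, and each corner, being an orthogonal crossing, has exterior angle exactly $\frac{\pi}{2}$. The interior cone points of $\bar E$ are exactly the vertices of $D$ lying in $E$; at such a vertex the concentrated curvature is its combinatorial curvature $\kappa(v)$, which is nonpositive because $v$ is incident to at least four squares, as in the proof of \Corref{greendlingercat}. Crucially, no vertex of $D$ lies on $\bd E$, since dual curves pass only through edge midpoints and square centres, so $\bd E$ carries no positive boundary curvature of its own. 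Hence Gauss--Bonnet for the disc $\bar E$ gives
\[ c\cdot\frac{\pi}{2} \;=\; 2\pi - \sum_{v \in E} \kappa(v) \;\ge\; 2\pi, \]
and therefore $c \ge 4$, contradicting $c \le 3$.

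The delicate point, and the step I expect to be the main obstacle, is the clean extraction of the subdisc $E$: the argument collapses if $\bd D$ intrudes into $E$ (it could then contribute a positively curved boundary vertex of $D$) or if an unaccounted crossing adds a corner. This is precisely what the minimal-area choice is designed to prevent, by a standard innermost-disc and cut-and-paste argument --- any such intrusion would yield a configuration of one of the four types bounding a strictly smaller disc. The remaining verifications are routine: that the four configurations produce exactly $0,1,2,3$ corners, and that an interior vertex of a $\CAT(0)$ disc diagram, being incident to at least four squares, has $\kappa(v)\le 0$.
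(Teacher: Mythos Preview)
Your argument is correct and is a genuine variant of the paper's. Both proofs derive the same inequality $c\ge 4$ from Gauss--Bonnet and the nonpositivity of curvature at interior vertices, but they apply Gauss--Bonnet to different objects. You work metrically: you take the literal region $E\subset D$ enclosed by the dual-curve polygon, note that its boundary is piecewise geodesic with right-angle corners and carries no vertices of $D$, and invoke the metric Gauss--Bonnet theorem for flat discs with cone points. The paper stays combinatorial: it strips off the carrier $N$ of the bounding dual-curve arcs, applies the combinatorial Gauss--Bonnet formula of \Propref{gbcat} to the cellular subdisc $D'$ strictly inside, and recovers the corner count via the identity $k=\sum_{v\in\bd D'}\bigl(\rho_N(v)-2\bigr)$ together with the estimate $\rho_N(v)-2\ge\frac{2}{\pi}\kappa'(v)$. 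Your route is more geometrically transparent and avoids the auxiliary bookkeeping with $\rho_N$, at the cost of importing the metric Gauss--Bonnet theorem; the paper's route remains entirely inside the combinatorial framework it has already set up, and in particular uses only \Propref{gbcat}. One small remark: your claim that minimality forces $E$ to lie in the interior of $D$ is not quite the right justification---$E$ lies in the interior automatically, since the dual-curve arcs bounding it pass only through edge-midpoints and square centres and hence avoid $\bd D$; minimality is what you need for the embeddedness of the bounding arcs and the absence of extra corners.
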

\begin{proof}
  If $D$ has a nonogon, monogon, bigon or triangle, then it has a dual
  curve polygon with $k = 0$, $1$, $2$, or $3$ corners.  Let $N$ be
  the union of the open cells intersecting the boundary of this
  polygon.  Let $D'$ be the interior component of $D \setminus N$ and
  let $D''$ be the closure of $D' \cup N$.  Note that $D'$ is not a
  single vertex $v$ since then $v$ would be incident to exactly $k$
  squares and this would contradict the $\CAT(0)$ condition.

  For $v \in \bd D'$ let $\rho_N(v)$ be the number of open squares of
  $N$ whose closures contain $v$, let $\rho'(v)$ be the number of
  squares of $D'$ incident to $v$, let $\delta'(v)$ be the valence of
  $v$ in $D'$ and let $\kappa'(v)$ be the curvature of $v$ in $D'$.
  By the $\CAT(0)$ property we have $\rho_N(v) + \rho'(v) \ge 4$ and
  so
  \[ \rho_N(v) - 2 \ge 2 - \rho'(v) = 4 - 2\bigl(\rho'(v) + 1\bigr) +
    \rho'(v) \ge 4 - 2\delta'(v) + \rho'(v) =
    \frac{2}{\pi}\kappa'(v) \] where the final inequality relies on
  the fact that $v \in \bd D'$.  But notice that
  \[ k = \sum_{v \in \bd D'} \bigl(\rho_N(v) - 2 \bigr) \] while the proof
  of \Corref{greendlingercat} tells us that
  \[ \sum_{v \in \bd D'} \kappa'(v) \ge 2\pi \] and so we have the
  following contradiction.
  \[ k = \sum_{v \in \bd D'} \bigl(\rho_N(v) - 2 \bigr) \ge
    \frac{2}{\pi}\sum_{v \in \bd D'} \kappa'(v) \ge 4 \]
\end{proof}

Note that the ``no nonogons'' part of \Propref{nonogon} implies that
if $D$ is $\CAT(0)$ and has boundary path length $|\pbd D| = n$ then
it has exactly $\frac{n}{2}$ dual curves.

The technique of eliminating $n$-gons of dual curves for low $n$ of
disc diagrams in square complexes comes from unpublished lecture notes
of Casson and has been developed in the context of $\CAT(0)$ cube
complexes \cite{Sageev:1995, Wise:2012}.

The absence of triangles in $\CAT(0)$ disc diagrams distinguishes them
from general disc diagrams in $\CAT(0)$ cube complexes of dimension at
least $3$.  Such disc diagrams may have triangles, as they contain
internal vertices of valence $3$, though they do not have nonogons,
monogons or bigons.

\begin{lem}
  \lemlabel{protorail} Let $D$ be a disc diagram with square
  $2$-cells. Let $\zeta$ be a subpath of nonzero length of $\bd
  D$. Suppose the initial and terminal vertices of $\zeta$ each have
  curvature $\frac{\pi}{2}$ and that each internal vertex of $\zeta$
  has curvature $0$.  Then the inclusion of $\zeta$ extends to a map
  $f \colon [0,1] \times [0,n] \to D$ from a $1 \times n$ grid of
  squares satisfying the following conditions.
  \begin{enumerate}
  \item The restriction of $f$ to $\{1\} \times [0,n]$ is an
    isomorphism onto $\zeta$.
  \item $f$ embeds
    $[0,1] \times \{0\} \cup \{1\} \times [0,n] \cup [0,1] \times
    \{n\}$ into $\bd D$.
  \item The restriction of $f$ to
    $\bigl\{\frac{1}{2}\bigr\} \times [0,n]$ maps onto a dual curve of
    $D$.
  \item $f$ restricts to an embedding on $(0,1] \times [0,n]$.
  \end{enumerate}
\end{lem}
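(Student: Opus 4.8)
The plan is to read off the local structure along $\zeta$ from \Lemref{pospos}, to build the $1 \times n$ grid one square at a time by following the square corners incident to $\zeta$, and then to check the four conditions, with the embedding statement being the delicate point. First I would determine the local picture at each vertex. By \Lemref{pospos}, the endpoints of $\zeta$ have valence $2$ and are each incident to exactly one square corner, while an internal vertex of curvature $0$ is either a cutpoint of valence $2$ incident to no corner, or a non-cutpoint of valence $3$ incident to two corners. I would exclude the cutpoint alternative by propagating squares from the initial vertex $u_0$: since $\delta(u_0) = 2$, the unique corner at $u_0$ spans both edges at $u_0$, so the first edge $e_1$ of $\zeta$ is a side of a square $s_1$; hence the next vertex $u_1$ is a corner of $s_1$ and so is incident to a corner, forcing the valence-$3$ case. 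Because $e_1$ lies in $\bd D$, the link of $u_1$ is forced to be the path $e_1 - r_1 - e_2$, where $r_1$ is an interior edge and $s_1$, $s_2$ are the two corners; inductively every internal vertex $u_j$ is of valence $3$ with corners $s_j$ (on $e_j, r_j$) and $s_{j+1}$ (on $r_j, e_{j+1}$).

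This produces squares $s_1, \dots, s_n$ with $s_j$ and $s_{j+1}$ sharing the interior edge $r_j$ meeting $\zeta$ at $u_j$, and I would use it to define $f$ by sending $[0,1] \times [j-1,j]$ onto $s_j$, the edge $\{1\} \times [j-1,j]$ onto $e_j$, and $[0,1] \times \{j\}$ onto $r_j$. Condition $(1)$ then holds by construction. For $(2)$, the bottom edge of $s_1$ and the top edge of $s_n$ are the second boundary edges at the valence-$2$ vertices $u_0$ and $u_n$, so they lie on $\bd D$; together with $\zeta \subseteq \bd D$ this yields the required arc of $\bd D$ once one checks its endpoints are distinct. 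For $(3)$, the images of the vertical midcubes of the $s_j$ form a chain of midcubes crossing exactly the edges $e_0', r_1, \dots, r_{n-1}$ and the top edge; since the two extreme edges lie on $\bd D$ the chain terminates there and is therefore a genuine dual curve, onto which $\{1/2\} \times [0,n]$ maps.

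The main obstacle is condition $(4)$, that $f$ embeds $(0,1] \times [0,n]$, the only permitted degeneracies being on the excluded left edge $\{0\} \times [0,n]$. Local injectivity of $f$ there follows from the valence and corner counts at the $u_j$, so the real content is that $s_1, \dots, s_n$ have distinct images: if $s_i = s_j$ with $i < j$, then the open squares $[0,1] \times (i-1,i)$ and $[0,1] \times (j-1,j)$, both contained in $(0,1] \times [0,n]$, would map to the same open square of $D$, destroying injectivity. Equivalently, the dual curve of condition $(3)$ must be embedded. Choosing an innermost repeat, one sees (using that $\zeta$ is embedded, which rules out $e_i, e_j$ being adjacent sides of the common square) that $e_i$ and $e_j$ must be opposite sides; the vertical midcubes of $s_i$ and $s_j$ then map to the \emph{same} midcube of that square, so the dual curve closes into a nonogon.

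Ruling this out is the step I expect to require the most care. In the $\CAT(0)$ setting of this subsection the contradiction is immediate from \Propref{nonogon}, which forbids nonogons, monogons, bigons and triangles. For a disc diagram that is not assumed $\CAT(0)$ I would instead adapt the enclosed-region computation in the proof of \Propref{nonogon}: the innermost repeat bounds a subdiagram $E$ whose total curvature is $2\pi$ by \Propref{gbcat}, while the portion of its boundary lying along the flat interior of $\zeta$ contributes no positive curvature, leaving too little to reach $2\pi$. The delicate point is relating the curvature in $E$ of the shared vertices to their vanishing curvature in $D$ and controlling the interior vertices of $E$ without the $\CAT(0)$ hypothesis; this is where the bookkeeping must be done carefully, and it is the only genuinely nontrivial part of the argument.
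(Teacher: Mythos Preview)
Your construction is the same as the paper's: you read off the local picture from \Lemref{pospos}, rule out the cutpoint alternative for internal vertices by propagating a square from the initial endpoint (the paper phrases this as ``let $v$ be the first cutpoint; then the preceding vertex is incident to a square, hence so is $v$''), and then build $f$ square by square, obtaining conditions (1)--(3) directly.

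For condition (4) the paper simply invokes \Propref{nonogon}: once $\{\tfrac12\}\times[0,n]$ maps onto a dual curve of $D$, a failure of injectivity on $(0,1]\times[0,n]$ would force that dual curve to self-intersect, which \Propref{nonogon} forbids. That proposition requires $D$ to be $\CAT(0)$; the paper uses this hypothesis implicitly from the subsection context even though it is not written into the statement of the lemma. You correctly identify this and note that in the $\CAT(0)$ setting the contradiction is immediate---that is exactly what the paper does, and it stops there.

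Your further attempt to establish (4) for an arbitrary square disc diagram goes beyond the paper and, as you suspect, does not go through as sketched. The enclosed-region computation in the proof of \Propref{nonogon} needs the $\CAT(0)$ hypothesis precisely to ensure that interior vertices of the subdiagram contribute nonpositive curvature; without it, such vertices can carry arbitrarily large positive curvature (e.g.\ an interior vertex of valence $2$ contributes $+\pi$), and the Gauss--Bonnet inequality you are aiming for fails. So the ``careful bookkeeping'' you anticipate cannot be completed in general, and you should either add the $\CAT(0)$ hypothesis to the statement or, as the paper does, rely on it from context.
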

\begin{proof}
  By \Lemref{pospos}, the initial and terminal vertices are not
  cutpoints and are each incident to a square.  By \Lemref{pospos},
  for each internal vertex $v$ of $\zeta$, either $v$ is a cutpoint
  and $v$ is not incident to any square or $v$ is not a cutpoint and
  $v$ is incident to two squares.  Assume for the sake of
  contradiction that $v$ is the first vertex of $\zeta$ which is a
  cutpoint of $D$.  Then the preceding vertex $v'$ is not a cutpoint
  so is incident to a square.  Hence $v$ must also be incident to a
  square and this is a contradiction.  So no vertex of $\zeta$ is a
  cutpoint and so, by \Lemref{pospos}, each internal vertex of $\zeta$
  is incident to two corners of squares and has valence $3$.  It
  follows that the inclusion of $\zeta$ extends to a map
  $f \colon [0,1] \times [0,n] \to D$ with $\{1\} \times [0,n]$
  mapping onto $\zeta$ by an isomorphism and with
  $[0,1] \times \{0\} \cup \{1\} \times [0,n] \cup [0,1] \times \{n\}$
  embedding in $\bd D$.  The map $f$ is also an immersion along each
  edge $[0,1] \times \{k\}$ and so the dual curve
  $\bigl\{\frac{1}{2}\bigr\} \times [0,n]$ maps onto a dual curve of
  $D$.  Then, by \Propref{nonogon}, the map $f$ is injective on
  $(0,1] \times [0,n]$.
\end{proof}

\begin{figure}
  \centering
  \begin{tikzpicture}
    \begin{scope}[scale=4/3]
      \pgfmathsetmacro\inradius{1}
      \pgfmathsetmacro\thickness{0.5}
      \pgfmathsetmacro\midradius{\inradius+\thickness/2}
      \pgfmathsetmacro\outradius{\inradius+\thickness}
      \pgfmathsetmacro\numsegments{7}

      \coordinate (e10) at (90:\outradius);
      \coordinate (e1m) at (90:\midradius);
      \coordinate (e11) at (90:\inradius);
      \coordinate (e20) at (-90:\inradius);
      \coordinate (e2m) at (-90:\midradius);
      \coordinate (e21) at (-90:\outradius);
      
      \node[vertex] at (e10) {};
      \node[vertex] at (e11) {};
      \node[vertex] at (e20) {};
      \node[vertex] at (e21) {};

      \draw[thick] (e11) arc (90:-90:\inradius);
      \draw[thick,dotted] (e1m) arc (90:-90:\midradius);
      \draw[thick,postaction={decorate},decoration={markings,mark=at
        position 1/2 with {\arrow{>},\node[label={right:$\beta$}]
          {};}}] (e10) arc (90:-90:\outradius);

      \draw[thick,postaction={decorate},decoration={markings,mark=at
        position 1/2 with {\arrow{>},\node[label={left:$e_1$}]
          {};}}] (e10) -- (e11);
      \draw[thick,postaction={decorate},decoration={markings,mark=at
        position 1/2 with {\arrow{>},\node[label={left:$e_2$}]
          {};}}] (e20) -- (e21);

      \draw[thick,postaction={decorate},decoration={markings,mark=at
        position 1/2 with {\arrow{>},\node[label={left:$\gamma'$}]
          {};}}] (e11) -- (e20);

      \pgfmathsetmacro\nsmo{\numsegments - 1}
      \foreach \i in {1,...,\nsmo}
      {
        \pgfmathsetmacro\angle{-90 + \i * 180 / \numsegments}
        \draw[thick] (\angle:\inradius) -- (\angle:\outradius);
        \node[vertex] at (\angle:\inradius) {};
        \node[vertex] at (\angle:\outradius) {};
      }
    \end{scope}
  \end{tikzpicture}
  \caption[A $\CAT(0)$ disc diagram from the proof of
  \Lemref{geodcurve}.]{A path $e_1 \gamma' e_2$ twice crossing a dual
    curve $\alpha$ of a $\CAT(0)$ disc diagram, as in the proof of
    \Lemref{geodcurve}.  The dual curve $\alpha$ is shown as a dotted
    line.}
  \figlabel{crossdctwice}
\end{figure}

\begin{lem}
  \lemlabel{geodcurve} Let $D$ be a $\CAT(0)$ disc diagram and let
  $\gamma$ be a geodesic of its $1$-skeleton $\sk{1}{D}$.  Then $\gamma$
  does not cross any dual curve of $D$ more than once.
\end{lem}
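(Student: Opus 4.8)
The plan is to argue by contradiction. Suppose $\gamma$ crosses some dual curve $\alpha$ at least twice and choose two crossings that are consecutive along $\gamma$, so that $\gamma$ contains a subpath $e_1 \gamma' e_2$ as in \Figref{crossdctwice}, where $e_1$ and $e_2$ are edges dual to $\alpha$ and the interior subpath $\gamma'$ crosses $\alpha$ nowhere. By \Propref{nonogon}, $D$ has no monogons or nonogons, so $\alpha$ is an embedded arc with both endpoints on $\bd D$; since $D$ is a disc, $\alpha$ separates $D$ into two closed subdiagrams meeting along $\alpha$. As $\gamma'$ never meets $\alpha$, it lies entirely in one of these halves. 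Write $p, q$ for the endpoints of $\gamma'$ and $x, y$ for the opposite endpoints of $e_1, e_2$, so that $x, y$ lie in the other half, with $e_1$ joining $x$ to $p$ and $e_2$ joining $q$ to $y$. (The degenerate case $p = q$ is excluded by the absence of monogons, since $e_1$ and $e_2$ share a vertex cannot both be crossed by a single embedded $\alpha$.)

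I first record what geodesy gives: since $\gamma$ is a geodesic, so is its subpath $e_1 \gamma' e_2$ from $x$ to $y$, whence $|x, y| = |\gamma'| + 2$. The heart of the argument is to contradict this by exhibiting a path from $x$ to $y$ of length at most $|\gamma'|$ lying on the far side of $\alpha$. The intended such path is the rail of the carrier of $\alpha$: once one knows that $\gamma'$ runs exactly along the carrier rail on its own side of $\alpha$ (a rail of some length $m$, so $|\gamma'| = m$), the opposite rail is a path from $x$ to $y$ of the same length $m = |\gamma'|$, giving $|x,y| \le |\gamma'| < |\gamma'| + 2$, the desired contradiction. Equivalently, this amounts to reflecting $\gamma'$ across $\alpha$: each edge of $\gamma'$ is a side of a carrier square and so has a parallel partner on the opposite side, and concatenating these partners yields the far-side path.

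The main obstacle, and where I expect the real work to lie, is justifying that $\gamma'$ is pinned to the carrier rail rather than bulging into the half of $D$ it occupies. I would establish this by a curvature argument. Adjoining to the region between $\gamma'$ and $\alpha$ the squares of the carrier of $\alpha$ produces a genuine square subdiagram $R$ whose boundary path consists of $\gamma'$, the two rungs $e_1, e_2$, and the opposite carrier rail from $x$ to $y$. Every interior vertex of $R$ is interior in $D$ and hence, by the $\CAT(0)$ hypothesis, nonpositively curved, so by \Propref{gbcat} and \Corref{greendlingercat} all of the $2\pi$ of curvature of $R$ is concentrated in at least four $\tfrac{\pi}{2}$-corners on $\bd R$. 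The delicate point is that a geodesic may legitimately turn the corner of a square, so $\gamma'$ can in principle carry positively curved corners; these must be ruled out. I would do so by choosing the configuration $e_1\gamma'e_2$ (over all geodesics and all consecutive crossing pairs) so that $R$ has minimal area: a corner of $\gamma'$ cutting off a non-carrier square $s$ of $R$ could be traded for the two opposite sides of $s$, yielding another geodesic that still crosses $\alpha$ at $e_1$ and $e_2$ but bounds a region of strictly smaller area, contradicting minimality. With such corners excluded, the only boundary vertices that can absorb the $2\pi$ are the four junctions with $e_1$ and $e_2$, which forces $R$ to reduce to the flat carrier and $\gamma'$ to coincide with the near rail, completing the reflection and hence the proof.
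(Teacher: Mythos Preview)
Your approach is correct and reaches the same contradiction as the paper, but by a different route. Both proofs set up the same subdisc (your $R$ is the paper's $D'$) bounded by $e_1\gamma' e_2$ and the far rail $\beta$, and both minimize its area over all offending configurations. From there the paper argues directly with dual curves: by minimality no dual curve of $D'$ has both ends on $\gamma'$, and by the prohibition of bigons (\Propref{nonogon}) no dual curve meeting $\beta$ can avoid $\gamma'$; hence the dual curves through $\gamma'$ and through $\beta$ are one and the same, giving $|\gamma'|=|\beta|$ immediately. Your argument instead runs Gauss--Bonnet on $R$: the trade move kills $\tfrac{\pi}{2}$-corners along the interior of $\gamma'$, forcing all positive curvature to the four vertices $x,y,p,q$, and you then conclude that $R$ collapses to the carrier. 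This last inference is the one place you are a little quick: ``exactly four $\tfrac{\pi}{2}$-corners and zero curvature elsewhere implies $R$ is a $1\times n$ grid'' is true but not yet established in the paper; you can close the gap cleanly by applying \Lemref{protorail} to $\gamma'$ once you know its endpoints have curvature $\tfrac{\pi}{2}$ and its interior vertices curvature $0$, which identifies the dual curve parallel to $\gamma'$ with $\alpha$ and hence $|\gamma'|$ with the number of carrier squares. The paper's dual-curve count sidesteps this entirely and never needs the stronger conclusion that $\gamma'$ equals the near rail. A small aside: your dismissal of $p=q$ via ``no monogons'' is not quite the right reason---two edges at a common vertex can in principle be dual to the same embedded curve---but the case is in fact absorbed by your Gauss--Bonnet count, which would demand $\kappa(p)\ge\pi$ at a valence-$2$ boundary vertex.
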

\begin{proof}
  Suppose $\gamma$ crosses the dual curve $\alpha$ at least twice.
  Let $e_1$ and $e_2$ be two directed edges along which $\gamma$
  crosses $\alpha$ such that the subpath $\gamma'$ of $\gamma$ from
  the terminal point of $e_1$ to the initial point of $e_2$ does not
  cross $\alpha$.  Let $\beta$ be the path of $\sk{1}{D}$ from the
  initial point of $e_1$ to the terminal point of $e_2$ running
  parallel to $\alpha$.  Let $D'$ be the subdisc of $D$ with boundary
  $\bd D' = e_1 \gamma' e_2 \beta^{-1}$, as in \Figref{crossdctwice}.
  
  Now, suppose we chose $\gamma$, $\alpha$ and the $e_i$ so as to
  minimize the area of $D$.  Then no dual curve starts and ends on
  $\gamma'$.  Hence the dual curves starting on $\gamma'$ end on
  $\beta$. The subdisc $D'$ is $\CAT(0)$ so, by the prohibition in
  \Propref{nonogon} of bigons, every dual curve starting on $\beta$
  must end on $\gamma'$.  So the dual curves of $\gamma'$ and $\beta$
  are one and the same.  Then $\gamma'$ and $\beta$ have the same
  length and so $\beta$ is a shorter path than $e_1 \gamma' e_2$.
  Then $\gamma$ is not a geodesic---a contradiction.
\end{proof}

\begin{prop}
  \proplabel{carrier} Let $\alpha$ be a dual curve of a $\CAT(0)$ disc
  diagram $D$.  Let $[0,1] \times [0,n]$ be a $1 \times n$ grid of
  squares with $n = |\alpha|$.  Let
  $f \colon [0,1] \times [0,n] \to D$ be the map whose restriction to
  $\{\frac{1}{2}\} \times [0,n]$ is the inclusion of $\alpha$.  Then
  $f$ is an embedding and the $1$-skeleton of $[0,1] \times [0,n]$ is
  convex in the $1$-skeleton $\sk{1}{D}$ of $D$.  We call the image of
  $f$ the \defterm{carrier} of $\alpha$.
\end{prop}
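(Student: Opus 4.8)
The plan is to read off the structure of the carrier from the dual curves of $D$. Write $s_1, \ldots, s_n$ for the squares crossed by $\alpha$ in order, let $e_0, \ldots, e_n$ be the edges of $D$ dual to $\alpha$ (so $e_i$ separates $s_i$ from $s_{i+1}$, with $e_0, e_n$ the end edges), and for each $i$ let $\beta_i$ be the dual curve meeting $\alpha$ inside $s_i$. By \Propref{nonogon} the curve $\alpha$ is not a nonogon and contains no monogon, so it is an embedded arc; hence the $s_i$ are pairwise distinct, as are the $e_i$. If $\beta_i = \beta_j$ for some $i \ne j$ then this curve would cross $\alpha$ twice, giving a bigon, and if $\beta_i$ crossed $\beta_j$ then $\alpha, \beta_i, \beta_j$ would form a triangle; both are forbidden by \Propref{nonogon}. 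Thus $\beta_1, \ldots, \beta_n$ are pairwise distinct and pairwise disjoint. Finally, since $D$ has no nonogons, each of $\alpha, \beta_1, \ldots, \beta_n$ is a properly embedded arc with endpoints on $\bd D$, and therefore separates $D$.

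First I would prove that $f$ is an embedding. The two rails $L = f(\{0\} \times [0,n])$ and $R = f(\{1\} \times [0,n])$ are edge paths of length $n$ whose $k$-th edges are dual precisely to $\beta_k$; as the $\beta_i$ are distinct and each separates $D$, the endpoints of either rail lie on opposite sides of every $\beta_i$, so any path between them has length at least $n$. Hence both rails are geodesics and in particular embedded. Because $\alpha$ is disjoint from $L$ and from $R$ (these consist of edges not dual to $\alpha$, and $\alpha$ avoids the $0$-skeleton) while it separates $D$, the rail $L$ lies on one side of $\alpha$ and $R$ on the other, so $L \cap R = \emptyset$. Together with the fact that the $s_i$ are distinct squares glued consecutively along the distinct edges $e_i$, this shows that $f$ identifies no two points of the grid, i.e.\ $f$ is an embedding, and its image is the carrier.

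For convexity I would use that each side of a dual curve is a convex subset of $\sk{1}{D}$: by \Lemref{geodcurve} a geodesic crosses any given dual curve at most once, so a geodesic joining two vertices on the same side of that curve never crosses it, and hence any intersection of such half-spaces is convex. The dual curves not crossing $\alpha$ are exactly those other than $\beta_1, \ldots, \beta_n$, and none of them meets any square $s_i$, so each such curve $\delta$ leaves the whole carrier on one side; let $H_\delta$ denote that side. The carrier is contained in $\bigcap_\delta H_\delta$, and I would establish the reverse inclusion, which exhibits the carrier as an intersection of half-spaces and so yields convexity. To show a vertex $v$ outside the carrier is excluded, take a carrier vertex $w$ nearest to $v$ and a geodesic from $v$ to $w$; analyzing its first edge and using that the $\beta_i$ are disjoint and linearly ordered along $\alpha$, one locates a dual curve not crossing $\alpha$ that separates $v$ from $w$, hence from the carrier.

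The main obstacle is exactly this last inclusion, that the carrier exhausts $\bigcap_\delta H_\delta$. Since the rails need not lie on $\bd D$, a priori a geodesic between two carrier vertices could try to escape through squares on the far side of a rail. The crux is to rule this out by showing that any dual curve which separates a vertex from the carrier yet crosses $\alpha$ can be traded, using the disjointness and ordering of the $\beta_i$ together with \Propref{nonogon}, for one that does not cross $\alpha$; equivalently, that a geodesic between carrier vertices can bound only ladder-like grids with the rails and so cannot leave the intersection of the parallel half-spaces.
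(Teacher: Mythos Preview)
Your embedding argument is fine and close in spirit to the paper's (the paper is terser: it observes that \Propref{nonogon} already gives injectivity on $(0,1)\times[0,n]$ and that $\alpha$ separates, so only the two rails need attention).

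The convexity part, however, is where you diverge from the paper and also where your argument is incomplete, as you yourself note. Your plan is to realise the carrier as the intersection of the half-spaces $H_\delta$ over dual curves $\delta$ not meeting $\alpha$. The forward inclusion is immediate, but the reverse inclusion (a vertex outside the carrier is separated from it by some $\delta$ disjoint from $\alpha$) is exactly the nontrivial content, and ``analyzing the first edge'' of a geodesic to a nearest carrier vertex does not obviously produce such a $\delta$: that edge may well be dual to one of the $\beta_i$, and trading a $\beta_i$ for a curve disjoint from $\alpha$ is not an automatic move. So as written there is a genuine gap here.

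The paper avoids this detour entirely. It reduces convexity of the carrier to convexity of each rail: any geodesic between vertices on opposite rails crosses $\alpha$ exactly once (by \Lemref{geodcurve}), necessarily along one of the rungs $e_k$, and the two subpaths on either side are then geodesics between rail vertices. For rail convexity the paper argues directly: if some geodesic $\gamma$ between $f(0,k)$ and $f(0,k')$ leaves the rail, take a minimal such instance so that $\gamma$ together with the rail segment bounds an embedded subdisc $D'$. This $D'$ contains a square, hence two crossing dual curves $\alpha',\alpha''$. Neither can have both ends on $\gamma$ (by \Lemref{geodcurve}), and neither can have both ends on the rail segment (extending into $D$ they would each enter the carrier and cross $\alpha$ twice, a bigon). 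So each of $\alpha',\alpha''$ exits $D'$ once through the rail, hence crosses $\alpha$ in $D$; together with $\alpha'\cap\alpha''\neq\emptyset$ this gives a triangle, contradicting \Propref{nonogon}. This one-shot triangle argument replaces your half-space bookkeeping and closes the proof cleanly.
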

\begin{proof}
  By \Propref{nonogon}, the restriction of $f$ to $(0,1) \times [0,n]$
  is an embedding and the complement of $\alpha$ in $D$ has two
  components.  So, it suffices to show that the restrictions of $f$ to
  the parallel paths $\{0\} \times [0,n]$ and $\{1\} \times [0,n]$ are
  convex embeddings.  Suppose there is a geodesic
  $\gamma \colon P \to D$ from some $f(0,k)$ to some $f(0,k')$, with
  $k \le k'$, that does not coincide with $f|_{\{0\} \times [k,k']}$.
  Choose such a $\gamma$ minimizing $(|\gamma|,k'-k)$
  lexicographically.  By minimality of $\gamma$, the closed path given
  by concatenating $\gamma$ and $f|_{\{0\} \times [k,k']}$ is
  embedded.  Hence the $\CAT(0)$ subdisc diagram $D' \subset D$
  bounded by the concatenation of $\gamma$ and
  $f|_{\{0\} \times [k,k']}$ contains at least one square $s$.  By
  \Propref{nonogon}, two distinct dual curves $\alpha'$ and $\alpha''$
  cross at $s$.  By \Lemref{geodcurve} and \Propref{nonogon}, no dual
  curve of $D'$ has both ends on $P$ or both ends on
  $\{0\} \times [k,k']$.  Hence $\alpha$, $\alpha'$ and $\alpha''$
  pairwise intersect, contradicting \Propref{nonogon}.
\end{proof}

\begin{prop}
  \proplabel{dcdistance} Let $D$ be a $\CAT(0)$ disc diagram and let
  $u$ and $v$ be vertices of $D$.  Then the distance between $u$ and
  $v$ in the $1$-skeleton $\sk{1}{D}$ if $D$ is equal to the number $n$
  of dual curves of $D$ separating $u$ from $v$.
\end{prop}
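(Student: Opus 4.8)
The plan is to fix a geodesic $\gamma$ in $\sk{1}{D}$ from $u$ to $v$ and to identify its edges bijectively with the dual curves that separate $u$ from $v$. Two preliminary facts drive everything. First, the midcube of each edge of $D$ lies on exactly one dual curve, so traversing an edge crosses exactly one dual curve; hence the length of any path equals the number of dual-curve crossings it makes, counted with multiplicity. Second, by the prohibition of nonogons in \Propref{nonogon} together with the argument in \Propref{carrier}, every dual curve $\alpha$ is an embedded boundary-to-boundary arc whose complement $D \setminus \alpha$ has exactly two components; this is precisely what lets us speak of $\alpha$ \emph{separating} $u$ from $v$.

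Now apply \Lemref{geodcurve}: the geodesic $\gamma$ crosses each dual curve at most once, so distinct edges of $\gamma$ cross distinct dual curves, and the number of edges of $\gamma$ equals the number of distinct dual curves it crosses. It remains to show that this set of dual curves is exactly the set of separating ones. In one direction, if $\gamma$ crosses $\alpha$ (necessarily once), then all vertices of $\gamma$ before the crossing edge lie in one component of $D \setminus \alpha$ and all after lie in the other, so its endpoints $u$ and $v$ lie in different components and $\alpha$ separates them. In the other direction, if $\alpha$ separates $u$ from $v$, then labelling each vertex of $\gamma$ by the component of $D \setminus \alpha$ containing it gives a label that changes exactly along edges dual to $\alpha$; since the labels of $u$ and $v$ differ, $\gamma$ crosses $\alpha$ an odd number of times, in particular at least once. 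Thus $\gamma$ crosses $\alpha$ if and only if $\alpha$ separates $u$ from $v$.

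Combining the two directions, the dual curves crossed by $\gamma$ are exactly the $n$ separating ones, each crossed once, whence $|u,v| = |\gamma| = n$. The main obstacle is the topological bookkeeping underlying the separation statement: one must know that each dual curve cuts $D$ into precisely two components, so that \textbf{``separating''} and \textbf{``crossed an odd number of times''} genuinely coincide. This reduces entirely to the absence of nonogons guaranteed by \Propref{nonogon}, which forces each dual curve to be an arc with both endpoints on $\bd D$; once that is in hand, everything else is the elementary parity argument above together with the single-crossing property of geodesics from \Lemref{geodcurve}.
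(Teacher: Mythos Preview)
Your proof is correct and follows essentially the same approach as the paper: fix a geodesic, invoke \Lemref{geodcurve} to ensure each dual curve is crossed at most once, and use the parity/separation argument to identify the edges of the geodesic bijectively with the separating dual curves. The paper's version is terser and leaves the topological justification that a dual curve disconnects $D$ implicit, whereas you spell it out via the no-nonogon clause of \Propref{nonogon}; otherwise the arguments coincide.
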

\begin{proof}
  Let $\gamma$ be a geodesic of $\sk{1}{D}$ from $u$ to $v$.  Then
  $\gamma$ must cross the dual curves separating $u$ from $v$ and so
  must traverse at least $n$ edges.  Suppose $\gamma$ traversed some
  additional edge $e$ and let $\alpha$ be the dual curve containing
  the midcube of $e$.  By \Lemref{geodcurve}, $\alpha$ is not a dual
  curve separating $u$ and $v$.  But then $\alpha$ must be traversed a
  second time if $\gamma$ is to end at $v$.  This is impossible, by
  \Lemref{geodcurve} and so $\gamma$ traverses exactly $n$ edges and
  the distance $|u,v|$ between $u$ and $v$ in $\sk{1}{D}$ is $n$.
\end{proof}

\subsection{Quadratic Isoperimetric Function and Word Problem}
\sseclabel{quadisoper}

The $\CAT(0)$ property of disc diagrams in quadric complexes implies
the following proposition and corollaries.

\begin{prop}
  Let $X$ be a quadric complex.  Then $X$ has quadratic isoperimetric
  function.
\end{prop}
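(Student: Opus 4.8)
The plan is to bound the area of a minimal area disc diagram by a quadratic function of its boundary length, using the dual curve technology developed in \Ssecref{catzdiscs}. Let $\gamma$ be a nullhomotopic closed path in $X$ of length $n$; after reducing $\gamma$, which only decreases its length, we may assume it is immersed. Since $X$ is simply connected, $\gamma$ bounds a disc diagram, and we choose one, $D$, of minimal area, so that its boundary path satisfies $|\pbd D| = n$. By \Lemref{minardd} and the discussion following it, $D$ is a $\CAT(0)$ disc diagram, which is what unlocks the dual curve machinery.

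The key observation is that the area of $D$, that is its number of squares, equals the number of intersection points of the dual curves of $D$, since these are in one-to-one correspondence. By \Propref{nonogon}, $D$ has no monogons, so no dual curve self-intersects, and no bigons, so any two distinct dual curves cross at most once. Hence the number of squares is at most the number of unordered pairs of distinct dual curves, namely $\binom{m}{2}$, where $m$ denotes the number of dual curves of $D$. To bound $m$, I would invoke the remark recorded just after \Propref{nonogon}: since $D$ has no nonogons, each dual curve is an embedded arc whose two ends are midcubes of boundary edges, and $D$ has exactly $m = n/2$ dual curves. Combining these facts yields
\[ \text{Area}(D) \le \binom{n/2}{2} = \frac{n(n-2)}{8} \le \frac{n^2}{8}, \]
so $X$ satisfies a quadratic isoperimetric inequality.

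The argument is short precisely because the essential work has already been carried out in the $\CAT(0)$ structure theory of \Ssecref{catzdiscs}; the absence of nonogons, monogons and bigons is doing all the heavy lifting. The only points requiring a little care are the initial reduction to an immersed boundary path and the bookkeeping of dual curve endpoints along $\bd D$. I expect that justifying the clean count $m = n/2$ in the possibly singular case is the most delicate step, since one must check that minimality precludes boundary spurs and that every boundary edge carries exactly one dual curve endpoint; once this is confirmed via \Propref{nonogon}, the quadratic bound follows immediately and the precise constant is inessential.
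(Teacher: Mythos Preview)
Your proof is correct and follows essentially the same approach as the paper: bound the area of a minimal (hence $\CAT(0)$) disc diagram by the number of dual curve crossings, which is at most $\binom{m}{2}$ with $m$ the number of dual curves. The paper uses the cruder bound $m < n$ rather than $m = n/2$, which sidesteps the bookkeeping about spurs and singular diagrams you worried about; since the constant is inessential, your reduction to an immersed boundary path is harmless but unnecessary.
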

\begin{proof}
  The number of squares in a $\CAT(0)$ disc diagram is at most
  quadratic in the length $|\pbd D|$ of its boundary path.  Indeed, if
  $|\pbd D| = n$ then $D$ has less than $n$ dual curves.  Then, by
  \Propref{nonogon}, there are less than $\binom{n}{2}$ intersections
  of dual curves in $D$ and so less than $\binom{n}{2}$ squares.
\end{proof}

\begin{cor}
  Let $G$ be a quadric group.  Then $G$ has quadratic isoperimetric
  function.
\end{cor}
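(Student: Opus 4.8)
The plan is to deduce this directly from the preceding proposition, using the standard relationship between the Dehn function of a finitely presented group and the isoperimetric function of a simply connected complex on which it acts geometrically. First I would unwind the hypothesis: since $G$ is quadric, it acts properly and cocompactly by combinatorial automorphisms on some quadric complex $X$, which is by definition simply connected. Because $X$ is a simply connected combinatorial $2$-complex and the action is proper and cocompact, the Milnor--{\v S}varc lemma guarantees that $G$ is finitely presented; in fact a finite presentation can be read off from a compact fundamental domain, with generators indexed by edge-orbits and relators by the boundary words of the squares.

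The second and central step is to compare the Dehn function of $G$ with the isoperimetric function of $X$. The general principle I would invoke is that whenever $G$ acts properly and cocompactly on a simply connected combinatorial $2$-complex $X$, the Dehn function of $G$ with respect to any finite presentation is equivalent, under the standard equivalence relation $\simeq$ on isoperimetric functions (closure under affine rescaling of domain and codomain), to the isoperimetric function of $X$. Concretely, a nullhomotopic edge-loop in the Cayley $2$-complex and its image under the quasi-isometry $G \to \sk{0}{X}$ bound disc diagrams whose areas differ by only a multiplicative and an additive constant controlled by the diameter of the fundamental domain, so each function bounds the other up to $\simeq$.

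Finally, the preceding proposition supplies that $X$ has quadratic isoperimetric function, and since the $\simeq$-equivalence class of $n \mapsto n^2$ consists precisely of the quadratically bounded functions, the Dehn function of $G$ is quadratic as well. I expect no genuine obstacle here: the only point requiring care is the bookkeeping in the comparison step, namely checking that the constants relating diagram areas in $X$ to those in the Cayley complex are uniform over all loops. This uniformity is exactly what cocompactness provides, so the argument reduces to the routine translation between complex-level and group-level isoperimetric bounds.
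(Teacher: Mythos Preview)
Your proposal is correct and takes essentially the same approach as the paper: the paper's proof is the single line ``Follows by quasi-isometry invariance of isoperimetric functions,'' citing Alonso, which is exactly the principle you invoke in your second step. Your write-up simply unpacks the content of that citation (Milnor--{\v S}varc for finite presentation, then the comparison of Dehn functions under a geometric action), so there is no substantive difference.
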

\begin{proof}
  Follows by quasi-isometry invariance of isoperimetric functions
  \cite{Alonso:1990}.
\end{proof}

\begin{cor}
  Let $G$ be a quadric group.  Then $G$ has decidable word problem.
\end{cor}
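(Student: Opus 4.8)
The plan is to convert the quadratic isoperimetric bound of the previous corollary into an explicit decision algorithm, using the standard principle that a finitely presented group whose Dehn function admits a recursive upper bound has solvable word problem. Since a quadratic polynomial is certainly recursive, the preceding corollary supplies exactly the geometric input required.

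First I would fix a finite presentation. As $G$ acts properly and cocompactly on the simply connected complex $X$, the {\v S}varc--Milnor lemma furnishes a finite presentation $\gpres{\mcS}{\mcR}$ for $G$; this is the presentation with respect to which the isoperimetric function of the previous corollary is measured. Hence there is a constant $C$ so that any word $w$ of length $n$ in the generators $\mcS^{\pm 1}$ representing the identity bounds a van Kampen diagram over $\gpres{\mcS}{\mcR}$ with at most $Cn^2$ two-cells.

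The algorithm then proceeds as follows. Given a word $w$ of length $n$, enumerate every van Kampen diagram over the finite presentation whose boundary is labelled by $w$ and which has at most $Cn^2$ two-cells; declare $w = 1$ if such a diagram is found and $w \neq 1$ otherwise. Correctness is immediate from the isoperimetric inequality: if $w = 1$ then some diagram lies within the area bound, while if $w \neq 1$ then no van Kampen diagram with boundary $w$ exists at all, so in particular none is found.

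The crux is that this search is genuinely finite and effectively enumerable. Because the presentation is finite, each two-cell of a candidate diagram carries one of finitely many relator labels; with the number of two-cells capped at $Cn^2$ and the boundary length fixed at $n$, there are only finitely many combinatorial gluing patterns to examine, so the collection of candidate diagrams is finite and can be generated mechanically. It is precisely this effective finiteness --- the passage from a recursive area bound to a terminating enumeration --- that constitutes the substance of the argument, the quantitative geometry having already been established.
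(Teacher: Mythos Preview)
Your proposal is correct and follows essentially the same route as the paper: both deduce decidability from the quadratic isoperimetric inequality via the standard principle that a recursive bound on the Dehn function yields a terminating search for van Kampen diagrams. The only difference is cosmetic---the paper cites this last implication to Gersten, whereas you unpack the enumeration argument explicitly.
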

\begin{proof}
  Since $G$ has a quadratic isoperimetric function, it has a recursive
  isoperimetric function.  It follows that $G$ has decidable word
  problem \cite[Theorem~2.1 and Lemma~2.2]{Gersten:1991}.
\end{proof}

\subsection{\texorpdfstring{$4$-Bridged}{4-Bridged} Graphs}
\sseclabel{fbridged}

We now use the properties of disc diagrams in quadric complexes
developed in \Ssecref{catzdiscs} to characterize them by their
$1$-skeleta.

A graph $\Gamma$ is \defterm{$n$-bridged} if every isometrically
embedded cycle of $\Gamma$ has length $n$.  Graphs that are
$3$-bridged are known as \newterm{bridged graphs} in the metric graph
theory literature \cite{Anstee:1988} and $4$-bridged graphs are the same
as the \newterm{hereditary modular graphs} of metric graph theory
\cite{Bandelt:1988}.  Note that a $4$-bridged graph is simplicial and
bipartite.  Any immersed $4$-cycle or $6$-cycle in a bipartite simplicial
graph is embedded.  Any embedded $6$-cycle in a $4$-bridged graph has a
\newterm{diagonal}, i.e., an edge joining a pair of opposing vertices.

A square complex $X$ is \defterm{$4$-flag} if every embedded $4$-cycle
in $X$ bounds a unique square and that the boundary of every square is
an embedded $4$-cycle.

\begin{lem}
  \lemlabel{fbsc} Let $X$ be a $4$-flag square complex and suppose
  $X^1$ is $4$-bridged.  Then $X$ is simply connected.
\end{lem}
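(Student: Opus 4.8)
The plan is to show directly that every closed combinatorial path in the $1$-skeleton $\sk{1}{X}$ is nullhomotopic in $X$; since $X$ is connected by convention and every loop in a CW-complex is homotopic to an edge-loop, this yields simple connectivity. I would argue by strong induction on the length $|\gamma|$ of a closed path $\gamma$, the inductive hypothesis being that every closed path of length less than $|\gamma|$ bounds a disc diagram in $X$. The constant path is the base case, and since $\sk{1}{X}$ is bipartite every closed path has even length.

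First I would dispose of the case that $\gamma$ is not an embedded cycle. If $\gamma$ repeats a vertex away from its endpoints, say $\gamma(i) = \gamma(j)$ with $0 \le i < j \le |\gamma|$ and $(i,j) \ne (0, |\gamma|)$, then $\gamma$ factors as a concatenation of two closed paths based at that vertex, each of positive length less than $|\gamma|$; both are nullhomotopic by induction, hence so is $\gamma$. (Backtracks are the special case $\gamma(i-1) = \gamma(i+1)$, and are covered here.) So I may assume $\gamma$ is an embedded cycle of even length $|\gamma| \ge 4$. If $|\gamma| = 4$, then $\gamma$ is an embedded $4$-cycle, which bounds a square by the $4$-flag hypothesis, and that square provides the desired nullhomotopy.

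The heart of the argument is the case $|\gamma| \ge 6$, where the $4$-bridged hypothesis does the work. Since every isometrically embedded cycle of $\sk{1}{X}$ has length $4$, the embedded cycle $\gamma$ is not isometrically embedded, so there are vertices $u = \gamma(i)$ and $v = \gamma(j)$ on $\gamma$ whose distance $|u,v|$ in $\sk{1}{X}$ is strictly less than their distance along $\gamma$, namely $\min(|\gamma_1|, |\gamma_2|)$, where $\gamma_1$ and $\gamma_2$ are the two arcs of $\gamma$ between $u$ and $v$. Choosing a geodesic $\sigma$ from $u$ to $v$ and inserting the backtrack $\sigma^{-1}\sigma$, I would exhibit $\gamma$, up to homotopy, as the concatenation of the loops $\gamma_1 \sigma^{-1}$ and $\sigma \gamma_2$. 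Because $|\sigma| = |u,v| < \min(|\gamma_1|,|\gamma_2|)$, each of these loops is strictly shorter than $\gamma$, so both are nullhomotopic by the inductive hypothesis, and therefore $\gamma$ is as well.

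The step I expect to be most delicate is the length bookkeeping: the chord argument shortens \emph{both} pieces only when the witnessing pair $u,v$ satisfies $|u,v| < \min(|\gamma_1|,|\gamma_2|)$, which is exactly what failure of isometric embedding supplies, so one must extract the pair from the $4$-bridged condition in this precise form rather than merely observing that some shortcut exists. The remaining points---that reducing to an embedded cycle is legitimate and that it suffices to treat combinatorial loops in $\sk{1}{X}$---are routine, but they should be stated so that the induction is genuinely on path length and terminates at the constant path.
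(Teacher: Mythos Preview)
Your proposal is correct and follows essentially the same approach as the paper: strong induction on the length of a closed path, with short paths handled via the $4$-flag hypothesis and longer ones split by a chord witnessing failure of isometric embedding, exactly as you describe. The only cosmetic difference is that the paper does not separate the non-embedded case explicitly, instead folding it into ``not isometrically embedded''; your explicit reduction to embedded cycles is slightly cleaner but not materially different.
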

\begin{proof}
  It suffices to construct a disc diagram for a given cycle
  $\alpha \colon C \to X^1$.  Since the girth of $X^1$ is at least
  $4$, if $|C| < 4$ then $\alpha$ factors through a tree and so has a
  singular disc diagram with no squares.  If $|C| = 4$ then, by
  $4$-flagness, either $\alpha$ factors through a tree or $\alpha$
  bounds a square whose inclusion is a disc diagram for $\alpha$.  We
  argue now by induction on $|C| > 4$.  Since $\Gamma$ is $4$-bridged,
  our cycle $\alpha$ is not isometrically embedded.  So, for some pair
  of vertices $u,v \in C^0$, there exists a path
  $\gamma \colon P \to X^1$ from $\alpha(u)$ to $\alpha(v)$ with
  $|P| < |Q|$ and $|P| < |R|$ where $Q$ and $R$ are the two segments
  of $C$ between $u$ and $v$.  We glue $\alpha$ and $\gamma$ together
  to obtain a map $\beta \colon (C \sqcup R)/{\sim} \to X^1$ by
  identifying the initial vertex of $R$ with $u$ and the terminal
  vertex of $R$ with $v$.  Then $\beta|_{P \cup R}$ and
  $\beta|_{Q \cup R}$ are cycles of length
  $|P| + |R| < |P| + |Q| = |C|$ and $|Q| + |R| < |Q| + |P| = |C|$.
  So, by induction, we have disc diagrams $D_1$ and $D_2$ for
  $\beta|_{P \cup R}$ and $\beta|_{Q \cup R}$.  Gluing $D_1$ and $D_2$
  together along $R$ we obtain a disc diagram for $C$.
\end{proof}

\begin{prop}
  \proplabel{mgchar} Let $X$ be a square complex.  The following are
  equivalent.
  \begin{enumerate}
  \item \itmlabel{quad} $X$ is quadric.
  \item \itmlabel{brgd} $X$ is $4$-flag and the $1$-skeleton $\sk{1}{X}$
    of $X$ is $4$-bridged.
  \item \itmlabel{sxcc} $X$ is $4$-flag and simply connected and the
    $1$-skeleton $\sk{1}{X}$ of $X$ is simplicial and every $6$-cycle
    in $X^1$ has a diagonal.
  \end{enumerate}
\end{prop}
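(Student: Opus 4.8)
The plan is to establish the equivalence by the cyclic chain (1) $\Rightarrow$ (2) $\Rightarrow$ (3) $\Rightarrow$ (1), arranged so that the only substantial argument, namely that isometric cycles have length $4$, is confined to the first implication, where the $\CAT(0)$ disc diagram machinery of \Ssecref{catzdiscs} is available. The implication (2) $\Rightarrow$ (3) is then nearly immediate: \Lemref{fbsc} supplies simple connectivity from $4$-flagness together with $4$-bridgedness, while the remaining assertions of (3)---that $\sk{1}{X}$ is simplicial and that every $6$-cycle has a diagonal---are exactly the quoted structural facts about $4$-bridged graphs. For (3) $\Rightarrow$ (1), note that (1) asks for simple connectivity (given) together with the four local conditions of \Defnref{quadric}. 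Condition~\defnitmref{quadric}{imm} holds because $4$-flagness makes each square boundary an embedded $4$-cycle; conditions \defnitmref{quadric}{piec3} and \defnitmref{quadric}{rep2} follow by feeding the relevant embedded $4$-cycles (the closing $4$-cycle of the two folded squares, respectively the outer $4$-cycle of \Figref{repl2}) into $4$-flagness, using simpliciality to see that the two closing edges in the fold picture map to a common edge; and \defnitmref{quadric}{rep3} follows because the outer $6$-cycle of \Figref{repl3}, being embedded, has a diagonal splitting it into two $4$-cycles, each of which bounds a square by $4$-flagness, producing one of the three admissible fillings.

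The heart of the proof is therefore (1) $\Rightarrow$ (2). First I would record $4$-flagness for a quadric $X$: an embedded $4$-cycle bounds a minimal-area, hence $\CAT(0)$ (\Lemref{minardd}), disc diagram $D$ with embedded boundary; by \Propref{nonogon} such a $D$ has exactly two dual curves meeting in at most one square, so $D$ is a single square, and uniqueness follows since condition~\defnitmref{quadric}{piec3} forbids two squares with the same attaching map. That square boundaries are embedded $4$-cycles then follows once $\sk{1}{X}$ is known to be simplicial. For the $4$-bridged condition, let $C$ be an isometrically embedded cycle of length $n$, and let $D$ be a minimal-area $\CAT(0)$ disc diagram with $\bd D = C$. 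Since the $1$-skeleton of a $\CAT(0)$ disc diagram is bipartite and $C$ is embedded (so $D$ has no spurs and carries a genuine square), $n$ is even and at least $4$; it remains to rule out $n \ge 6$.

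For the last step I would argue from the $\CAT(0)$ geometry of $D$. If $D$ has an internal edge joining two boundary vertices $x,y$, then $\lvert x,y\rvert_X = 1$ while $\lvert x,y\rvert_C \ge 2$, contradicting isometry of $C$; hence $D$ has an interior vertex. By \Corref{greendlingercat} the boundary carries at least four vertices of curvature $\tfrac{\pi}{2}$; fix such a corner $v$, lying in a single square $s$ with boundary neighbours $a,b$ on $C$ and opposite vertex $w$. Since $\lvert v,w\rvert_X=2$, if $w$ lay on $C$ it would be at $C$-distance $2$ from $v$, hence a second neighbour of $a$ or of $b$; but then one of the edges $aw$, $bw$ of $s$ would join two $C$-vertices at $C$-distance $3$, again contradicting isometry (this is where $n\ge 6$ is used, to force the $C$-distance to equal $3$). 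Thus $w$ must be interior, and I expect this to be the main obstacle: one has to convert an interior opposite vertex into an honest reduction rather than a mere relocation. The plan is an innermost, minimal-counterexample argument---replacing $s$ by $w$ yields a smaller-area $\CAT(0)$ diagram $D'$ bounding a cycle $C'$ of the same length, and one argues that any failure of $C'$ to remain isometric localizes at $w$ and feeds back a strictly shorter isometric cycle of length at least $6$, contradicting minimality of $n$. Ensuring that this process strictly decreases a well-chosen complexity, and so terminates, is the delicate point of the whole proof.
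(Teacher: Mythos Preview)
Your implications $(2)\Rightarrow(3)$ and $(3)\Rightarrow(1)$ match the paper's argument essentially verbatim (you should add that simple connectedness makes $\sk{1}{X}$ bipartite, which is what forces an \emph{immersed} $4$- or $6$-cycle to be embedded before you invoke $4$-flagness or the diagonal condition). The divergence is entirely in $(1)\Rightarrow(2)$, in the step showing that an isometrically embedded cycle $C$ has length at most $4$.

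The paper does not attempt a corner--removal induction. Instead it argues directly with dual curves of a $\CAT(0)$ filling $D$ of $C$: since $C$ is isometric, every boundary subpath of length at most $\tfrac{1}{2}|C|$ is geodesic in $D$, so by \Lemref{geodcurve} no dual curve can have both ends in such a subpath. Hence every dual curve joins a boundary edge to its antipode. But then any two dual curves cross, and \Propref{nonogon} forbids three pairwise crossing dual curves, so there are at most two, i.e.\ $|C|\le 4$. This is a two-line argument once the machinery of \Ssecref{catzdiscs} is in place.

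Your proposed route, by contrast, leaves its key step open. You correctly show that the vertex $w$ opposite a boundary corner is interior, but the subsequent plan---pass to $D'=D\setminus s$ with boundary $C'$, and if $C'$ fails to be isometric extract a strictly shorter isometric $\ge 6$-cycle---is not an argument, only a hope. The failure of isometry at $w$ gives a vertex $u$ on $C$ with $|u,w|_X < |u,w|_{C'}$, but there is no evident way to manufacture from this a shorter \emph{isometric} cycle: the shortcut from $u$ to $w$ need not lie along $C$ at all, and patching it in typically produces a cycle that is neither shorter nor isometric. Making this terminate would require a genuinely new idea (a complexity that strictly drops under both square removal and chord insertion), and you have not supplied one. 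The paper's antipodal--dual--curve argument sidesteps this entirely; you should replace your induction with it.
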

\begin{proof}
  (\itmref{quad}) $\Rightarrow$ (\itmref{brgd}) Suppose $X$ is
  quadric.  Simple connectivity of $X$ and the fact that its $2$-cells
  all have even boundary length imply that cycles in $X$ have even
  length and so $X^1$ is bipartite.  The $\CAT(0)$ property of minimal
  disc diagrams in $X$ further implies that no embedded cycle has
  length $2$ and that each embedded $4$-cycle bounds a square: no
  nonsingular $\CAT(0)$ disc diagram has boundary length $2$ and the
  only nonsingular $\CAT(0)$ disc diagram of boundary length $4$ is a
  square.  So $X$ is $4$-flag and any embedded cycle has length at
  least $4$.  It remains to show that any isometrically embedded cycle
  $\gamma$ of $X$ has length at most $4$.  Suppose $\gamma$ is
  isometrically embedded and take a locally minimal (and hence
  $\CAT(0)$) disc diagram $D$ in $X$ with boundary path $\gamma$.  Let
  $\alpha$ be a dual curve of $D$ with endoints at edges $e$ and $e'$
  of $\bd D$.  Since $\gamma$ is isometric, any subpath of $\bd D$ of
  length at most $\frac{1}{2}|\bd D|$ must be geodesic in $D$ so, by
  \Lemref{geodcurve}, no such subpath may have initial and terminal
  edges $e$ and $e'$.  Hence $e$ and $e'$ are antipodal in $\bd D$ and
  every dual curve starting at some edge in $\bd D$ ends at the
  antipodal edge in $\bd D$.  Then all such dual curves pairwise
  intersect and so, by \Propref{nonogon}, there are at most two such
  dual curves.  This implies that $|\gamma| \le 4$.

  (\itmref{brgd}) $\Rightarrow$ (\itmref{sxcc}) By \Lemref{fbsc}, our
  square complex $X$ is simply connected.  An embedded $6$-cycle of a
  bipartite graph that is not isometrically embedded must have a
  diagonal.  Hence every embedded $6$-cycle in $X^1$ has a diagonal.

  (\itmref{sxcc}) $\Rightarrow$ (\itmref{quad}) Simple connectedness
  of $X$ implies that $X^1$ is bipartite.  Conditions
  \defnitmref{quadric}{imm} and \defnitmref{quadric}{piec3} are
  satisfied by $4$-flagness and because $\sk{1}{X}$ is simplicial.
  Since, additionally, $\sk{1}{X}$ is bipartite, any immersed
  $4$-cycle or $6$-cycle in $\sk{1}{X}$ must be embedded.  Then
  Condition~\defnitmref{quadric}{rep2} follows by $4$-flagness and
  Condition~\defnitmref{quadric}{rep3} follows because every embedded
  $6$-cycle has a diagonal, which splits the $6$-cycle into two
  embedded $4$-cycles joined along the diagonal.
\end{proof}

Compare \Propref{mgchar} with the fact that a simplicial complex is
systolic if and only if it is flag (in the usual sense) and its
$1$-skeleton is $3$-bridged \cite{Chepoi:2000}.

The \defterm{$4$-flag completion} $\overline\Gamma$ of a graph $\Gamma$
is the square complex obtained by gluing a unique square to each
embedded $4$-cycle of $\Gamma$.  \Propref{mgchar} states that the $4$-flag
completion of a $4$-bridged graph is quadric and that every quadric
complex can be obtained in this way.  In other words, the map
$X \mapsto \sk{1}{X}$ is a bijection from the class of quadric
complexes to the class of $4$-bridged graphs with inverse
$\ff{\Gamma} \mapsfrom \Gamma$.

\subsection{Balls in the \texorpdfstring{$1$-Skeleton}{1-Skeleton}}
\sseclabel{ballsisom}

Balls in the $1$-skeleton of a quadric complex are not generally
convex (e.g. consider the standard square tiling of the plane).  In
fact, there exist quadric complexes wherein the convex hull of a ball
of radius $1$ is infinite, as in the following example.

\begin{ex}
  \exlabel{convhull} Let $\Gamma$ be the standard Cayley graph of
  $\Z^2$.  Let $\bar \Gamma$ be the quotient of $\Gamma$ by the $\Z$
  action given by $k \cdot (m,n) = (m+2k,n+2k)$.  Then the $4$-flag
  completion of $\bar \Gamma$ is simply connected and checking that
  every $6$-cycle has a diagonal we see, by \Propref{mgchar}, that
  $\bar \Gamma$ is $4$-bridged.  Then $\bar \Gamma$ is the
  $1$-skeleton of a quadric complex, by \Propref{mgchar}.  The path of
  length $4$ of $\Gamma$ given by $(0,0),(1,0),(1,1),(2,1),(2,2)$
  projects to a $4$-cycle in $\bar \Gamma$ whose convex hull is all of
  $\bar \Gamma$.
\end{ex}

\Exref{convhull} contrasts sharply with the case of the $1$-skeleton
of a systolic complex, where a neighborhood of any convex subgraph is
convex.  Balls in the $1$-skeleton of quadric complexes are
isometrically embedded, however, and this fact is crucial in our proof
in \Secref{invbc} of the Invariant Biclique Theorem.

A ball $B_r(v)$ of radius $r$ centered at a vertex $v$ of a graph
$\Gamma$ is the full subgraph on the set of all vertices of $\Gamma$
at distance at most $r$ to $v$.  The following lemma is an easy
corollary of a theorem of Bandelt \cite[Theorem~2(ii)]{Bandelt:1988}.
We give a disc diagrammatic proof.

\begin{lem}
  \lemlabel{ballsisom} Let $X$ be a quadric complex.  The balls of its
  $1$-skeleton $\sk{1}{X}$ are isometrically embedded subgraphs.
\end{lem}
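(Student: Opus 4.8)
The plan is to prove the slightly stronger statement that for any two vertices $u$ and $w$ of a ball $B_r(v)$ there is a geodesic from $u$ to $w$ all of whose vertices lie in $B_r(v)$, by induction on $d = |u,w|$. The cases $d \le 1$ are immediate, since $B_r(v)$ is a full subgraph. For the inductive step it suffices to produce a single vertex $m$ adjacent to $u$ that lies on a geodesic from $u$ to $w$ and satisfies $|v,m| \le r$: then $m,w \in B_r(v)$ with $|m,w| = d-1$, the inductive hypothesis supplies a geodesic from $m$ to $w$ inside $B_r(v)$, and prepending the edge $um$ gives the desired geodesic. I will call such an $m$ a \emph{good first step}.

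After possibly interchanging $u$ and $w$ I may assume $|v,u| \ge |v,w|$, and if $|v,u| \le r-1$ then any geodesic first step $m$ from $u$ toward $w$ satisfies $|v,m| \le |v,u|+1 \le r$ and is good; so I may assume $|v,u| = r$. I then pick geodesics $[v,u]$, $[u,w]$, $[w,v]$ and fill the closed triangle they bound by a minimal area, hence $\CAT(0)$, disc diagram $D$ (using \Lemref{minardd} and the remark after it). Since the sides are geodesics of $\sk{1}{X}$ and $D \to X$ is $1$-Lipschitz, each side is geodesic in $D$ and realises the corresponding distance there. By \Lemref{geodcurve} and \Propref{nonogon} each dual curve of $D$ has exactly two endpoints on $\bd D$ and meets each side at most once, so it joins two of the three sides; let $a,b,c$ count the curves joining $[v,u]$ to $[u,w]$, $[u,w]$ to $[w,v]$, and $[v,u]$ to $[w,v]$, respectively. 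By \Propref{dcdistance}, $|v,u|=a+c$, $|v,w|=b+c$, $|u,w|=a+b$, so $a\ge b$ and $2a\ge d\ge 1$; in particular a \emph{type-$a$} curve exists. A type-$a$ curve separates $u$ from both $v$ and $w$, so an edge at $u$ dual to such a curve leads to a good first step, its far endpoint being one closer to both $v$ and $w$ in $D$.

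The crux is therefore to show that some type-$a$ curve is incident to $u$. Fix a type-$a$ curve $\alpha$ and let $R$ be the closed region on the $u$-side of $\alpha$, bounded by $\alpha$ together with the initial segments of $[v,u]$ and $[u,w]$ at $u$. Any dual curve of type $b$ or $c$ incident to $u$ begins in $R$ but has an endpoint on $[w,v]$, which lies outside $R$; as it meets $[v,u]$ and $[u,w]$ at most once, it can leave $R$ only by crossing $\alpha$. Suppose no type-$a$ curve is incident to $u$. If the first edges of $[v,u]$ and $[u,w]$ coincide, their common endpoint is already a good first step, so assume they differ. Using \Lemref{pospos} and the hypothesis $|v,w|\le|v,u|$, one rules out $u$ being a cutpoint of $D$ (a cutpoint separating $v$ from $w$ would route $[w,v]$ through $u$, forcing $|v,w|=|v,u|+d>|v,u|$, while a cutpoint with $v,w$ on one side contradicts minimality of $D$); hence $u$ is incident to a square. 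The two edges of that square at $u$ have dual curves crossing inside the square, and by assumption both are of type $b$ or $c$, so both cross $\alpha$. Then $\alpha$ and these two curves pairwise intersect, forming a triangle and contradicting \Propref{nonogon}.

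I expect the main obstacle to be precisely this last step—securing a good first step when $|v,u|=r$—together with disposing cleanly of the degenerate corner configurations (coincident initial edges and cutpoints). The idea that makes it go through is the prohibition of dual-curve triangles in \Propref{nonogon}: it turns the combinatorial question ``does a descending direction exist?'' into the observation that two curves crossing at a square, both forced across $\alpha$, would build a forbidden triangle. The role of the metric hypothesis $|v,w|\le|v,u|$ is exactly to exclude the cutpoint case that would otherwise break the argument.
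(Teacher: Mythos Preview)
Your proof is correct and takes a genuinely different route from the paper's.

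The paper fixes geodesics $\alpha = [v,a]$ and $\beta = [v,b]$, then minimises the area of $D$ over \emph{both} the choice of geodesic $\gamma = [a,b]$ and the filling $D$. This extra freedom is what drives their argument: minimality forces every interior vertex of $\gamma$ to have curvature at most $0$, and from this they show directly that no two dual curves emanating from $\gamma$ can cross (an innermost pair would meet in a square whose corner on $\gamma$ has curvature $\pi/2$). Hence the curves from $\gamma$ split into a block ending on $\alpha$ followed by a block ending on $\beta$, and every vertex of $\gamma$ is seen at once to lie in $B_r(v)$.

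You instead keep the three geodesics fixed, minimise only over $D$, and proceed by induction on $d$: all you need is a single good first step at the far corner $u$. The no-triangle clause of \Propref{nonogon} does the work: if no edge at $u$ is dual to a type-$a$ curve, then for any square at $u$ the two dual curves through it are of type $b$ or $c$, hence both must cross a fixed type-$a$ curve $\alpha$ on their way to $[w,v]$, and together with their crossing in the square this produces a forbidden triangle. (Note that ``incident to $u$'' here must mean ``dual to \emph{any} edge at $u$'', including interior edges, for the triangle argument to apply to an arbitrary square at $u$; your phrasing supports this reading, and the good-first-step conclusion holds for interior edges as well via \Propref{dcdistance}.)

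What each buys: the paper's double minimisation yields the whole geodesic inside the ball in one stroke, at the cost of varying $\gamma$. Your approach is more local and never changes the chosen geodesics; it isolates exactly the combinatorial obstruction (a dual-curve triangle at the corner) and dispatches it with \Propref{nonogon}. A couple of points deserve a sentence more of care in a final write-up: the cutpoint case is cleanest by observing that $\partial_p D$ visits $u$ exactly once (each geodesic side is embedded in $D$ and $[w,v]$ cannot pass through $u$), which immediately excludes $u$ being a cutpoint; and the passage from ``one closer in $D$'' to ``one closer in $X$'' uses that the sides realise the same length in $D$ and in $X$ together with the $1$-Lipschitz map $D\to X$.
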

\begin{proof}
  Let $B_r(v)$ be the ball of radius $r$ centered at some vertex $v$
  of $X$.  Let $a$ and $b$ be vertices of $B_r(v)$ and take geodesics
  $\alpha$ from $v$ to $a$ and $\beta$ from $v$ to $b$.  For each
  geodesic $\gamma$ of $\sk{1}{X}$ from $a$ to $b$ there is a minimal
  area disc diagram $D$ in $X$ with boundary path
  $\pbd D = \alpha \gamma \beta^{-1}$.  Choose $(\gamma, D)$
  minimizing the area of $D$.  Minimality implies that no interior
  vertex of $\gamma$ has curvature $\frac{\pi}{2}$.  By
  \Lemref{geodcurve}, no dual curve of $D$ both starts and ends on one
  of $\alpha$, $\beta$, or $\gamma$.

  We claim that no two dual curves starting on $\gamma$ cross.
  Suppose there exist such pairs of dual curves and choose a pair
  $\delta_1$ and $\delta_2$ minimizing the number of vertices bounded
  by $\gamma$, $\delta_1$ and $\delta_2$.  Then $\delta_1$ and
  $\delta_2$ start at adjacent edges $e_1$ and $e_2$ of $\gamma$.  Let
  $s$ be the square in which the $\delta_i$ cross.  Let $N_1$ and
  $N_2$ be the initial segments of the carriers of $\delta_1$ and
  $\delta_2$, starting at $\gamma$ and containing all squares up until
  but not including $s$.  By \Propref{carrier}, the parallel paths of
  $N_1$ and $N_2$ starting at $e_1 \cap e_2$ coincide.  So if the
  $N_i$ have nonzero length then the final vertex of this common
  parallel path is an interior vertex of $D$ incident to exactly three
  squares: the final squares of the $N_i$ and $s$.  This contradicts
  the $\CAT(0)$ condition.  So the $N_i$ have length $0$ and $s$
  contains $e_1$ and $e_2$.  But then the vertex $e_1 \cap e_2$ has
  curvature $\frac{\pi}{2}$, contradicting the minimality of
  $(\gamma, D)$.

  Therefore, $\gamma = \gamma_\alpha \gamma_\beta$ is the disjoint
  union of two subpaths $\gamma_\alpha$ and $\gamma_\beta$ where all
  dual curves of $D$ starting on $\gamma_\alpha$ end on $\alpha$ and
  all those staring on $\gamma_\beta$ end on $\beta$.  It follows, by
  \Propref{dcdistance}, that every vertex $u$ in the interior of
  $\gamma$ is closer to $v$ than $a$ (if $u \in \gamma_\alpha$) or $b$
  (if $u \in \gamma_\beta$).  That is, $\gamma \subset B_r(v)$.
\end{proof}

\subsection{Two Results of Hanlon and Martinez-Pedroza}
\sseclabel{hanped}

We end this section by stating two theorems of Hanlon and
Martinez-Pedroza that apply to quadric complexes and groups.
\begin{thm}[{\cite[Theorem 1.1]{Hanlon:2014}}]
  \thmlabel{fpsg} Let $\catname{C}$ be any category of complexes
  closed under taking full subcomplexes and covering spaces.  The
  category of groups acting properly and cocompactly on simply
  connected complexes in $\catname{C}$ is closed under taking finitely
  presented subgroups.
\end{thm}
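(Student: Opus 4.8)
The plan is to produce, from the finitely presented subgroup $H \le G$, a simply connected complex $Z \in \catname{C}$ carrying a proper and cocompact $H$-action; membership of $H$ in the stated category then holds by definition. Fix a finite presentation $H = \gpres{S}{R}$ and a basepoint vertex $v$ of $X$. For each generator $s \in S$ choose an edge path $p_s$ in $X$ from $v$ to $sv$. For each relator $r \in R$, the concatenation of the appropriate $H$-translates of the $p_s$ is a loop based at $v$ (since $r = 1$ in $G$), hence nullhomotopic in the simply connected complex $X$, and so bounds a disc diagram $D_r \to X$. Let $K$ be a connected compact subcomplex of $X$ containing $v$ together with the images of all the $p_s$ and all the $D_r$; this compact ``seed'', distilled from the finite presentation, is where finite presentability of $H$ is used.

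First I would extract the easy properties of the $H$-invariant full subcomplex $Z$ spanned by the orbit $H \cdot \sk{0}{K}$. Since $Z$ is a full subcomplex of $X$ and $\catname{C}$ is closed under full subcomplexes, $Z \in \catname{C}$. The action of $H$ on $Z$ is the restriction of the proper $G$-action on $X$, so it is proper. For cocompactness, note that $H \cdot K$ is already an $H$-cocompact subcomplex, and $Z$ is obtained from it by adjoining, near each translate, only the cells needed to make it full; cocompactness of the ambient $G$-action bounds the number of $H$-orbits of such cells, so $H \backslash Z$ is compact. Thus every required property of $Z$ holds except, possibly, simple connectivity.

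The crux is arranging $Z$ to be simply connected while staying inside $\catname{C}$ and retaining the proper cocompact $H$-action. The filled relators guarantee that the inclusion $K \hookrightarrow X$ does not see the relations of $H$, but a loop of $Z$ bounds only in $X$ and need not bound inside $Z$, so $\pi_1(Z)$ may be nontrivial. To kill it I would run an equivariant tower argument: alternately pass to connected covers---legitimate because $\catname{C}$ is closed under covering spaces, choosing at each stage a cover to which the $H$-action lifts---and to $H$-invariant full subcomplexes containing the image of the seed $K$---legitimate by full-subcomplex closure---with every step taken $H$-equivariantly. Because the seed has finitely many cells, a complexity such as the number of core cells meeting the image of $K$ strictly drops at each nontrivial stage and is bounded below, so the tower terminates; at its top one obtains a simply connected complex in $\catname{C}$ on which $H$ still acts, and properness and cocompactness are inherited step by step.

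The main obstacle is precisely this last step: guaranteeing both that the tower terminates and that each covering taken is one to which the given action genuinely lifts, so that the group acting at the top is $H$ itself rather than some extension of $H$ by $\pi_1(Z)$. (Taking the universal cover of $Z$ naively would only exhibit such an extension in the target category, not $H$.) Handling this requires carefully tracking the $H$-action---which need not be free---through each cover and each full-subcomplex passage, and using cocompactness to bound the procedure. Once termination and equivariance are secured, the proper cocompact $H$-action on the simply connected top complex in $\catname{C}$ witnesses that $H$ lies in the stated category, completing the argument.
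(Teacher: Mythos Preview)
The paper does not contain a proof of this theorem; it is quoted verbatim as \cite[Theorem~1.1]{Hanlon:2014} and attributed entirely to Hanlon and Martinez-Pedroza. The only information the paper offers about its proof is the remark that Hanlon and Martinez-Pedroza deduce it from their Theorem~4.1, which produces an $H$-equivariant $\mathscr{F}$-tower $X' \to X$, where an $\mathscr{F}$-tower is a composition of covering maps and inclusions of full subcomplexes. So there is no proof in the paper to compare against; that said, your outline is consonant with the tower strategy the paper alludes to: build a compact seed from a finite presentation of $H$, pass to the $H$-invariant full subcomplex it spans, and then alternate covers and full-subcomplex passages equivariantly until simple connectivity holds.

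Your sketch correctly identifies the architecture but, as you yourself acknowledge, the substance lies precisely in the two points you leave open: termination of the tower and lifting of the $H$-action through each cover. The complexity you propose (``number of core cells meeting the image of $K$'') is not made precise enough to verify a strict decrease at each nontrivial stage, and lifting a possibly non-free action through an intermediate cover---rather than the universal one---needs an argument you do not supply. These are exactly the technical points to which the Hanlon--Martinez-Pedroza paper is devoted, so your proposal should be read as a correct identification of the strategy rather than a self-contained proof.
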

\begin{cor}
  A finitely presented subgroup of a quadric group is quadric.
\end{cor}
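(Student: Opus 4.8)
The plan is to invoke \Thmref{fpsg} directly, taking the category $\catname{C}$ to be the class of locally quadric complexes (with combinatorial maps as morphisms). First I would check the single hypothesis of that theorem, namely that $\catname{C}$ is closed under taking full subcomplexes and covering spaces. This is exactly the content of the proposition established immediately after \Defnref{quadric}, so no additional argument is required at this point.

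Next I would match up the objects in play. By definition a quadric complex is precisely a simply connected locally quadric complex, that is, a simply connected object of $\catname{C}$. Consequently the class of groups acting properly and cocompactly on simply connected complexes in $\catname{C}$ is, by the very definition of a quadric group, exactly the class of quadric groups. Applying \Thmref{fpsg} to $\catname{C}$ then asserts that this class is closed under passing to finitely presented subgroups, which is the desired conclusion: any finitely presented subgroup of a quadric group is again quadric.

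The main, and indeed only, point requiring care is the bookkeeping of translating the categorical setup of \Thmref{fpsg} into our situation, namely confirming that the closure properties proved for locally quadric complexes are literally the closure hypotheses demanded by the theorem, and that ``simply connected complex in $\catname{C}$'' coincides with ``quadric complex.'' Since both of these follow immediately from the definitions together with the stated closure proposition, there is no genuine obstacle; the corollary is a direct specialization of the general theorem of Hanlon and Martinez-Pedroza.
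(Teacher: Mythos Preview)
Your proposal is correct and is exactly the argument the paper intends: the corollary is stated immediately after \Thmref{fpsg} with no separate proof, and your write-up simply makes explicit the bookkeeping (taking $\catname{C}$ to be locally quadric complexes, invoking the closure proposition after \Defnref{quadric}, and noting that simply connected locally quadric complexes are precisely quadric complexes).
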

\Thmref{fpsg} generalizes a theorem of Wise that any finitely
presented subgroup of the fundamental group of a ``pure
$(p,q)$-complex'' is the fundamental group of a pure $(p,q)$-complex,
which implies that finitely presented subgroups of torsion-free
systolic groups are systolic \cite{Wise:2003}.

In order to prove \Thmref{fpsg}, Hanlon and Martinez-Pedroza use the
following theorem, which we use in the proof of the Invariant Biclique
Theorem.
\begin{thm}[{\cite[Theorem 4.1]{Hanlon:2014}}]
  If $G$ is a group acting properly on a simply connected locally
  finite complex $X$ and $H$ is a finitely presented subgroup of $G$,
  then $H$ acts cocompactly on a simply connected complex $X'$ that
  maps $H$-equivariantly into $X$ through an $\mathscr{F}$-tower
  $X' \to X$.
\end{thm}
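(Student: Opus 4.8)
The plan is to prove this by an equivariant tower argument, building $X'$ as the domain of a finite composition of covering maps and full-subcomplex inclusions over $X$. First I would exploit the finite presentation of $H$ to produce a compact carrier. Fix a base vertex $x_0 \in X$. For each generator $h_i$ of $H$ choose an edge path $p_i$ from $x_0$ to $h_i x_0$, and for each relator $r = h_{i_1}\cdots h_{i_k}$ form the loop obtained by concatenating the translated paths $p_{i_1}, h_{i_1}p_{i_2}, \ldots$; since $X$ is simply connected this loop is nullhomotopic and so bounds a disc diagram $D_r \to X$. Let $C$ be the compact connected union of the $p_i$ and the images of the $D_r$, and set $K = H \cdot C$. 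Then $K$ is an $H$-invariant, $H$-cocompact subcomplex of $X$, and after replacing it by the full subcomplex of $X$ on its vertex set I obtain the first, subcomplex-inclusion, stage of the tower.

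Next I would run an induction that kills $\pi_1$ without destroying cocompactness. If $K$ is simply connected we are finished. Otherwise, since the inclusion $K \hookrightarrow X$ factors through the simply connected complex $X$, every loop of $K$ dies in $X$; I would choose a nontrivial $H$-invariant normal subgroup $N \trianglelefteq \pi_1(K)$ contained in the kernel of $\pi_1(K)\to\pi_1(X)$ and pass to the corresponding cover $\widehat K \to K$. Because $N$ is $H$-invariant the $H$-action lifts to $\widehat K$, the composite $\widehat K \to K \hookrightarrow X$ is $H$-equivariant, and I would then restrict to an $H$-cocompact full subcomplex of $\widehat K$ containing a lift of $C$. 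This appends a covering stage followed by an inclusion stage to the tower and leaves the inductive hypotheses intact: a proper $H$-action, $H$-cocompactness, an equivariant map to $X$, and finite presentation of $H$.

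To finish I would show the induction terminates and then assemble the tower. Termination should follow from a complexity that strictly decreases at each unwrapping --- for instance the number of cells in a fundamental domain for the $H$-action, or a Betti-number count of the compact core --- so that after finitely many stages the core becomes simply connected. This core, with its residual $H$-action, is the desired $X'$, and the composite of all the inclusion and covering stages is an $\mathscr{F}$-tower $X' \to X$. Since the class of locally quadric complexes is closed under full subcomplexes and covering spaces, this is exactly the form needed to transport quadricity from $X$ to $X'$.

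The main obstacle is coordinating equivariance, cocompactness, and termination simultaneously. The delicate point is guaranteeing that the subgroup $N$ unwrapped at each stage can be taken at once $H$-invariant, so that the cover is genuinely $H$-equivariant, and large enough to make real progress toward simple connectivity. Moreover, because the $G$-action is only assumed proper rather than free, $H$ may have torsion, so the quotient $K/H$ is an orbi-complex and the bookkeeping of the $\pi_1$-data, the chosen covers, and the complexity must be carried out through the development and complex-of-groups structure rather than through honest quotient complexes. Establishing that the complexity strictly decreases under these equivariance constraints, and hence that the tower has finite length, is where the real work lies.
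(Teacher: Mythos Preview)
This theorem is not proved in the paper at all: it is quoted verbatim as \cite[Theorem~4.1]{Hanlon:2014} and used as a black box (only its special case, \Corref{fincore}, is invoked in the proof of the Invariant Biclique Theorem). There is therefore no ``paper's own proof'' to compare your attempt against.

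That said, your outline is in the right spirit --- Hanlon and Martinez-Pedroza do build $X'$ as the top of a finite tower of covers and full-subcomplex inclusions, starting from an $H$-cocompact subcomplex obtained from paths for generators and discs for relators. The genuine gap in your sketch is exactly the one you flag yourself: termination. Neither of the complexities you propose (cell count of a fundamental domain, Betti numbers of a compact core) is shown to strictly decrease when you pass to a cover and then restrict to a cocompact full subcomplex; indeed, a cover can easily enlarge a fundamental domain, and restricting to a smaller full subcomplex can reintroduce $\pi_1$. In the actual argument one carefully controls which loops are killed at each stage relative to a fixed finite presentation of $H$, so that a specific finite set of obstructions shrinks; without that bookkeeping the induction has no reason to stop. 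Your sketch also leaves the choice of the $H$-invariant normal subgroup $N$ entirely unspecified, and this choice is what drives the whole process. If you want to turn this into a proof you should consult the cited paper rather than the present one.
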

An \newterm{$\mathscr{F}$-tower} is a composition of covering maps and
inclusions of full subcomplexes and so, if $X$ is locally quadric,
then $X'$ is quadric.  We need only the following special case of the
theorem.
\begin{cor}
  \corlabel{fincore} If $G$ is a finite group acting on a simply
  connected locally finite quadric complex $X$, then $G$ acts on a
  finite quadric complex $X'$ that immerses $G$-equivariantly into
  $X$.
\end{cor}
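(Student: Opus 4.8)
The plan is to specialize the preceding theorem of Hanlon and Martinez-Pedroza to the case $H = G$ and then unwind the structure of the resulting $\mathscr{F}$-tower. First I would verify the hypotheses. A finite group is finitely presented, so we may legitimately take $H = G$. Moreover, any action of a finite group is automatically proper: for a finite group $G$ and any compact subset $K$ of $X$ there are at most $|G|$ elements $g \in G$ with $gK \cap K \neq \emptyset$ (indeed all cell stabilizers, being subgroups of $G$, are finite). Thus $G$ acts properly on the simply connected locally finite complex $X$, and the theorem applies to produce a simply connected complex $X'$ carrying a cocompact $G$-action together with a $G$-equivariant map $X' \to X$ that factors as an $\mathscr{F}$-tower.

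Next I would extract finiteness of $X'$ from cocompactness. Since an $\mathscr{F}$-tower is a composition of covering maps and inclusions of full subcomplexes, and each of these operations preserves local finiteness, the complex $X'$ is locally finite. A cocompact action of a finite group means $X'/G$ is compact; because $G$ is finite this forces $X'$ itself to be compact. A compact, locally finite complex has only finitely many cells, so $X'$ is finite.

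It then remains to observe that the map $X' \to X$ is an immersion and that $X'$ is quadric. Covering maps and inclusions of full subcomplexes are each immersions, so their composition $X' \to X$ is an immersion, and it is $G$-equivariant by the theorem. Finally, $X$ is locally quadric and local quadricity is preserved under passing to covering spaces and to full subcomplexes, so the same $\mathscr{F}$-tower remark already recorded above gives that $X'$, being simply connected in addition, is quadric. This yields a finite quadric complex $X'$ immersing $G$-equivariantly into $X$, as required.

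Since the statement is essentially a repackaging of the cited theorem, I do not expect a deep obstacle. The one point that genuinely requires care is the passage from \emph{cocompact} to \emph{finite}: the theorem only guarantees a cocompact $G$-action on $X'$, and one must separately confirm that $X'$ inherits local finiteness through the $\mathscr{F}$-tower in order to conclude that $X'$ is a bona fide finite complex rather than merely a compact space.
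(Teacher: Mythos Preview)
Your proposal is correct and follows exactly the approach the paper intends: the paper does not give a separate proof of this corollary but simply records it as the special case $H = G$ of the Hanlon--Martinez-Pedroza theorem, together with the observation that $\mathscr{F}$-towers preserve the locally quadric property. You have supplied the details the paper leaves implicit, in particular the passage from a cocompact $G$-action on $X'$ to finiteness of $X'$ via local finiteness inherited through the tower, which is precisely the point that deserves care.
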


\section{The Invariant Biclique Property}
\seclabel{invbc}

We first prove the Invariant Biclique Theorem for finite quadric
complexes and then generalize this result to locally finite quadric
complexes by applying \Corref{fincore} of Hanlon and Martinez-Pedroza.
We prove the finite version of the theorem by showing that finite
$4$-bridged graphs are bi-dismantlable---a theorem of Bandelt
\cite[Theorem~2]{Bandelt:1988}---and that each such graph has a
biclique invariant under all of its automorphisms.  Our proof of the
latter, in \Ssecref{dismantle}, follows that in Hensel et al.
\cite{Hensel:2014} showing that dismantlable graphs have invariant
cliques---original proved by Polat \cite{Polat:1993}.  In
\Ssecref{bfs} we apply the breadth-first search algorithm to show that
$4$-bridged are bi-dismantlable, a technique used by Chepoi
\cite{Chepoi:1997} to give an alternate proof of the theorem of Anstee
and Farber \cite{Anstee:1988} that bridged graphs are dismantlable.

\subsection{Dismantling Bipartite Graphs}
\sseclabel{dismantle}

A \defterm{metric sphere} $S_r(u)$ of radius $r$ centered at a vertex
$u$ in a graph $\Gamma$ is the full subgraph on the set of vertices of
$\Gamma$ at distance $r$ from $u$.  If $\Gamma$ is bipartite and
$S_r(u)$ is a metric sphere in $\Gamma$ then $S_r(u)$ has no edges.
The neighbours of any vertex $v \in S_r(u)$ are in $S_{r-1}(u)$ and
$S_{r+1}(u)$.

\begin{defn}
  \defnlabel{dismantle} Let $\Gamma$ be a finite bipartite simplicial
  graph.  If $u$ and $v$ are distinct vertices of $\Gamma$ then $u$ is
  \defterm{bi-dominated} by $v$ if every neighbour of $u$ is a
  neighbour of $v$, i.e., if there is a containment
  $S_1(u) \subset S_1(v)$ of metric spheres.

  $\Gamma$ is \defterm{bi-dismantlable} if there exists a sequence
  \[\Gamma = \Gamma_1, \Gamma_2, \ldots, \Gamma_n\] of graphs ending on
  a biclique such that, for each $i < n$,
  $\Gamma_{i+1} = \Gamma_i \setminus v_i$ for some $v_i$ bi-dominated
  in $\Gamma_i$.  In other words, $\Gamma$ is bi-dismantlable if we can
  obtain a biclique from $\Gamma$ by successively removing bi-dominated
  vertices.
\end{defn}

These definitions are modified from the standard ones, which require a
dominated vertex to be a neighbour of its dominator and require a
dismantlement to end on a clique.  The modification is necessary to
work with bipartite graphs which, by the original definitions, are
dismantlable only in the case of trees.

Note that if $u$ bi-dominates $v$ in $\Gamma$ then $u$ bi-dominates $v$ in
every full subgraph of $\Gamma$ containing $u$ and $v$.  The
bi-domination relation is transitive.  If $u$ bi-dominates $v$ which
bi-dominates $w$ then $u$ bi-dominates $w$.

At each step of a bi-dismantlement of a graph, there may be several
bi-dominated vertices that could potentially be removed.  The following
lemma and proof, which are adapted from Lemma~2.5 of Hensel et
al. \cite{Hensel:2014}, show that these choices can be made
arbitrarily.

\begin{lem}
  \lemlabel{greedism} If $\Gamma$ is a finite bi-dismantlable
  simplicial graph and a vertex $v$ is bi-dominated in $\Gamma$ by a
  vertex $u$, then $\Gamma \setminus v$ is bi-dismantlable.
\end{lem}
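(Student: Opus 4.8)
The plan is to induct on the number of vertices of $\Gamma$. If $\Gamma$ is already a biclique then, since $v$ is bi-dominated, it must lie in a part of size at least two (the only bi-dominated vertices of a biclique are those sharing a part with another vertex, as a vertex in the opposite part has a neighbourhood disjoint from theirs), so $\Gamma \setminus v$ is again a biclique and we are done. Otherwise, fix a bi-dismantlement $\Gamma = \Gamma_1, \ldots, \Gamma_n$ and let $v_1$ be the first vertex removed, bi-dominated in $\Gamma$ by some $w_1$. The tail $\Gamma_2, \ldots, \Gamma_n$ witnesses that $\Gamma_2 = \Gamma \setminus v_1$ is bi-dismantlable, and $|V(\Gamma_2)| < |V(\Gamma)|$, which is what feeds the induction.

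If $v_1 = v$ then $\Gamma \setminus v = \Gamma_2$ and we are immediately done, so assume $v_1 \neq v$. The strategy is then to remove $v_1$ first from $\Gamma \setminus v$ and to recognize the result as $\Gamma_2 \setminus v$, to which the induction hypothesis applies. Two things must be checked: (i) that $v$ is still bi-dominated in $\Gamma_2$, so that the inductive hypothesis is genuinely available for $\Gamma_2$; and (ii) that $v_1$ is bi-dominated in $\Gamma \setminus v$, so that the passage $\Gamma \setminus v \to (\Gamma \setminus v)\setminus v_1 = \Gamma_2 \setminus v$ is a legitimate first dismantling move. Both checks rest on the two elementary facts recorded just before the statement: bi-domination persists to any full subgraph containing both vertices, and bi-domination is transitive.

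For (ii): if $w_1 \neq v$ then $w_1$ survives in $\Gamma \setminus v$ and still bi-dominates $v_1$ there; if $w_1 = v$ then, since $v$ is bi-dominated by $u$, transitivity gives that $u$ bi-dominates $v_1$, and $u$ persists in $\Gamma \setminus v$. For (i): if $v_1 \neq u$ then $u$ persists in $\Gamma_2$ and continues to bi-dominate $v$; if $v_1 = u$ then from $u$ being bi-dominated by $w_1$ and $v$ being bi-dominated by $u$ we get, by transitivity, that $w_1$ bi-dominates $v$, and $w_1$ persists in $\Gamma_2 = \Gamma \setminus u$ provided $w_1 \neq v$.

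The one genuinely delicate configuration—and the step I expect to be the main obstacle—is when these coincidences collapse to the twin case $v_1 = u$ and $w_1 = v$, which forces $S_1(u) = S_1(v)$; here the argument above fails to produce a dominator for $v$ in $\Gamma_2$. I would dispose of this case separately: since $u$ and $v$ have identical neighbourhoods (and, $\Gamma$ being bipartite, are non-adjacent and lie in the same part), the transposition exchanging $u$ and $v$ and fixing every other vertex is an automorphism of $\Gamma$ carrying $\Gamma \setminus v$ isomorphically onto $\Gamma \setminus u = \Gamma_2$. As bi-dismantlability is an isomorphism invariant and $\Gamma_2$ is bi-dismantlable, so is $\Gamma \setminus v$. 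Assembling the cases, removing $v_1$ from $\Gamma \setminus v$ and then prepending this move to the dismantlement of $\Gamma_2 \setminus v$ supplied by the induction hypothesis completes the argument.
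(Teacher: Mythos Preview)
Your proof is correct and follows essentially the same approach as the paper's: induct (you on $|V(\Gamma)|$, the paper on the length $n$ of a fixed dismantling sequence), reduce to checking that $v_1$ is bi-dominated in $\Gamma\setminus v$ and that $v$ is bi-dominated in $\Gamma_2$, and dispose of the residual twin case $S_1(u)=S_1(v)$ via the isomorphism $\Gamma\setminus v\cong\Gamma\setminus u=\Gamma_2$. Your case analysis (splitting on whether $w_1=v$ and whether $v_1=u$) is a bit more explicit where the paper argues by contradiction from the failure of one of the two checks, but the content is the same.
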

\begin{proof}
  Let $\Gamma = \Gamma_1, \Gamma_2, \ldots, \Gamma_n$ and $v_1, v_2,
  \ldots, v_n$ be as in \Defnref{dismantle}.  If $n = 1$ then $\Gamma$
  is a biclique and then so is $\Gamma \setminus v$ so that we are
  done.  Also, if $v_1 = v$ then $\Gamma \setminus v = \Gamma_2$,
  which we know is bi-dismantlable.  So we may assume that $n > 1$ and
  that $v \neq v_1$.
  
  If $v_1$ is bi-dominated in $\Gamma \setminus v$ then we can remove it
  to obtain $\Gamma_2 \setminus v$.  And if $v$ is bi-dominated in
  $\Gamma_2$ then $\Gamma_2 \setminus v$ is bi-dismantlable by
  induction on $n$.  So if both of these conditions hold, then $\Gamma
  \setminus v$ is bi-dismantlable and we are done.  We assume that
  either: (I) $v_1$ is not bi-dominated in $\Gamma \setminus v$ or (II)
  $v$ is not bi-dominated in $\Gamma_2$.
  
  \paragraph{Case I: $v_1$ is not bi-dominated in $\Gamma \setminus v$}
  Then $v$ must have been the only bi-dominator of $v_1$ in $\Gamma$.  It
  follows that $v_1 = u$ (otherwise $u$ bi-dominates $v_1$ in $\Gamma
  \setminus v$).

  \paragraph{Case II: $v$ is not bi-dominated in $\Gamma_2$} Then $v_1 =
  u$ and the bi-dominator of $v_1$ in $\Gamma$ must be $v$ (or else it
  bi-dominates $v$ in $\Gamma_2$).
  
  In either case we find that $v$ bi-dominates $u$ in $\Gamma$ and that
  $v_1 = u$.  But $u$ bi-dominates $v$ in $\Gamma$ so $u$ and $v$ have
  the same set of neighbours.  Hence $\Gamma \setminus v \isomor
  \Gamma \setminus u = \Gamma \setminus v_1 = \Gamma_2$, which is
  bi-dismantlable.
\end{proof}

The following theorem is the main result of this section, the
invariant biclique theorem for finite bi-dismantlable graphs.  Its
proof is adapted from Theorem~2.4 of Hensel et al. \cite{Hensel:2014}.

\begin{thm}
  \thmlabel{disminvbc} Let $\Gamma$ be a finite bi-dismantlable
  simplicial graph that is not a single vertex.  Then the automorphism
  group $G$ of $\Gamma$ stabilizes a biclique of $\Gamma$.
\end{thm}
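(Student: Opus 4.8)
The plan is to argue by induction on the number of vertices $\lvert V(\Gamma)\rvert$. The base case is when $\Gamma$ is itself a biclique: then $G$ stabilizes $\Gamma$, which is the required biclique. So suppose $\Gamma$ is not a biclique. Since $\Gamma$ is bi-dismantlable, its first dismantling step exhibits a bi-dominated vertex, and since automorphisms preserve adjacency the set of bi-dominated vertices is $G$-invariant. I would split into two regimes according to whether $\Gamma$ has a pair of distinct vertices with equal neighbourhoods, $S_1(u)=S_1(v)$ (false twins).

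In the twin-free regime the bi-domination relation is a genuine partial order on $V(\Gamma)$ (antisymmetry is exactly the absence of twins), so, $V(\Gamma)$ being finite, every bi-dominated vertex lies strictly below a maximal vertex, which is then non-bi-dominated. Fix a bi-dominated vertex $v$ and let $O=Gv$; every vertex of $O$ is bi-dominated. I would delete the vertices of $O$ one at a time. For $w\in O$ a maximal dominator $u$ is non-bi-dominated, hence $u\notin O$ and $u$ survives every deletion, while the containment $S_1(w)\subseteq S_1(u)$ persists under vertex deletion; thus $w$ is still bi-dominated at the moment it is deleted. Repeated use of \Lemref{greedism} then shows $\Gamma\setminus O$ is bi-dismantlable. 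It is $G$-invariant with strictly fewer vertices, so induction finishes the argument once we know $\Gamma\setminus O$ is not a single vertex. This is guaranteed by the following observation: if $\Gamma\setminus O=\{w\}$ then $O=V(\Gamma)\setminus\{w\}$ is a single orbit and $w$ is fixed by $G$; transitivity of $G$ on $O$ together with connectedness forces $w$ to be adjacent to every vertex of $O$, and bipartiteness then forbids edges inside $O$, so $\Gamma$ is a star and hence a biclique, contradicting our standing assumption. The invariant biclique produced in $\Gamma\setminus O$ by induction is a biclique of $\Gamma$ stabilized by $G$.

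In the twin regime I would instead pass to the quotient $\Gamma/{\sim}$ obtained by identifying vertices with identical neighbourhoods, its vertices being the $\sim$-classes and two classes being adjacent when their members are. As false twins are non-adjacent and share neighbourhoods, $\Gamma/{\sim}$ is again a finite simplicial bipartite graph; it is twin-free by construction, $G$ acts on it since automorphisms permute $\sim$-classes, and it is not a single vertex because $\Gamma$ is connected with at least two vertices. After verifying that a bi-dismantlement of $\Gamma$ descends to one of $\Gamma/{\sim}$ (a strict bi-domination in $\Gamma$ induces a bi-domination of the corresponding classes), the twin-free case applies to $\Gamma/{\sim}$ and yields a $G$-invariant biclique $\bar B$. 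Its preimage $B\subseteq\Gamma$ is a biclique—adjacency between two classes means complete adjacency between the corresponding twin-classes, and classes within each side of $\bar B$ are pairwise non-adjacent and internally independent—and $B$ is $G$-invariant, completing the induction.

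The main obstacle is exactly the presence of false twins, i.e.\ mutual bi-domination. Twins are bi-dominated, yet a maximal twin-class cannot be deleted in its entirety (the last surviving member loses all its dominators), so the straightforward orbit-removal breaks down; this is what forces the auxiliary twin-quotient, reducing everything to the well-behaved partial-order situation of the twin-free regime. A secondary difficulty, peculiar to the bipartite setting, is that a single vertex is not a biclique, so the reduction must be prevented from collapsing to one vertex; the star observation above is what rules this out.
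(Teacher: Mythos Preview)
Your proposal is correct and follows essentially the same architecture as the paper: induct on $|V(\Gamma)|$, split into a twin-free case where you delete a $G$-invariant set of bi-dominated vertices, and a twin case where you pass to the quotient $\Gamma/{\sim}$ and pull back an invariant biclique.

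There are only minor cosmetic differences. In the twin-free case the paper deletes the entire set $D$ of bi-dominated vertices at once, whereas you delete a single orbit $O=Gv\subseteq D$; both work by the same observation that each bi-dominated vertex has a non-bi-dominated dominator surviving the deletion. Your star argument ruling out $\lvert\Gamma\setminus O\rvert=1$ is correct but redundant: once \Lemref{greedism} shows $\Gamma\setminus O$ is bi-dismantlable, the graph automatically has at least two vertices since any bi-dismantling sequence terminates in a biclique. In the twin case, rather than ``descending'' a bi-dismantlement, the paper observes directly that $\Gamma/{\sim}$ is isomorphic to a subgraph of $\Gamma$ obtained by deleting all but one representative from each twin class---each such deletion removes a bi-dominated vertex, so \Lemref{greedism} applies immediately. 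This is slightly cleaner than tracking how a given dismantlement of $\Gamma$ induces one on the quotient, but the content is the same.
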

\begin{proof}
  The proof is by induction on the number $\#\sk{0}{\Gamma}$ of
  vertices of $\Gamma$.  If $\#\sk{0}{\Gamma} = 2$ then $\Gamma$ is a
  biclique.  This proves the base case.  For $\#\sk{0}{\Gamma} > 2$
  let $D$ be the set of bi-dominated vertices of $\Gamma$ and consider
  the following two cases.
  
  \paragraph{Case I: No two vertices of $D$ have the same set of
    neighbours} It follows that every bi-dominated vertex $v \in D$ has a
  bi-dominator in $\Gamma \setminus D$.  Otherwise we could construct a
  sequence $v = v_1, v_2, v_3, \ldots$ in $D$ starting on $v$ such
  that $v_i$ is bi-dominated by $v_{i+1}$, for each $i$.  But $D$ is
  finite so this sequence must have repeated vertices, contradicting
  the assumption of this case.  Now $D$ is invariant under $G$ and
  hence so is the full subgraph $\Gamma' = \Gamma \setminus D$.  But
  $\Gamma'$ is bi-dismantlable, by \Lemref{greedism}, and so stabilizes
  a biclique, by induction.

  \paragraph{Case II: There is a pair of vertices of $D$ that have the
    same set of neighbours} Define a relation $\sim$ on the vertex set
  of $\Gamma$ by $u \sim v$ if $u$ and $v$ have the same set of
  neighbours ($S_1(u) = S_1(v)$).  This is an equivalence relation so
  we can take the quotient graph $\Gamma' = \Gamma/{\sim}$.  The
  quotient $\Gamma'$ can also be obtained by successively removing
  bi-dominated vertices starting with $\Gamma$, since a vertex of
  $\Gamma$ is bi-dominated by every other member of its equivalence
  class.  So, by \Lemref{greedism}, $\Gamma'$ is bi-dismantlable.  The
  action of $G$ descends to an action on $\Gamma'$ and the quotient
  map $\Gamma \to \Gamma'$ is $G$-equivariant.  But $\Gamma'$ has
  fewer vertices than $\Gamma$ and so, by induction, has a stable
  biclique $B$.  The preimage of an edge in $\Gamma'$ is a biclique in
  $\Gamma$ and so the preimage of $B$ is an invariant biclique of
  $\Gamma$.
\end{proof}

\subsection{Breadth-First Search and
  \texorpdfstring{$4$-Bridged}{4-Bridged} Graphs}
\sseclabel{bfs}

In order to show that finite $4$-bridged graphs are bi-dismantlable we
present a well-known algorithmic tool, the breadth-first search.  This
algorithm gives us a spanning tree and a numbering of the vertices of
a $4$-bridged graph which we can use to dismantle it, as in Chepoi
\cite{Chepoi:1997} for the case of bridged graphs.  Bandelt first
proved the bi-dismantlability of $4$-bridged graphs
\cite[Theorem~2]{Bandelt:1988}.

\begin{defn}
  \defnlabel{bfs} \defterm{Breadth-first search} (BFS) is an algorithm
  taking as input a locally finite simplicial graph $\Gamma$ on $n$
  vertices and a starting vertex $u \in \Gamma$.  The BFS algorithm
  visits all of the vertices of $\Gamma$, starting with $u$, numbering
  the $i$th vertex it visits with the number $i$.  The algorithm
  proceeds as follows.
  \begin{enumerate}
  \item Assign the number $1$ to $u$.
  \item As long as there remains a vertex of $\Gamma$ that is not
    assigned a number, repeat the following two steps.
    \begin{enumerate}
    \item \itmlabel{leastnumv} Let $v$ be the least numbered vertex of
      $\Gamma$ that has an unnumbered neighbour.
    \item \itmlabel{assnum} Take an arbitrary unnumbered neighbour
      $w$ of $v$ and assign to it the next available number
      (i.e. the least positive integer not yet assigned to a
      vertex).
    \end{enumerate}
  \end{enumerate}
  The strict total order induced on the vertices of $\Gamma$ by these
  numbers is called the \defterm{BFS order}.  We denote a BFS order
  with starting vertex $u$ by $\bfslt_u$ or just $\bfslt$.
\end{defn}

A BFS order with starting vertex $u$ is not necessarily unique due to
the arbitrary choices made during an execution of the BFS algorithm.
However, the order is always compatible with the distance of vertices
from $u$.  Precisely, if the vertex $v$ is closer to $u$ than the
vertex $w$ then $v$ precedes $w$ in any BFS order with starting vertex
$u$, i.e., \[ |v,u| < |w,u| \implies v \bfslt_u w. \]

Fix a BFS order on $\Gamma$ with starting vertex $u$.  A neighbour $w$
of a vertex $v$ is called a \defterm{pseudoparent} of $v$ if it
precedes $v$ in the BFS order.  So if $v$ and $w$ are adjacent
vertices of $\Gamma$ then one of $v$ or $w$ is a pseudoparent of the
other.  The minimal pseudoparent of $w$ in the BFS order is the
\defterm{parent} of $w$, denoted $\p w$.  Every vertex of $\Gamma$
other than the starting vertex $u$ has a parent.  If $v \bfslt w$ for
some vertices $v$ and $w$ of $\Gamma$ having parents $\p v$ and $\p w$
then $\p v \bfsle \p w$.  The subgraph $\Gamma'$ of $\Gamma$ defined
by the parent relation is a spanning tree of $\Gamma$.  If $\Gamma$ is
bipartite, then the pseudoparents of a vertex $v$ are all strictly
closer to $u$ than $v$ is.

\begin{figure}
  \centering
  \begin{subfigure}[b]{0.33\textwidth}
    \begin{tikzpicture}
      \coordinate (v) at (0,0);
      \coordinate (Pv) at (-1,1);
      \coordinate (w) at (1,1);
      \coordinate (x) at (0,2);
      \coordinate (P2v) at (-1,2);
      \coordinate (Pw) at (1,2);

      \begin{scope}[thick]
        \draw (v) -- (w);
        \draw (Pv) -- (x);
        \draw (w) -- (x);
        \begin{scope}[decoration={
            markings,
            mark=at position 0.5 with {\arrow{>}}}] 
          \draw[postaction={decorate}] (v) -- (Pv);
          \draw[postaction={decorate}] (w) -- (Pw);
          \draw[postaction={decorate}] (Pv) -- (P2v);
        \end{scope}
      \end{scope}

      \node[vertex,label={below:$v$}] at (v) {};
      \node[vertex,label={left:$Pv$}] at (Pv) {};
      \node[vertex,label={right:$w$}] at (w) {};
      \node[vertex,label={above:$x$}] at (x) {};
      \node[vertex,label={above:$P^2v$}] at (P2v) {};
      \node[vertex,label={above:$Pw$}] at (Pw) {};
    \end{tikzpicture}
    \caption{}
    \figlabel{fellowtrav}
  \end{subfigure}%
  \begin{subfigure}[b]{0.33\textwidth}
    \begin{tikzpicture}
      \coordinate (v) at (0,0);
      \coordinate (Pv) at (-1,1);
      \coordinate (w) at (1,1);
      \coordinate (x) at (0,2);
      \coordinate (P2v) at (-1,2);
      \coordinate (Pw) at (1,2);
      \coordinate (Px) at (0,3);

      \begin{scope}[thick]
        \draw (v) -- (w);
        \draw (Pv) -- (x);
        \draw (w) -- (x);
        \draw (P2v) -- (Px);
        \draw (Pw) -- (Px);
        \begin{scope}[decoration={
            markings,
            mark=at position 0.5 with {\arrow{>}}}] 
          \draw[postaction={decorate}] (v) -- (Pv);
          \draw[postaction={decorate}] (w) -- (Pw);
          \draw[postaction={decorate}] (Pv) -- (P2v);
          \draw[postaction={decorate}] (x) -- (Px);
        \end{scope}
      \end{scope}

      \node[vertex,label={below:$v$}] at (v) {};
      \node[vertex,label={left:$Pv$}] at (Pv) {};
      \node[vertex,label={right:$w$}] at (w) {};
      \node[vertex,label={right:$x$}] at (x) {};
      \node[vertex,label={left:$P^2v$}] at (P2v) {};
      \node[vertex,label={right:$Pw$}] at (Pw) {};
      \node[vertex,label={above:$Px$}] at (Px) {};
    \end{tikzpicture}
    \caption{}
    \figlabel{fellowtrav2}
  \end{subfigure}%
  \begin{subfigure}[b]{0.33\textwidth}
    \begin{tikzpicture}
      \coordinate (v) at (0,0);
      \coordinate (Pv) at (-1,1);
      \coordinate (y) at (0,1);
      \coordinate (w) at (1,1);
      \coordinate (Py) at (-1,2);
      \coordinate (Pw) at (1,2);
      \coordinate (x) at (0,3);

      \begin{scope}[thick]
        \draw (v) -- (y);
        \draw (v) -- (w);
        \draw (Pv) -- (Py);
        \draw (Pv) -- (Pw);
        \draw (Py) -- (x);
        \draw (Pw) -- (x);
        \draw[thickeraser] (y) -- (Py);
        \begin{scope}[decoration={
            markings,
            mark=at position 0.5 with {\arrow{>}}}] 
          \draw[postaction={decorate}] (v) -- (Pv);
          \draw[postaction={decorate}] (y) -- (Py);
          \draw[postaction={decorate}] (w) -- (Pw);
        \end{scope}
      \end{scope}

      \node[vertex,label={below:$v$}] at (v) {};
      \node[vertex,label={left:$Pv$}] at (Pv) {};
      \node[vertex,label={right:$y$}] at (y) {};
      \node[vertex,label={right:$w$}] at (w) {};
      \node[vertex,label={left:$Py$}] at (Py) {};
      \node[vertex,label={right:$Pw$}] at (Pw) {};
      \node[vertex,label={above:$x$}] at (x) {};
    \end{tikzpicture}
    \caption{}
    \figlabel{bfsdom}
  \end{subfigure}
  \caption{Figures for \Lemref{fellowtrav} and \Lemref{bfsdom}.}
\end{figure}

\begin{lem}
  \lemlabel{fellowtrav} Let $\Gamma$ be a locally finite $4$-bridged
  graph and run the BFS algorithm on $\Gamma$ starting at an arbitrary
  vertex $u$.  Let $v$ and $w$ be vertices of $\Gamma$ with parents
  $\p v$ and $\p w$.  If $w$ is a pseudoparent of $v$, then $\p w$ is
  a pseudoparent of $\p v$.
\end{lem}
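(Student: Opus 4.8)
The plan is to reduce the statement to the single claim that $\p w$ is \emph{adjacent} to $\p v$, and then to prove this adjacency by induction on $d = |v,u|$, using the quadrangle condition of modular graphs together with the diagonal property of embedded $6$-cycles in $4$-bridged graphs. First I would dispose of the ordering. Since $\Gamma$ is bipartite and $w$ is a pseudoparent of $v$, the four vertices sit at consecutive levels: $|w,u| = |\p v,u| = d-1$ and $|\p w,u| = d-2$. As BFS order refines distance from $u$, any neighbour of $\p v$ at level $d-2$ precedes $\p v$; hence once $\p w \sim \p v$ is known, $\p w \bfslt \p v$ is automatic (it also follows from the monotonicity $w \bfslt v \Rightarrow \p w \bfsle \p v$). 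Everything thus reduces to producing the edge $\p w\,\p v$, equivalently to ruling out $|\p v,\p w| = 3$, the distance being odd and at most $3$ along the path $\p v\,v\,w\,\p w$.

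For the adjacency I would argue as follows in the main case $\p v \neq w$ (if $\p v = w$ then $\p w$ is a pseudoparent of $w = \p v$ by definition, and we are done). Then $\p v \bfslt w$, both vertices lie at level $d-1$ and are adjacent to $v$, so by the quadrangle condition — valid since $4$-bridged graphs are hereditary modular, hence modular — there is a common neighbour $x$ of $\p v$ and $w$ with $|x,u| = d-2$. Since $x$ is then a pseudoparent of $w$, minimality of $\p w$ gives $\p w \bfsle x$, and if $\p w = x$ we are done because $x \sim \p v$. In particular, when $d = 2$ we have $x = u$, forcing $\p w = u = x$; this is where the induction bottoms out.

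So assume $\p w \neq x$ and $d \ge 3$. Applying the present lemma inductively at level $d-1 < d$ to the pair $(w,x)$ yields $\p x \sim \p w$, and to the pair $(\p v, x)$ yields $\p x \sim \p{\p v}$, where $\p x$ sits at level $d-3$. If $\p{\p v} = \p w$ then $\p w = \p{\p v} \sim \p v$ and we are done; otherwise I consider the closed edge-walk
\[ w \to v \to \p v \to \p{\p v} \to \p x \to \p w \to w. \]
Its six vertices lie at levels $d-1, d, d-1, d-2, d-3, d-2$ and are pairwise distinct (using $\p v \neq w$ and $\p{\p v}\neq \p w$, together with the uniqueness of the level-$d$ vertex $v$ and the level-$(d-3)$ vertex $\p x$), so it is an embedded $6$-cycle. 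By the $4$-bridged property it has a diagonal joining antipodal vertices; the pair $(v, \p x)$ is excluded since their levels differ by $3$, so the diagonal is either $\p v\,\p w$ — the desired edge — or $w\,\p{\p v}$. In the latter case $\p{\p v}$ is a pseudoparent of $w$, whence $\p w \bfsle \p{\p v}$ by minimality; combined with $\p{\p v} \bfsle \p w$ (monotonicity applied to $\p v \bfslt w$) this forces $\p{\p v} = \p w$, and again $\p w = \p{\p v} \sim \p v$. Either way $\p w \sim \p v$.

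The crux of the argument — and the only place where the nonpositive curvature really enters — is manufacturing the \emph{edge} $\p v\,\p w$ rather than merely the length-$3$ path between them. The competing antipodal pair $w\,\p{\p v}$ cannot be excluded on the nose by a level comparison (both endpoints differ by one level and could a priori be adjacent), which is precisely why the proof must be organized inductively so that \emph{both} admissible diagonals deliver the conclusion; I expect this to be the main obstacle to a direct, non-inductive argument. A secondary point to pin down carefully is the existence of the lower common neighbour $x$: I would take it from the quadrangle condition for modular graphs, or, staying entirely within the paper's toolkit, extract it from a minimal-area (hence $\CAT(0)$) disc diagram spanning the two length-$d$ geodesics from $u$ to $v$ running through $\p v$ and through $w$.
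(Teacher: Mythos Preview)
Your proof is correct and follows essentially the same route as the paper: induction on the level of $v$, production of a common lower neighbour $x$ of $\p v$ and $w$, two inductive applications to get $\p x$ adjacent to both $\p w$ and $\p^2 v$, and then analysis of the diagonals of the resulting embedded $6$-cycle $v,\p v,\p^2 v,\p x,\p w,w$. The only notable difference is in how you obtain $x$: you invoke the quadrangle condition for modular graphs, whereas the paper uses its Lemma that balls in $\sk{1}{X}$ are isometrically embedded (your alternative suggestion); both give exactly the needed common pseudoparent, so this is a matter of packaging rather than substance.
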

\begin{proof}
  The proof is by induction on $|v,u|$, which is at least $2$ since
  $w$ is assumed to have a parent.  If $|v,u| = 2$ then
  $\p w = u = \p^2 v$ and so the lemma holds.  Suppose the lemma holds
  for $2 \le |v,u| < r$.  We will prove it for $|v,u| = r$.  If
  $w = \p v$ then $\p w = \p^2 v$ is the parent, and so a
  pseudoparent, of $\p v$.  So we may assume that $w \neq \p v$.  By
  the definition of parent, this implies that $\p v \bfslt w$.  Since
  $\Gamma$ is bipartite, it also implies that $|\p v, w| = 2$ in
  $\Gamma$.  But the ball $B_{r-1}(u)$ contains $\p v$ and $w$ so, by
  \Lemref{ballsisom}, $\p v$ and $w$ have a common pseudoparent $x$.
  
  Now, if $\p w = x$ or $\p w = \p^2 v$ then we are done, so assume
  that $\p w \neq x$ and $\p w \neq \p^2 v$.  Then, since
  $\p v \bfslt w$, we have $\p^2 v \bfslt \p w \bfslt x$ and so
  $\p^2 v \neq x$.  Hence the graph in \Figref{fellowtrav} is embedded
  in $\Gamma$.  By induction, the parent $\p x$ of $x$ is a
  pseudoparent of both $\p w$ and $\p^2 v$ as in the graph in
  \Figref{fellowtrav2} which is also embedded in $\Gamma$.  Then, as
  $\Gamma$ is $4$-bridged, the $6$-cycle $v$, $\p v$, $\p^2 v$, $\p x$,
  $\p w$, $w$, $v$ must have a diagonal.  But $\p x$ cannot be
  adjacent to $v$, as $|\p x, u| = |v,u| - 3$, and $\p^2 v$ cannot be
  adjacent to $w$, since $\p^2 v \bfslt \p w$ and $\p w$ is the parent
  of $w$.  Hence $\p w$ and $\p v$ must be adjacent so that $\p w$ is
  a pseudoparent of $\p v$.
\end{proof}

\begin{lem}
  \lemlabel{bfsdom} Let $\Gamma$ be a locally finite $4$-bridged graph
  and run the BFS algorithm on $\Gamma$ starting at some vertex $u$.
  Let $x$ be any vertex of $\Gamma$ at distance $r = |x,u| \ge 2$ from
  $u$.  Then $x$ is bi-dominated in $B_r(u)$ by the parent of its
  $\bfslt$-maximal pseudoparent.
\end{lem}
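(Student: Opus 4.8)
The plan is to reduce the statement to a single adjacency and then force that adjacency with the $6$-cycle property of $4$-bridged graphs, in exact parallel with the proof of \Lemref{fellowtrav}. Because $\Gamma$ is bipartite and $x$ lies at distance $r$ from $u$, every neighbour of $x$ is at distance $r-1$ or $r+1$, so the neighbours of $x$ lying in $B_r(u)$ are precisely the pseudoparents of $x$. Writing $w$ for the $\bfslt$-maximal pseudoparent of $x$, it therefore suffices to show that every pseudoparent $v$ of $x$ is adjacent to $\p w$; this is immediate when $v = w$, so I fix a pseudoparent $v \bfslt w$ and aim to prove $v \sim \p w$.

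First I would dispose of the easy case. Since $v \bfslt w$, monotonicity of parents gives $\p v \bfsle \p w$, and if $\p v = \p w$ then $v$ is adjacent to its own parent $\p v = \p w$ and we are done. So I may assume $\p v \bfslt \p w$, and in particular $\p v \neq \p w$. By \Lemref{fellowtrav} applied to the pseudoparents $v$ and $w$ of $x$, both $\p v$ and $\p w$ are pseudoparents of $\p x$; hence they are two distinct neighbours of $\p x$ that are equidistant from $u$ (both at distance $r-2$), whence $|\p v, \p w| = 2$. Invoking the isometric embedding of balls (\Lemref{ballsisom}) inside $B_{r-2}(u)$, the vertices $\p v$ and $\p w$ share a common neighbour $z$ in $B_{r-2}(u)$, which is necessarily a common pseudoparent lying at distance $r-3$ from $u$.

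With $z$ in hand I would consider the closed path
\[ v,\ x,\ w,\ \p w,\ z,\ \p v \]
whose consecutive vertices are joined by the edges $v \sim x$, $x \sim w$, $w \sim \p w$, $\p w \sim z$, $z \sim \p v$ and $\p v \sim v$. Its six vertices lie at the respective distances $r-1, r, r-1, r-2, r-3, r-2$ from $u$, and since $v \neq w$ and $\p v \neq \p w$ they are pairwise distinct, so this is an embedded $6$-cycle; as $\Gamma$ is $4$-bridged it must have a diagonal joining one of the three pairs of opposite vertices $(v, \p w)$, $(x, z)$ and $(w, \p v)$. The crux, and the only real obstacle, is to exclude the two unwanted diagonals, and here the set-up pays off exactly as in \Lemref{fellowtrav}: the pair $(x, z)$ cannot be an edge because $|x, u| = r$ and $|z, u| = r-3$ differ by more than one, while the pair $(w, \p v)$ cannot be an edge because otherwise $\p v$ would be a pseudoparent of $w$ strictly preceding $\p w$, contradicting the minimality of the parent $\p w$ of $w$. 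The forced diagonal is therefore $(v, \p w)$, giving $v \sim \p w$, and since $v$ was an arbitrary pseudoparent of $x$ this shows that $x$ is bi-dominated by $\p w$ in $B_r(u)$.
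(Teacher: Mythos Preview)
Your proof is correct and follows essentially the same route as the paper: the paper likewise reduces to showing that $\p w$ is adjacent to every pseudoparent of $x$, uses \Lemref{fellowtrav} to place $\p v$ and $\p w$ as common pseudoparents of $\p x$, invokes \Lemref{ballsisom} to find a common pseudoparent $z$ at level $r-3$, and then forces the diagonal $v\sim\p w$ in the very same $6$-cycle $v,x,w,\p w,z,\p v$ by ruling out $(x,z)$ on distance grounds and $(w,\p v)$ via the minimality of $\p w$. The only cosmetic difference is that the paper first disposes of the pseudoparents $w$ and $\p x$ separately before treating a generic $y$, whereas you absorb $\p x$ into the general case.
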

\begin{proof}
  Let $w$ be the maximal pseudoparent of $v$ in the BFS order.  We
  want to show that every neighbour of $v$ in $B_r(u)$ is also a
  neighbour of $\p w$.  But the neighbours of $v$ in $B_r(u)$ are
  precisely its pseudoparents.  So it suffices to show that $\p w$ is
  adjacent to every pseudoparent of $v$.  This holds for the
  pseudoparents $\p v$ and $w$, by \Lemref{fellowtrav}.
  
  Let $y$ be any other pseudoparent of $v$.  We will show that $\p w$
  is adjacent to $y$.  By \Lemref{fellowtrav}, $\p y$ is a
  pseudoparent of $\p v$.  If $\p y$ and $\p w$ coincide then we are
  done, so we may assume otherwise.  Then $|\p y, \p w| = 2$ in
  $\Gamma$ and so, by \Lemref{ballsisom}, $\p y$ and $\p w$ have a
  common pseudoparent $x$ as shown in the graph in \Figref{bfsdom}
  which is embedded in $\Gamma$.  Then, as $\Gamma$ is $4$-bridged, the
  $6$-cycle $v, y, \p y, x, \p w, w, v$ must have a diagonal.  But $x$
  and $v$ cannot be adjacent since $x$ is too close to $u$ relative to
  $v$.  Nor can $\p y$ and $w$ be adjacent because the facts
  $y \bfslt w$ and $\p y \neq \p w$ imply $\p y \bfslt \p w$ and yet
  $\p w$ is the parent of $w$.  Hence $\p w$ and $y$ are adjacent.
\end{proof}

\begin{cor}
  \corlabel{fbdism} Let $\Gamma$ be a finite $4$-bridged graph.  If
  $\Gamma$ has at least two vertices, then $\Gamma$ is
  bi-dismantlable.
\end{cor}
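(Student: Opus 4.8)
The plan is to read off a bi-dismantlement directly from a breadth-first search. Fix an arbitrary vertex $u$ of $\Gamma$ and run the BFS algorithm, obtaining a BFS order $u = u_1 \bfslt u_2 \bfslt \cdots \bfslt u_n$. I would then delete the vertices in reverse BFS order, removing $u_n$, then $u_{n-1}$, and so on, stopping once only the vertices at distance at most $1$ from $u$ remain.

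The key claim is that each deletion removes a bi-dominated vertex. Suppose $u_n, \ldots, u_{k+1}$ have already been deleted, so that the current graph $\Gamma_k$ is the full subgraph of $\Gamma$ on $\{u_1, \ldots, u_k\}$, and suppose $r = |u_k, u| \ge 2$. Since the BFS order refines the distance from $u$ (that is, $v \bfslt w$ implies $|v,u| \le |w,u|$), every $u_j$ with $j \le k$ satisfies $|u_j, u| \le r$, so the vertex set of $\Gamma_k$ is contained in that of the ball $B_r(u)$ and consequently $\Gamma_k$ is a full subgraph of $B_r(u)$. By \Lemref{bfsdom}, $u_k$ is bi-dominated in $B_r(u)$ by the parent $z$ of its $\bfslt$-maximal pseudoparent; and since $z \bfslt u_k$, the dominator $z$ lies in $\{u_1, \ldots, u_k\}$, hence in $\Gamma_k$. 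Because bi-domination persists in every full subgraph containing the two vertices involved, $z$ bi-dominates $u_k$ in $\Gamma_k$, so $u_k$ may be removed.

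Iterating, I delete every vertex at distance $\ge 2$ from $u$, leaving the full subgraph $B_1(u)$ on $u$ together with its neighbours. As $\Gamma$ is $4$-bridged it is bipartite, so the neighbours of $u$—all lying in the part opposite $u$—span no edges; thus $B_1(u)$ is a star $K_{1,m}$ with $m = \deg(u) \ge 1$, which is a biclique. This exhibits the required sequence of graphs ending on a biclique, each obtained from the previous one by removing a bi-dominated vertex, so $\Gamma$ is bi-dismantlable in the sense of \Defnref{dismantle}.

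The one delicate point, and the step I would be most careful about, is the bookkeeping that keeps each intermediate graph $\Gamma_k$ a full subgraph of the ball $B_r(u)$ for $r = |u_k,u|$: this is exactly what lets me invoke \Lemref{bfsdom}, whose conclusion concerns bi-domination inside $B_r(u)$ rather than inside $\Gamma_k$, and then transport that conclusion to $\Gamma_k$ via the persistence of bi-domination under passage to full subcomplexes. Everything else—the monotonicity of the BFS order in the distance from $u$, the fact that the dominator $z$ precedes $u_k$ in the order, and the identification of $B_1(u)$ with a star—is routine.
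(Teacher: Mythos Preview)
Your proof is correct and follows essentially the same approach as the paper's: pick a vertex $u$, use \Lemref{bfsdom} to peel off vertices at maximal distance from $u$ until only the star $B_1(u)$ remains. Your version is simply more explicit about the bookkeeping---you fix the reverse BFS order and spell out why each intermediate $\Gamma_k$ sits as a full subgraph inside the relevant ball so that the bi-domination from \Lemref{bfsdom} transports---whereas the paper compresses this into a single sentence.
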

\begin{proof}
  Take any vertex $u$ of $\Gamma$.  By \Lemref{bfsdom}, for $r \ge 2$,
  every vertex $v$ of the metric sphere $S_r(u)$ is bi-dominated in
  $B_r(u)$ by some vertex $w$ in $S_{r-2}(u)$.  Hence, we may
  successively remove vertices at maximal distance from $u$, until the
  biclique $B_1(u)$ is all that remains.
\end{proof}

\begin{cor}
  \corlabel{finfb} A group $G$ acting on a finite quadric complex $X$
  not equal to a single vertex, stabilizes a biclique of $X$.
\end{cor}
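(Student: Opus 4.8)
The plan is to reduce the statement to the graph-theoretic invariant biclique theorem for bi-dismantlable graphs (\Thmref{disminvbc}) by passing to the $1$-skeleton of $X$. First I would invoke the characterization in \Propref{mgchar}: since $X$ is quadric, its $1$-skeleton $\sk{1}{X}$ is a $4$-bridged graph, and hence in particular a finite simplicial graph. Because $X$ is connected and is not a single vertex, $\sk{1}{X}$ has at least two vertices. Applying \Corref{fbdism} then shows that $\sk{1}{X}$ is bi-dismantlable.

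Next I would observe that the combinatorial action of $G$ on $X$ restricts to an action by simplicial automorphisms on $\sk{1}{X}$, i.e.\ to a homomorphism from $G$ into the automorphism group $\operatorname{Aut}(\sk{1}{X})$. By \Thmref{disminvbc}, the full group $\operatorname{Aut}(\sk{1}{X})$ stabilizes some biclique $B$ of $\sk{1}{X}$. Since the image of $G$ is a subgroup of $\operatorname{Aut}(\sk{1}{X})$, it too stabilizes $B$; equivalently, $G$ stabilizes $B$. As a biclique of $\sk{1}{X}$ is precisely a biclique of $X$ in the sense of the statement, this finishes the argument.

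There is no serious obstacle here, since the real work is carried by \Corref{fbdism} and \Thmref{disminvbc}. The only points requiring care are bookkeeping ones: checking that $\sk{1}{X}$ has at least two vertices so that \Corref{fbdism} and \Thmref{disminvbc} apply, which is guaranteed by the hypothesis that $X$ is not a single vertex; and noting that passing from the full automorphism group in \Thmref{disminvbc} to the image of $G$ loses nothing, since any subgroup of the stabilizer of $B$ again stabilizes $B$. In particular this argument imposes no finiteness assumption on $G$ itself, consistent with the statement of \Corref{finfb}.
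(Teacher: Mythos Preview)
Your proposal is correct and follows essentially the same route as the paper's own proof: pass to the $1$-skeleton via \Propref{mgchar}, apply \Corref{fbdism} to get bi-dismantlability, and then invoke \Thmref{disminvbc}. The only difference is that you spell out a couple of bookkeeping points (the two-vertex hypothesis and the passage from $\operatorname{Aut}(\sk{1}{X})$ to the image of $G$) that the paper leaves implicit.
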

\begin{proof}
  The $1$-skeleton $\sk{1}{X}$ of $X$ is $4$-bridged, by \Propref{mgchar},
  and every automorphism of $X$ restricts to an automorphism of
  $\sk{1}{X}$ which is bi-dismantlable, by \Corref{fbdism}.  So by
  \Thmref{disminvbc}, $X$ has an invariant biclique.
\end{proof}

\begin{thm}[Invariant Biclique Theorem]
  \thmlabel{ibt} Let $G$ be a finite group acting on a locally finite
  quadric complex $X$, which is not equal to a single vertex.  Then
  $G$ stabilizes a biclique of $X$.
\end{thm}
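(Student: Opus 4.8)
The plan is to reduce the locally finite case to the finite case, which has already been settled in \Corref{finfb}, by replacing $X$ with a finite $G$-invariant model and then transporting the resulting invariant biclique back into $X$. Since $X$ is quadric it is simply connected by \Defnref{quadric}, and by hypothesis it is locally finite while $G$ is finite. I would therefore invoke \Corref{fincore} to obtain a finite quadric complex $X'$ on which $G$ acts, together with a $G$-equivariant immersion $f \colon X' \to X$. The strategy is then to produce a $G$-invariant biclique in $X'$ and to show that $f$ carries it onto a $G$-invariant biclique of $X$.

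Two cases arise according to whether $X'$ is a single vertex. If $X'$ is a single vertex $v'$, then $G$ fixes $v'$ and, by equivariance of $f$, the group $G$ fixes the vertex $v = f(v')$ of $X$. Since $X$ is connected and not a single vertex, $v$ has at least one neighbour, and because $\sk{1}{X}$ is $4$-bridged---hence bipartite by \Propref{mgchar}---the neighbour set $N(v)$ is independent. Thus the star consisting of $v$ together with all edges from $v$ to $N(v)$ is a biclique $K_{1,n}$ with $n \ge 1$, and it is $G$-invariant because $G$ fixes $v$ and permutes $N(v)$. In the remaining case $X'$ is a finite quadric complex that is not a single vertex, so \Corref{finfb} supplies a biclique $B$ of $X'$ stabilized by $G$, with vertex set partitioned into parts $P$ and $Q$.

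The main work is to check that $f$ sends $B$ to a genuine biclique of $X$, using that $\sk{1}{X}$, being $4$-bridged, is simplicial and bipartite, together with local injectivity of the immersion $f$. First, $f$ is injective on the vertices of $B$: if $p_1 \ne p_2$ in $P$ had $f(p_1) = f(p_2)$, then picking any $q \in Q$ the two distinct edges $p_1 q$ and $p_2 q$ incident at $q$ would both map to the unique edge of $X$ joining $f(q)$ to $f(p_1) = f(p_2)$---using simpliciality of $\sk{1}{X}$---contradicting local injectivity of $f$ at $q$; the same argument applies within $Q$, and no vertex of $P$ can be identified with one of $Q$ since the edge joining them maps to an edge of the simplicial graph $\sk{1}{X}$, whose endpoints are distinct. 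Second, the image $f(B)$ is complete bipartite with parts $f(P)$ and $f(Q)$: every edge $pq$ of $B$ maps to an edge $f(p)f(q)$, while there can be no edge of $\sk{1}{X}$ within $f(P)$ (or $f(Q)$), since such an edge together with any $f(q)$ would yield a triangle in the bipartite graph $\sk{1}{X}$. Finally, $f(B)$ is $G$-invariant because $f$ is $G$-equivariant and $B$ is $G$-invariant, which completes the argument.

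I expect the step transporting the biclique across the immersion to be the delicate one, since $f$ is only locally injective a priori and one must rule out a collapse of the complete bipartite structure. The observation that makes it go through is that simpliciality of $\sk{1}{X}$ forces distinct edges at a common vertex to have distinct images, so an immersion cannot fold a biclique onto a smaller graph, while the absence of triangles in the bipartite graph $\sk{1}{X}$ guarantees that the image acquires no spurious edges inside a part.
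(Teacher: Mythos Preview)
Your proof is correct and follows the same strategy as the paper: reduce to the finite case via \Corref{fincore}, apply \Corref{finfb}, and push the invariant biclique forward along the $G$-equivariant immersion. You are in fact more careful than the paper's three-line argument, explicitly handling the possibility that $X'$ is a single vertex and verifying that an immersion into a simplicial bipartite $1$-skeleton cannot collapse or corrupt the biclique structure---both points the paper leaves implicit.
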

\begin{proof}
  By \Corref{fincore} of Hanlon and Martinez-Pedroza, we have a finite
  quadric complex $X'$ mapping $G$-equivariantly into $X$. By
  \Corref{finfb}, $G$ stabilizes a biclique of $X'$.  The image of
  this biclique is an invariant biclique of $X$.
\end{proof}

As a corollary we obtain the following.

\begin{cor}
  Let $G$ be a quadric group.  Then $G$ has finitely many conjugacy
  classes of finite groups.
\end{cor}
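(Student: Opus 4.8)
The plan is to deduce this from the Invariant Biclique Theorem (\Thmref{ibt}) together with cocompactness; here ``finite groups'' should be read as finite subgroups, and conjugacy means conjugacy in $G$. First I would observe that, since $G$ acts properly and cocompactly on the quadric complex $X$, the complex $X$ is locally finite: cocompactness gives finitely many $G$-orbits of cells and properness gives finite cell stabilizers, and a standard counting argument then bounds the number of cells incident to any fixed vertex. If $X$ is a single vertex then $G$ is finite and the claim is immediate, so assume otherwise. Now let $H \le G$ be any finite subgroup. Restricting the action, $H$ acts on the locally finite quadric complex $X$, so by \Thmref{ibt} the subgroup $H$ stabilizes some biclique $B$ of $X$; in particular $H \le G_B$, the setwise stabilizer of $B$ in $G$.

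Next I would show that $G$ has only finitely many orbits of bicliques. Because $X$ is locally finite and the action is cocompact, there are finitely many orbits of vertices, so the valences of vertices of $X$ are bounded by some $N$. Every biclique has a vertex on each side, hence contains an edge, and each vertex of one side is adjacent to every vertex of the other; thus a biclique has at most $2N$ vertices. Having diameter at most $2$, it lies inside the ball of radius $2$ about any of its vertices, and each such ball is finite, so only finitely many bicliques contain a fixed vertex $v$. Translating $v$ to one of the finitely many orbit representatives shows that there are finitely many $G$-orbits of bicliques; fix representatives $B_1, \dots, B_m$.

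Finally I would verify that each stabilizer $G_{B_j}$ is finite. The group $G_{B_j}$ permutes the finite vertex set of $B_j$, giving a homomorphism to a finite symmetric group, and its kernel fixes a vertex of $B_j$ and hence injects into a vertex stabilizer, which is finite by properness; so $G_{B_j}$ is finite. Each finite subgroup $H$ stabilizes some $B = g B_j$, whence $g^{-1} H g \le G_{B_j}$, so every finite subgroup of $G$ is $G$-conjugate to a subgroup of one of the finitely many finite groups $G_{B_1}, \dots, G_{B_m}$. As a finite group has only finitely many subgroups, the collection of all subgroups of the $G_{B_j}$ is finite, and every finite subgroup of $G$ is conjugate into this finite collection; therefore $G$ has finitely many conjugacy classes of finite subgroups. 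The conceptual content lies entirely in the Invariant Biclique Theorem, and the main point requiring care is the orbit-finiteness of bicliques, which is exactly where local finiteness and cocompactness are combined.
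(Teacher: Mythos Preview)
Your proof is correct and follows essentially the same route as the paper: invoke \Thmref{ibt} to put each finite subgroup inside the stabilizer of a biclique, observe that bicliques sit in radius-$2$ balls so there are finitely many orbits of them by local finiteness plus cocompactness, and note that biclique stabilizers are finite by properness. You have simply made explicit several points (local finiteness of $X$, the finiteness of $G_{B_j}$ via the permutation kernel, the bound on biclique size) that the paper's proof leaves implicit.
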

\begin{proof}
  Let $X$ be a quadric complex on which $G$ acts properly and
  cocompactly.  By \Thmref{ibt}, each finite subgroup of $G$ is a
  subgroup of the stabilizer of some biclique of $X$.  Since the
  action is proper, the stabilizer of each biclique is finite and so
  has finitely many finite subgroups.  So it suffices to show that $X$
  has finitely many orbits of bicliques.  But every biclique is
  contained in a ball of radius $2$ of $\sk{1}{X}$.  Balls of radius
  $2$ are finite by local finiteness and, by cocompactness, there are
  finitely many orbits of such balls.  Hence, there are finitely many
  orbits of bicliques.
\end{proof}

\section{\texorpdfstring{$\cftf$}{C(4)-T(4)} Groups}
\seclabel{cftfgp}

Small cancellation theory traces its origins to the work of Dehn on
hyperbolic surface groups \cite{Dehn:1987} and has played a
significant role in the study of infinite groups since then.  The
standard textbook on the subject is Lyndon and Schupp
\cite{Lyndon:2001}.  We recall the definition of the $\scC(p)$ and
$\scT(q)$ small cancellation properties in \Ssecref{cptq}.
\Ssecref{cftfcomp} develops disc diagrammatic properties and the
Strong Helly Property of $\cftf$ complexes.  These properties are
crucial in the proof that $\cftf$ groups are quadric, given in
\Ssecref{quadrization}.  The proof relies on a construction
associating a square complex $X_Y$ to a simply connected $2$-complex
$Y$.  We prove that this square complex, on which the automorphism
group of $Y$ acts, is always simply connected and that it is quadric
when $Y$ is $\cftf$.

\subsection{The \texorpdfstring{$\scC(p)$}{C(p)} and \texorpdfstring{$\scT(q)$}{T(q)} Properties}
\sseclabel{cptq}

Definitions of the $\scC(p)$ and $\scT(q)$ properties and disc
diagrammatic consequences can be found elsewhere \cite{Lyndon:2001,
  McCammond:2002}.  We include them here for completeness and because
we require a Greendlinger's Lemma for $\cftf$ Disc Diagrams
(\Corref{greendlingercftf}) that gives four sites of positive
curvature on the boundary.

Before defining the $\scC(p)$ and $\scT(q)$ properties we give some
supporting definitions.

\begin{defn}
  \defnlabel{arc} Let $D$ be a disc diagram.  An \defterm{arc}
  $\alpha$ of $D$ is a minimal subgraph of its $1$-skeleton $\sk{1}{D}$
  satisfying the following conditions.
  \begin{enumerate}
  \item There is at least one $1$-cell in $\alpha$.
  \item If $\alpha$ contains a $0$-cell of valence $2$ of $\sk{1}{D}$,
    then it contains both of its incident $1$-cells.
  \end{enumerate}
  A $0$-cell $v$ of $D$ is a \defterm{node} of $D$ if it has valence
  other than $2$.  If a node $v$ is incident to an arc $\alpha$ then
  $v$ is a \defterm{node} of $\alpha$.
\end{defn}

\begin{figure}
  \centering
  \begin{tikzpicture}
    \pgfmathsetmacro\numsegments{7}
    \pgfmathsetmacro\radius{1}
    \pgfmathsetmacro\seglen{3/4}
    
    \coordinate (e0) at (0:\radius);
    \coordinate (e1) at (0:\radius+\seglen);

    \begin{scope}[thick]
      \draw (e0) arc (0:360:\radius);
      \draw (e0) -- node[above] {$e$} (e1);
    \end{scope}

    \node at (0,0) {$F$};
    
    \node[vertex] at (e0) {};
    \node[vertex] at (e1) {};
    
    \pgfmathsetmacro\nsmo{\numsegments - 1}
    \foreach \i in {1,...,\nsmo}
    {
      \pgfmathsetmacro\angle{\i * 360 / \numsegments}
      \node[vertex] at (\angle:\radius) {};
    }
  \end{tikzpicture}
  \caption{A $2$-cell with a $1$-cell attached.}
  \figlabel{spurarcs}
\end{figure}

The arcs of a disc diagram $D$ are, possibly closed, paths.  If $D$ is
a single $2$-cell then its only arc has no nodes.  If $D$ is a single
$2$-cell $F$ with a $1$-cell $e$ attached, as in \Figref{spurarcs}
then it has two arcs: a closed arc containing the boundary of $F$ and
an arc containing only $e$.  The former arc has a single node, the
valence $3$ $0$-cell of $e$.  The latter has both endpoints of $e$ as
nodes.

If an arc $\alpha$ of a disc diagram $D$ contains a $1$-cell on the
boundary $\bd D$ of $D$ then $\alpha$ lies entirely on $\bd D$.  If
$\alpha$ contains a $1$-cell incident on its two sides to the $2$-cells
$F_1$ and $F_2$ (with $F_1$ and $F_2$ possibly the same $2$-cell) then
this holds for every $1$-cell of $\alpha$.

An arc of a disc diagram $D$ is a \defterm{boundary arc} if it lies on
the boundary $\bd D$ of $D$.  A boundary arc with a valence $1$ node
is a \defterm{spur}.  The valence $1$ nodes of a spur are called its
\defterm{tips}.  A non-boundary arc is an \defterm{internal arc}.  If
an arc contains a $1$-cell that is contained in the boundary of a
$2$-cell $F$, then it is an \defterm{arc of the $2$-cell} $F$.

Let $D$ be a disc diagram in a $2$-complex $Y$ and let $F_1$ and $F_2$
be two intersecting $2$-cells of $D$.  An arc $\alpha$ of $F_1$ and
$F_2$ is \defterm{foldable} if, when the boundary paths of $F_1$ and
$F_2$ are based and oriented so as to coincide on the length of
$\alpha$, they map to the same closed path of $Y$.  A disc diagram $D$
in a $2$-complex is \defterm{reduced} if it has no foldable arcs.  Given
a disc diagram $D$ in a $2$-complex $Y$ we can obtain a reduced disc
diagram $D'$ in $Y$ with the same boundary path by performing a finite
number of reductions starting from $D$.  Each \newterm{reduction}
removes a pair of $2$-cells $F_1$ and $F_2$ that have a foldable arc.

\newcommand{\drawpetal}[8]{
  \begingroup

  \pgfmathsetmacro\numsegments{#4}
  \pgfmathsetmacro\radius{#5}
  \pgfmathsetmacro\oradius{#6}
  \pgfmathsetmacro\crado{#7}
  \pgfmathsetmacro\pca{#8}
  \pgfmathsetmacro\ps{#1}
  \pgfmathsetmacro\pe{#2}
  \pgfmathsetmacro\psegs{#3}
  \pgfmathsetmacro\pm{(\ps+\pe)/2}
  \pgfmathsetmacro\pco{(\pm+\ps)/2}
  \pgfmathsetmacro\pct{(\pm+\pe)/2}
  \pgfmathsetmacro\psa{\ps * 360 / \numsegments}
  \pgfmathsetmacro\pea{\pe * 360 / \numsegments}
  \pgfmathsetmacro\pma{\pm * 360 / \numsegments}
  \pgfmathsetmacro\pcao{\pma - \pca}
  \pgfmathsetmacro\pcat{\pma + \pca}

  \pgfmathsetmacro\cradt{\oradius/cos(\pca)}

  \draw (\psa:\radius)
    .. controls (\psa:\crado) and (\pcao:\cradt)
    .. (\pma:\oradius)
    .. controls (\pcat:\cradt) and (\pea:\crado)
    .. (\pea:\radius);
  
  \pgfmathsetmacro\psmo{\psegs-1}
  \foreach \p in {1,...,\psmo} {
    \draw[decorate,decoration={markings,mark=at
           position \p/\psegs with {\node[vertex] {};}}]
      (\psa:\radius)
      .. controls (\psa:\crado) and (\pcao:\cradt)
      .. (\pma:\oradius)
      .. controls (\pcat:\cradt) and (\pea:\crado)
      .. (\pea:\radius);
  }
  \endgroup
}
  
\begin{figure}
  \centering
  \begin{subfigure}[b]{0.35\textwidth}
    \centering
    \begin{tikzpicture}
      \pgfmathsetmacro\numsegments{12}
      \pgfmathsetmacro\radius{1}
      \pgfmathsetmacro\oradius{2 * \radius}
      \pgfmathsetmacro\crado{(\radius+\oradius)/2}
      \pgfmathsetmacro\pca{25}
      
      \begin{scope}[thick]
        \draw (0:\radius) arc (0:360:\radius);
        \drawpetal{0}{2}{1}{\numsegments}{\radius}{\oradius}{\crado}{\pca}
        \drawpetal{2}{4}{2}{\numsegments}{\radius}{\oradius}{\crado}{\pca}
        \drawpetal{4}{7}{3}{\numsegments}{\radius}{\oradius}{\crado}{\pca}
        \drawpetal{7}{9}{1}{\numsegments}{\radius}{\oradius}{\crado}{\pca}
        \drawpetal{9}{12}{4}{\numsegments}{\radius}{\oradius}{\crado}{\pca}
      \end{scope}

      \foreach \i in {1,...,\numsegments}
      {
        \pgfmathsetmacro\angle{\i * 360 / \numsegments}
        \node[vertex] at (\angle:\radius) {};
      }
    \end{tikzpicture}
    \caption{A daisy with five petals.}
    \figlabel{daisy}
  \end{subfigure}
  \hspace{1cm}%
  \begin{subfigure}[b]{0.35\textwidth}
    \centering
    \begin{tikzpicture}
      \pgfmathsetmacro\numsegments{6}
      \pgfmathsetmacro\radius{1/2}
      \pgfmathsetmacro\oradius{4 * \radius}
      \pgfmathsetmacro\crado{(\radius+\oradius)/2}
      \pgfmathsetmacro\pca{10}
      
      \begin{scope}[thick]
        \drawpetal{0}{1}{1}{\numsegments}{\radius}{\oradius}{\crado}{\pca}
        \drawpetal{1}{2}{2}{\numsegments}{\radius}{\oradius}{\crado}{\pca}
        \drawpetal{2}{3}{3}{\numsegments}{\radius}{\oradius}{\crado}{\pca}
        \drawpetal{3}{4}{1}{\numsegments}{\radius}{\oradius}{\crado}{\pca}
        \drawpetal{4}{5}{4}{\numsegments}{\radius}{\oradius}{\crado}{\pca}
        \drawpetal{5}{6}{2}{\numsegments}{\radius}{\oradius}{\crado}{\pca}
        
        \foreach \i in {1,...,\numsegments}
        {
          \pgfmathsetmacro\angle{\i * 360 / \numsegments}
          \draw (0,0) -- (\angle:\radius);
          \node[vertex] at (\angle:\radius) {};
        }
      \end{scope}
    \end{tikzpicture}
    \caption{A jasmine with six petals.}
    \figlabel{jasmine}
  \end{subfigure}
  \caption{Examples of forms of disk diagrams used in the definition
    of small cancellation conditions.}
\end{figure}

\begin{defn*}
  A \defterm{daisy} $D$ with $p$ petals is a disc diagram satisfying
  the following conditions.
  \begin{enumerate}
  \item It has a \defterm{central $2$-cell} $F$ whose boundary is
    covered by $p$ arcs.
  \item Each arc of $F$ is given by its intersection with a $2$-cell of
    $D$ called a \defterm{petal}, whose remaining boundary is a
    boundary arc.
  \item The boundary $\bd D$ of $D$ is the concatenation of the
    boundary arcs of its petals.
  \end{enumerate}
  See \Figref{daisy} for an example of a daisy.
\end{defn*}

\begin{defn*}
  For $q \ge 3$, a \defterm{jasmine} $D$ with $q$ petals is a disc
  diagram satisfying the following conditions.
  \begin{enumerate}
  \item There is a single internal $0$-cell $v$ in $D$.
  \item There are $q$ $2$-cells in $D$ called \defterm{petals}, all of
    which are incident to $v$.
  \item The internal arcs of $D$ are all embedded $1$-cells, each of
    which is incident to $v$.
  \item The boundary $\bd D$ of $D$ is the concatenation of the
    boundary arcs of its petals.
  \end{enumerate}
  See \Figref{jasmine} for an example of a jasmine.
\end{defn*}

A $2$-complex $Y$ satisfies the \defterm{$\scC(p)$ property} if its
$2$-cells are immersed and any daisy in $Y$ with fewer than $p$ petals
is foldable along an arc of its central $2$-cell.  A $2$-complex $Y$
satisfies the \defterm{$\scT(q)$ property} if its $2$-cells are immersed
and any jasmine in $Y$ with fewer than $q$ petals is foldable along
one of its internal arcs.  A group presentation $\gpres{\mcS}{\mcR}$
satisfies $\scC(p)$ or $\scT(q)$ if its associated complex does.  Note
that if $D$ is a reduced disc diagram in a $\scC(p)$ (resp. $\scT(q)$)
complex, then $D$ is $\scC(p)$ (resp. $\scT(q)$).  If $D$ is a disc
diagram in a $2$-complex of minimal area or one with the least number of
cells of any dimension then it is reduced.  If a $2$-complex $Y$ is
$\scC(p)$ or $\scT(q)$ then so is its universal cover $\ucov{Y}$.  If
a group $G$ has a finite $\scC(p)$ (resp. $\scT(q)$) presentation
$Y = \gpres{\mcS}{\mcR}$ then it acts freely and cocompactly on a
simply connected $\scC(p)$ (resp. $\scT(q)$) complex, namely the
universal cover $\ucov{Y}$ of the presentation.

\subsubsection{\texorpdfstring{$\cftf$}{C(4)-T(4)} Complexes}
\ssseclabel{cftfcomp}

A $\cftf$ complex is one satisfying both the $\scC(4)$ and $\scT(4)$
properties.  These include, for example, $\CAT(0)$ square complexes.
We develop well-known disc diagrammatic tools in this section for the
study of $\cftf$ complexes.  We use these tools to present standard
proofs that $2$-cells of $\cftf$ complexes are embedded and of the
Strong Helly Theorem, which is required in the proof that $\cftf$
groups are quadric.

Let $D$ be a $\cftf$ disc diagram.  The \defterm{curvature of a
  $2$-cell} $F$ of $D$ is
\[ \kappa(F) = 2\pi - \frac{\nu(F)}{2}\pi, \]
where $\nu(F)$ is the number of nodes on the boundary of $F$.  Let
$\delta(v)$ denote the valence of a node $v$ and $\rho(v)$ the number
of $2$-cells incident to $v$.  The \defterm{curvature of a node} $v$ of
$D$ is
\[ \kappa(v) = 2\pi - \delta(v)\pi + \frac{\rho(v)}{2}\pi. \]

\begin{prop}[Gauss-Bonnet Theorem for $\cftf$ Disc Diagrams]
  \proplabel{gbcftf} Let $D$ be a disc diagram.  The sum of the
  curvatures of nodes and $2$-cells of $D$ is $2\pi$, i.e.,
  \[ \sum_{\text{$F$ $2$-cell}} \kappa(F) + \sum_{\text{$v$ node}}
  \kappa(v) = 2\pi.\]
\end{prop}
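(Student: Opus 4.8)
The plan is to mimic the combinatorial Gauss–Bonnet argument already used for $\CAT(0)$ disc diagrams (\Propref{gbcat}), adapting the curvature bookkeeping to the cellular decomposition of a general disc diagram into nodes, arcs, and $2$-cells. The underlying topological fact is that $D$ is contractible, so $\chi(D) = 1$. I would compute $\chi(D)$ from a coarsened CW-structure whose $0$-cells are the nodes of $D$, whose $1$-cells are the arcs of $D$, and whose $2$-cells are the original $2$-cells of $D$. This is legitimate precisely because an arc is by definition a maximal path through valence-$2$ vertices, so collapsing the interiors of arcs to single edges does not change the homotopy type and records the incidence data that the curvatures $\kappa(F)$ and $\kappa(v)$ are built from.

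First I would verify that, with this coarsened structure, the Euler characteristic is $\chi(D) = V - E + F = 1$, where $V$ is the number of nodes, $E$ the number of arcs, and $F$ the number of $2$-cells. Then I would redistribute the contributions exactly as in the proof of \Propref{gbcat}: each arc (contributing $-1$ to $\chi$) splits its $-\pi$ worth of angle evenly to its two node-endpoints, and each $2$-cell $F$ (contributing $+1$) distributes its angle to the nodes on its boundary and to itself via the deficit term. Concretely, one checks that
\[
  \frac{\kappa(v)}{2\pi} = 1 - \frac{\delta(v)}{2} + \frac{\rho(v)}{4}
\]
counts the vertex's net share of ($+1$ for the node, $-\tfrac12$ per incident arc-end, $+\tfrac14$ per incident $2$-cell corner), and that
\[
  \frac{\kappa(F)}{2\pi} = 1 - \frac{\nu(F)}{4}
\]
is what remains of the $+1$ from $F$ after its boundary nodes have each claimed their $\tfrac14$ share. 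Summing these two identities over all nodes and all $2$-cells must recover $V - E + F$, since every incidence is accounted for exactly once; this forces the total to equal $\chi(D) = 1$, and multiplying through by $2\pi$ yields the claimed formula.

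The main obstacle I expect is the correct handling of degenerate features of general (possibly singular) disc diagrams: spurs, cutpoints, closed arcs, and arcs both of whose sides abut the same $2$-cell. These do not arise in the $\CAT(0)$ square-complex setting, so the accounting of how many arc-ends and $2$-cell-corners are incident to each node must be done carefully, verifying that $\delta(v)$ (the valence of the node) and $\rho(v)$ (the number of incident $2$-cells) correctly tally the local contributions even when an arc is a loop at $v$ or when a $2$-cell meets $v$ along two distinct arcs. Once one confirms that the redistribution is a genuine bijective bookkeeping of incidences—so that no contribution is double-counted or dropped—the result follows immediately, just as in \Propref{gbcat}.
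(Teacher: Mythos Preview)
Your approach is essentially identical to the paper's: compute $\chi(D)=1$ using the coarsened cell structure of nodes, arcs, and $2$-cells, then redistribute the $-1$ of each arc to its endpoint nodes and shift a $\tfrac14$ per node--face incidence from the face sum to the node sum, recognizing the results as $\kappa(v)/2\pi$ and $\kappa(F)/2\pi$. The paper carries this out as a four-line chain of equalities without commenting on the degenerate cases you flag; your caution there is reasonable but not something the paper dwells on.
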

\begin{proof}
  Let $D^{(N)}$, $D^{(A)}$ and $D^{(2)}$ be the set of nodes, arcs and
  $2$-cells of $D$.  Then the proposition follows from the below
  computation.
  \begin{align*}
    1 = \chi(D) &= \sum_{v \in D^{(N)}} 1 - \sum_{\alpha \in D^{(A)}} 1
                  + \sum_{F \in D^{(2)}} 1 \\
                &= \sum_{v \in D^{(N)}} \biggl(1 - \frac{\delta(v)}{2}\biggr)
                  + \sum_{F \in D^{(2)}} 1 \\
                &= \sum_{v \in D^{(N)}} \biggl(1 - \frac{\delta(v)}{2}
                  + \frac{\rho(v)}{4}\biggr) +
                  \sum_{F \in D^{(2)}} \biggl(1 - \frac{\nu(F)}{4}\biggr) \\
                &= \frac{1}{2\pi}\sum_{v \in D^{(N)}} \kappa(v) +
                  \frac{1}{2\pi}\sum_{F \in D^{(2)}} \kappa(F)
  \end{align*}
\end{proof}

Let $D$ be a disc diagram.  A \defterm{boundary $2$-cell} of $D$ is a
$2$-cell of $D$ with a boundary arc.  A \defterm{site of positive
  curvature} on the boundary $\bd D$ of a disc diagram $D$ is the tip
of a spur or a boundary $2$-cell with fewer than four nodes on its
boundary.

\begin{cor}[Greendlinger's Lemma for $\cftf$ Disc Diagrams]
  \corlabel{greendlingercftf} Let $D$ be a $\cftf$ disc diagram and
  assume that $D$ is not a single $0$-cell or $2$-cell.  Then there are at
  least two positive curvature sites on the boundary $\bd D$ of $D$.
  If $D$ has no spurs and every boundary $2$-cell of $D$ has at least
  three nodes on its boundary then there are at least four positive
  curvature sites on $\bd D$, each of which consists of a $2$-cell with
  three nodes on its boundary.
\end{cor}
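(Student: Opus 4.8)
The plan is to run the standard small-cancellation curvature argument through the Gauss--Bonnet formula of \Propref{gbcftf}, treating the positive curvature sites as the only reservoirs of positive curvature. First I would record the curvature of each type of cell and node. For a $2$-cell $F$ we have $\kappa(F) = (4 - \nu(F))\frac{\pi}{2}$, which is positive exactly when $\nu(F) \le 3$. For a boundary node that is not a spur tip I would check, using $\delta(v) = \rho(v) + 1$ at a non-cutpoint boundary node (and the strictly more negative estimate at a cutpoint, exactly as in the proof of \Lemref{pospos}), that $\kappa(v) = (2 - \rho(v))\frac{\pi}{2} \le 0$; the only positive value $\kappa(v) = \pi$ arises when $v$ is a spur tip, where $\delta(v) = 1$ and $\rho(v) = 0$. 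A valence-$2$ vertex is not a node, so the intermediate case $\rho(v) = 1$ does not occur.

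The key step is to show that the interior contributes no positive curvature. Taking $D$ reduced (so that it has no foldable arcs), I would invoke the two small-cancellation hypotheses. An internal $2$-cell $F$ with $\nu(F) < 4$ is the central $2$-cell of a daisy with fewer than four petals, so $\scC(4)$ forces one of its arcs to be foldable, a contradiction; hence $\nu(F) \ge 4$ and $\kappa(F) \le 0$. Dually, an internal node $v$, which is necessarily of valence at least $3$, with $\rho(v) < 4$ is the centre of a jasmine with fewer than four petals, so $\scT(4)$ forces an internal arc to be foldable, again a contradiction; hence $\rho(v) \ge 4$, and since $\delta(v) = \rho(v)$ at an interior node this gives $\kappa(v) = (4 - \rho(v))\frac{\pi}{2} \le 0$. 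The upshot is that every positively curved node or $2$-cell lies on $\bd D$ and is precisely one of the declared positive curvature sites: a spur tip, or a boundary $2$-cell with fewer than four nodes.

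With this established, the conclusion is a counting argument. Because the total curvature is $2\pi$ and every non-site contributes at most $0$, the positive curvature sites together carry curvature at least $2\pi$. Since $D$ is not a single $2$-cell, no $2$-cell is nodeless, so each site carries at most $\frac{3\pi}{2}$ (a boundary $2$-cell with a single node; a spur tip carries $\pi$, and boundary $2$-cells with two or three nodes carry $\pi$ and $\frac{\pi}{2}$ respectively). As $\frac{3\pi}{2} < 2\pi$, a single site cannot account for all the curvature, giving at least two sites. For the second statement, the hypotheses delete the spur tips and the boundary $2$-cells with one or two nodes, so every remaining site is a boundary $2$-cell with exactly three nodes and hence carries precisely $\frac{\pi}{2}$; since these must sum to at least $2\pi$, there are at least four of them, each of the asserted form.

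I expect the main obstacle to be the bookkeeping that certifies non-positivity of the interior curvature: specifically, justifying that $D$ may be assumed reduced and correctly exhibiting the daisy around an internal $2$-cell and the jasmine around an internal node so that $\scC(4)$ and $\scT(4)$ genuinely apply, together with the careful handling of cutpoints and of valence-$2$ (non-node) vertices in the boundary node analysis. Once every interior and non-site boundary contribution is shown to be at most $0$, the final tallies of two and four sites follow immediately from the per-site curvature bounds $\frac{3\pi}{2}$ and $\frac{\pi}{2}$.
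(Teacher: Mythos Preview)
Your approach is essentially the paper's: localize all positive curvature to spur tips and boundary $2$-cells via $\scC(4)$ and $\scT(4)$, then count against the Gauss--Bonnet total of $2\pi$. One correction: the phrase ``taking $D$ reduced'' is misplaced. The hypothesis is that $D$ is a $\cftf$ disc diagram \emph{as a standalone complex}, not a diagram in some ambient $\cftf$ complex, so there is nothing to reduce. The reason a daisy or jasmine in $D$ with fewer than four petals yields a contradiction is not that $D$ is reduced but that foldability would force two distinct $2$-cells of the planar disc diagram $D$ to share a boundary path, which is impossible. The paper's proof absorbs this into the terse phrases ``by the $\scT(4)$ property'' and ``by the $\scC(4)$ property''; your unpacking is fine once you replace ``$D$ reduced'' with this planarity observation.
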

\begin{proof}
  An internal node $v$ of $D$ has $\rho(v) = \delta(v)$ and so has
  curvature
  \[ \kappa(v) = 2\pi - \delta(v)\pi + \frac{\delta(v)}{2}\pi = 2\pi -
  \frac{\delta(v)}{2}\pi, \]
  which, by the $\scT(4)$ property, is nonpositive.  For a boundary node
  $v$, $\rho(v) < \delta(v)$ so that,
  \[ \kappa(v) < 2\pi - \frac{\delta(v)}{2}\pi. \]
  But $\kappa(v)$ is an integer multiple of $\frac{\pi}{2}$, so if
  $\delta(v) \ge 3$ then $\kappa(v)$ is nonpositive.  But
  $\delta(v) \neq 0$, by assumption, and $\delta(v) \neq 2$, by
  \Defnref{arc}, so only if a node $v$ has valence $\delta(v) = 1$,
  i.e., $v$ is the tip of a spur, can it have positive curvature and
  in this case \[\kappa(v) = \pi.\]
  
  By assumption, $D$ is not a single $2$-cell and so $\kappa(F)$ is
  always less than $2\pi$ for a $2$-cell $F$ of $D$.  If $F$ has no
  boundary arc then, by the $\scC(4)$ property, $\kappa(F) \le 0$.  So
  only boundary $2$-cells can have positive curvature.  Then the only
  positive curvature in $D$ comes from boundary $2$-cells and tips of
  spurs, neither of which have curvature $2\pi$.  It follows, by
  \Propref{gbcftf}, that there are at least two sites of positive
  curvature on $\bd D$.
  
  If we add the assumption that $D$ has no spurs and all its boundary
  $2$-cells have at least three nodes then the only sites of positive
  curvature on $\bd D$ are boundary $2$-cells with exactly three nodes.
  These provide only $\frac{\pi}{2}$ curvature and so, by
  \Propref{gbcftf}, there must be at least four of them.
\end{proof}

\begin{prop}
  \proplabel{cftfemb} Let $Y$ be a simply connected $\cftf$ complex.
  Then every $2$-cell $F_1$ of $Y$ is embedded.
\end{prop}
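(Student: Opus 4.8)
The plan is to argue by contradiction using a disc diagram together with the Greendlinger's Lemma for $\cftf$ disc diagrams (\Corref{greendlingercftf}). Suppose the attaching map $\phi \colon \bd F_1 \to Y$ is not injective. Since $2$-cells are immersed, $\phi$ is locally injective, so the failure of injectivity forces a proper closed subpath of $\bd F_1$: there are distinct points of $\bd F_1$ with the same image. Choosing such an identification to be innermost, I obtain a subpath $a \subseteq \bd F_1$ whose image $\gamma = \phi(a)$ is an embedded cycle of $Y$ of length strictly less than $|\bd F_1|$. As $Y$ is simply connected, $\gamma$ bounds a disc diagram; I take $E$ of minimal area, so that $E$ is reduced and hence $\cftf$.

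The key step is to glue $E$ to $F_1$ along $a$, identifying $\bd E$ with the subarc $a$ of $\bd F_1$. Because $\gamma = \bd E$ is simultaneously a subpath of $\bd F_1$, every edge of $\gamma$ lies on the boundary of both $F_1$ and a $2$-cell of $E$; thus, in the glued diagram, each $2$-cell of $E$ meeting $\gamma$ shares that portion of $\gamma$ with $F_1$ as an arc common to the boundaries of two distinct $2$-cells. Choosing the whole configuration (the cell $F_1$, the subpath $a$, and the filling $E$) to minimize the area of $E$ should guarantee that no $2$-cell of $E$ folds with $F_1$ along $\gamma$, since such a fold would produce a strictly smaller instance of the same situation.

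Now I apply \Corref{greendlingercftf} to $E$. Since $\bd E = \gamma$ is an embedded, hence immersed, cycle, $E$ has no spurs. If $E$ is a single $2$-cell $F'$, then its entire boundary $\bd F' = \gamma$ is shared with $F_1$, giving a daisy with central cell $F'$ and a single petal, which contradicts $\scC(4)$. Otherwise Greendlinger's Lemma yields a boundary $2$-cell $F''$ of $E$ with at most three nodes; its boundary arc lies along $\gamma \subseteq \bd F_1$ and is therefore shared with $F_1$, while its remaining internal arcs are shared with other $2$-cells of $E$. Hence $\bd F''$ is covered by fewer than four arcs, producing a daisy with central $2$-cell $F''$, one petal equal to $F_1$, and fewer than four petals total; by the minimality-enforced absence of foldable arcs this daisy is not foldable, contradicting $\scC(4)$. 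This contradiction shows $\phi$ is injective, so $F_1$ is embedded.

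The main obstacle I anticipate is the bookkeeping around reducedness and shared arcs. I must verify that the minimal choice of $E$ genuinely forbids a fold between $F_1$ and $E$ along $\gamma$ (the descent argument), that $F_1$ cannot coincide with the cell it shares an arc with (ruled out by comparing lengths, since $|\gamma| < |\bd F_1|$), and that the boundary arc of a low-node boundary $2$-cell is a single shared arc so that the assembled configuration is a legitimate daisy with the claimed number of petals. Converting the curvature output of \Corref{greendlingercftf} into a bona fide non-foldable daisy of fewer than four petals is the crux; once that translation is in place, the contradiction with $\scC(4)$ is immediate.
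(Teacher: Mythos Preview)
Your proposal is correct and follows essentially the same approach as the paper. The paper gives a unified proof scheme for this proposition together with \Propref{cftfint} and \Propref{helly}: assume a counterexample, fill the loop $\gamma_1 \subset \bd F_1$ with a disc diagram $D$ minimizing the number of cells, glue $F_1$ along $\gamma_1$ to form $D'$, argue that $D'$ is reduced (via a cut-and-reglue descent), and then apply \Corref{greendlingercftf} to $D$ to force a boundary $2$-cell with at most three nodes away from the pinch point $v_1$, yielding a non-foldable daisy with fewer than four petals and contradicting $\scC(4)$. Your version differs only in minor bookkeeping---you take $\gamma$ innermost rather than minimizing over all $(\gamma, D)$, and you form the daisy directly rather than passing through $D'$---but the structure and the key lemma are the same. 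The obstacles you flag (verifying the descent for reducedness, translating a low-node boundary $2$-cell into a legitimate daisy) are precisely the points the paper's proof scheme handles, and your outline of how to resolve them is sound.
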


\begin{prop}
  \proplabel{cftfint} Let $Y$ be a simply connected $\cftf$ complex
  and let $F_1$ and $F_2$ be a pair of intersecting $2$-cells of $Y$.
  Then the intersection $F_1 \cap F_2$ of these $2$-cells is connected.
\end{prop}

\begin{cor}
  \corlabel{cftfint} $2$-cells of a simply connected $\cftf$ complex $Y$
  intersect along paths, possibly closed, in its $1$-skeleton
  $\sk{1}{Y}$.
\end{cor}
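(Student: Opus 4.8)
The plan is to read off the corollary directly from the two preceding propositions, using the elementary fact that distinct closed cells meet only in the $1$-skeleton. First I would observe that if $F_1$ and $F_2$ are distinct $2$-cells then their open cells are disjoint, so $F_1 \cap F_2$ contains no interior point of either cell. Since $\bd F_1$ and $\bd F_2$ lie in $\sk{1}{Y}$ and the $1$-skeleton is disjoint from every open $2$-cell, this forces
\[ F_1 \cap F_2 = \bd F_1 \cap \bd F_2 \subseteq \sk{1}{Y}, \]
and, being an intersection of subcomplexes, it is a subcomplex of $\sk{1}{Y}$.

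Next I would invoke \Propref{cftfemb}: because each $F_i$ is embedded, its attaching map is injective, so each boundary $\bd F_i$ is an embedded cycle, i.e.\ a simple closed path in $\sk{1}{Y}$. In particular, $F_1 \cap F_2$ is a subcomplex of the cycle graph $\bd F_1$.

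Finally I would apply \Propref{cftfint}, which guarantees that $F_1 \cap F_2$ is connected. A connected subcomplex of a cycle graph is either a single vertex, a nonclosed path, or the whole cycle; in every case it is a path, possibly closed. This is exactly the assertion of the corollary.

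The argument is essentially bookkeeping, since the two propositions do all the real work. The only point demanding a moment's care is the topological observation that the interiors of distinct cells are disjoint, which is what pushes the intersection into the $1$-skeleton and lets the trivial classification of connected subgraphs of a cycle take over. I therefore anticipate no genuine obstacle here; the one caveat is to read the statement as applying to \emph{distinct} intersecting $2$-cells, since otherwise the intersection is the whole cell rather than a path.
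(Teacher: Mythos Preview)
Your argument is correct and is precisely the reasoning the paper intends: the corollary is stated without proof, immediately after \Propref{cftfemb} and \Propref{cftfint}, and is meant to be read off from those two propositions exactly as you do. Your caveat that the statement should be read for \emph{distinct} $2$-cells is also appropriate.
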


\begin{prop}[Helly Property]
  \proplabel{helly} Let $F_1$, $F_2$ and $F_3$ be pairwise
  intersecting $2$-cells of a simply connected $\cftf$ complex $Y$.
  Then $F_1 \cap F_2 \cap F_3$ is nonempty.
\end{prop}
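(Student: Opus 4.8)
The plan is to argue by contradiction, assuming $F_1 \cap F_2 \cap F_3 = \emptyset$ and building a disc diagram whose curvature is incompatible with \Corref{greendlingercftf}. First I would use \Corref{cftfint} to write each pairwise intersection $F_i \cap F_j$ as a path $p_{ij}$ in $\sk{1}{Y}$, noting by \Propref{cftfemb} that each $\bd F_i$ is an embedded cycle carrying the two arcs $p_{ij}$ and $p_{ik}$. If the two arcs lying on a common $\bd F_i$ share a vertex, that vertex lies in $F_1 \cap F_2 \cap F_3$ and we are done; so I may assume $p_{12}, p_{23}, p_{31}$ are pairwise disjoint. Then on each $\bd F_i$ these two arcs are separated by two complementary arcs of positive length, an \emph{inner} arc $a_i$ and an \emph{outer} arc $b_i$.

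Next I would assemble the diagram. Gluing $F_1, F_2, F_3$ along the $p_{ij}$ produces an annular complex with inner boundary the loop $\gamma = a_1 a_2 a_3$ (whose three corners are the inner endpoints of the $p_{ij}$) and outer boundary $b_1 b_2 b_3$. Since $Y$ is simply connected, $\gamma$ bounds a disc diagram $E$, and capping the hole gives a disc diagram $D = E \cup F_1 \cup F_2 \cup F_3$ with $\bd D = b_1 b_2 b_3$. Two degenerate possibilities for $E$ are dispatched immediately: $\gamma$ is reduced (it cannot backtrack at a corner, since the relevant edges would lie in a one-point $p_{ij}$), so $E$ is nonempty; and $E$ cannot be a single $2$-cell, since such a cell would be an \emph{interior} cell of $D$ with only three nodes, violating $\scC(4)$. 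Thus $E$ has at least two cells, all of which sit in the interior of $D$. Consequently the only boundary $2$-cells of $D$ are $F_1, F_2, F_3$, the corner vertices are interior, and $D$ has no spurs.

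I would then invoke \Corref{greendlingercftf}. Taking $D$ reduced, it is not a single cell and has no spurs; if moreover each $F_i$ has at least three nodes, the corollary forces at least four sites of positive curvature, each a boundary $2$-cell with exactly three nodes. But $D$ has only the three boundary $2$-cells $F_1, F_2, F_3$, so at most three such sites exist --- a contradiction. (Equivalently, via \Propref{gbcftf}: every interior $2$-cell is nonpositively curved by $\scC(4)$, every node is nonpositively curved by $\scT(4)$ or lies on $\bd D$ with valence at least $2$, and three boundary cells with at least three nodes contribute at most $\frac{3\pi}{2} < 2\pi$.) Hence $F_1 \cap F_2 \cap F_3 \neq \emptyset$.

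The step I expect to be the main obstacle is controlling the node counts on the $F_i$ so that \Corref{greendlingercftf} genuinely applies, together with keeping $D$ reduced. On the reduction side, a fold cannot merge two of the $F_i$ (distinct $2$-cells of $Y$ are never foldable), and the embeddedness of $\bd F_i$ from \Propref{cftfemb} with the connectedness of intersections from \Corref{cftfint} should prevent a cell of $E$ from folding against $F_i$ across $a_i$, so that $F_1, F_2, F_3$ survive as distinct boundary cells. On the curvature side, the delicate case is when some $p_{ij}$ degenerates to a single vertex, leaving an $F_i$ with only two nodes (curvature $\pi$); here the hypothesis of the four-site conclusion fails, and one must instead exploit the $\scT(4)$ condition at the corner vertex $x_{ij}$ --- around which $F_i$, $F_j$ and at least two cells of $E$ must arrange themselves --- to either restore the node count or expose a vertex common to all three cells. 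Handling this boundary bookkeeping cleanly is where the real work lies; the global contradiction ``three corners cannot host four positive sites'' is the organizing principle.
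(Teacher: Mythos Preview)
Your global strategy---assume the triple intersection is empty, assemble a disc diagram with the three $F_i$ arranged around an inner filling, and extract a contradiction from \Corref{greendlingercftf}---is the paper's strategy. But the paper executes it differently in two respects, and these differences dissolve exactly the obstacles you flag at the end.

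First, the paper does not glue along the full intersection paths $p_{ij}$. It merely picks a single $0$-cell $v_i \in \bd F_i \cap \bd F_{i+1}$ and an embedded path $\gamma_i \subset \bd F_i$ from $v_{i-1}$ to $v_i$, fills the loop $\gamma_1\gamma_2\gamma_3$ by a disc diagram $D$, and sets $D' = D \cup F_1 \cup F_2 \cup F_3$. This sidesteps your ``degenerate $p_{ij}$'' worry: nothing is assumed about the shape of $F_i \cap F_j$, and indeed the same proof scheme simultaneously establishes \Propref{cftfemb} and \Propref{cftfint}, so one cannot freely invoke those inside it.

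Second---and this is what repairs your reducedness and node-count gaps---the paper does not fix a counterexample and then try to reduce. It lets the $v_i$, the $\gamma_i$, and the filling $D$ all vary, and chooses them to minimize the number of cells of $D$. Minimality then \emph{forces} $D'$ to be reduced: a foldable arc between some $F_i$ and a cell of $D$ lets one slide $\gamma_i$ across that cell and strictly shrink $D$. The same minimality kills spurs of $D$ and ensures every boundary $2$-cell of $D$ has at least three nodes. Greendlinger is then applied to the \emph{inner} diagram $D$ (not to your outer $D$): it yields four positively curved boundary $2$-cells of $D$, and since there are only three corners $v_i$, one such cell has its boundary arc entirely inside a single $\gamma_i$ and hence folds against $F_i$ in $D'$---contradiction.

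So your outline is sound, but the ``real work'' you anticipate is not done by case analysis; it is absorbed into a single minimality argument over all candidate $(v_i,\gamma_i,D)$.
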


\newcommand{\drawsausages}[8]{
  \begingroup
  \pgfmathsetmacro\numsegments{#1}
  \pgfmathsetmacro\radius{#2}
  \pgfmathsetmacro\oradius{#3}
  \pgfmathsetmacro\iradius{#4}
  \pgfmathsetmacro\ocrado{#5}
  \pgfmathsetmacro\icrado{#6}
  \pgfmathsetmacro\opca{#7}
  \pgfmathsetmacro\ipca{#8}

  \node at (0,0) {$D$};

  \begin{scope}[thick]
    \foreach \i in {1,...,\numsegments}
    {
      \drawpetal{\i}{\i+1}{1}{\numsegments}{\radius}{\oradius}{\ocrado}{\opca}
      \drawpetal{\i}{\i+1}{1}{\numsegments}{\radius}{\iradius}{\icrado}{\ipca}
      \pgfmathsetmacro\angle{\i * 360 / \numsegments}
      \pgfmathsetmacro\tangle{\angle+90}
      \pgfmathsetmacro\offangle{\angle-180/\numsegments}
      \pgfmathsetmacro\oppoffangle{\offangle+180}
      \draw [decorate, decoration={markings,mark=at
        position \offangle/360 with
        {\arrow{>},\node[label={[label distance=-1mm]\oppoffangle:$\gamma_\i$}] {};}}]
      (0:\iradius) arc (0:360:\iradius);
      \node[vertex,label={\tangle:$v_\i$}] at (\angle:\radius) {};
      \node at (\offangle:\radius) {$F_\i$};
    }
  \end{scope}
  
  \endgroup
}

\begin{figure}
  \centering
  \begin{subfigure}[b]{0.33\textwidth}
    \centering
    \begin{tikzpicture}
      \pgfmathsetmacro\numsegments{1}
      \pgfmathsetmacro\radius{7/6}
      \pgfmathsetmacro\oradius{1.5 * \radius}
      \pgfmathsetmacro\iradius{2*\radius/3}
      \pgfmathsetmacro\ocrado{2.75*\radius}
      \pgfmathsetmacro\icrado{0}
      \pgfmathsetmacro\opca{65}
      \pgfmathsetmacro\ipca{55}

      \path[use as bounding box] (-\oradius, -\oradius) rectangle (\oradius, \oradius);
      \drawsausages{\numsegments}{\radius}{\oradius}{\iradius}{\ocrado}{\icrado}{\opca}{\ipca}
    \end{tikzpicture}
    \caption{\Propref{cftfemb}}
    \figlabel{disc1gon}
  \end{subfigure}%
  \begin{subfigure}[b]{0.33\textwidth}
    \centering
    \begin{tikzpicture}
      \pgfmathsetmacro\numsegments{2}
      \pgfmathsetmacro\radius{5/4}
      \pgfmathsetmacro\oradius{1.5 * \radius}
      \pgfmathsetmacro\iradius{2*\radius/3}
      \pgfmathsetmacro\ocrado{(1.75*\radius}
      \pgfmathsetmacro\icrado{\radius/3}
      \pgfmathsetmacro\opca{40}
      \pgfmathsetmacro\ipca{40}
      
      \path[use as bounding box] (-\oradius, -\oradius) rectangle (\oradius, \oradius);
      \drawsausages{\numsegments}{\radius}{\oradius}{\iradius}{\ocrado}{\icrado}{\opca}{\ipca}
    \end{tikzpicture}
    \caption{\Propref{cftfint}}
    \figlabel{disc2gon}
  \end{subfigure}%
  \begin{subfigure}[b]{0.33\textwidth}
    \centering
    \begin{tikzpicture}
      \pgfmathsetmacro\numsegments{3}
      \pgfmathsetmacro\radius{4/3}
      \pgfmathsetmacro\oradius{1.5 * \radius}
      \pgfmathsetmacro\iradius{2*\radius/3}
      \pgfmathsetmacro\ocrado{(\radius+3*\oradius)/4}
      \pgfmathsetmacro\icrado{\radius/3}
      \pgfmathsetmacro\opca{30}
      \pgfmathsetmacro\ipca{35}
      
      \path[use as bounding box] (-\oradius, -\oradius) rectangle (\oradius, \oradius);
      \drawsausages{\numsegments}{\radius}{\oradius}{\iradius}{\ocrado}{\icrado}{\opca}{\ipca}
    \end{tikzpicture}
    \caption{\Propref{helly}}
    \figlabel{disc3gon}
  \end{subfigure}
  \caption{Disc diagrams from the proofs of various propositions.}
  \figlabel{discngons}
\end{figure}

\begin{proof}[Proof scheme of Propositions \propref{cftfemb},
  \propref{cftfint} and \propref{helly}]
  Each proof follows the same pattern.  We assume the statement does
  not hold, and let the $F_i$, $\gamma_i$ and $v_i$ be a
  counterexample, where $v_i$ is a $0$-cell on the boundary of $F_i$
  and $F_{i+1}$ and $\gamma_i$ is an embedded path from $v_{i-1}$ to
  $v_i$ on the boundary of $F_i$.  Let $D$ be a disc diagram in $Y$
  with boundary path the concatenation of the $\gamma_i$, as in
  \Figref{discngons}.  Now, pick the counterexample
  $(F_i, \gamma_i, v_i)_i$ and $D$ so as to minimize the total number
  of cells in $D$.  Let $D' = D \cup \bigcup_i F_i$ be disc diagram
  obtained from $D$ by gluing in the $F_i$ along the $\gamma_i$, as in
  \Figref{discngons}.
  
  We claim that $D'$ is reduced.  If not then there would be a
  foldable arc $\alpha$ in some $\gamma_i$ between $F_i$ and some
  $2$-cell $F$ of $D$.  Then $\gamma_i$ is a concatenation
  $\beta_1\alpha\beta_2$ for some paths $\beta_1$ and $\beta_2$.  Let
  $\beta_1'$ be the path along $\bd F$ having the same length and
  terminal vertex as $\beta_1$ and having the same mapping as
  $\beta_1$ into $Y$.  Let $\beta_2'$ be the path along $\bd F$ having
  the same length and initial vertex as $\beta_1$ and having the same
  mapping as $\beta_1$ into $Y$.  We cut $D'$ open along the
  concatenations $\beta_1'\beta_1^{-1}$ and $\beta_2^{-1}\beta_2'$ and
  glue it back together to obtain a new disc diagram $D''$ in which
  $F$ and $F_i$ intersect along the entirety of $\gamma_i$.  But then
  the closure of $D'' \setminus F_i$ has fewer cells than $D'$,
  contradicting minimality of our choice.  So $D'$ is reduced.
  
  If $D$ is a single $0$-cell then the $F_i$ are not a counterexample,
  which is a contradiction.  Also $D$ cannot be a single $2$-cell
  since $D'$ is reduced and $Y$ is $\scC(4)$.  So $D$ is not a single
  $0$-cell or a single $2$-cell.  By minimality and the fact that
  $2$-cells are immersed, $D$ has no spurs.  Also, any boundary
  $2$-cell of $D$ that has fewer than three nodes would give a
  foldable arc of some $F_i$ in $D'$ so all boundary $2$-cells of $D$
  have at least three nodes.  Then, by \Corref{greendlingercftf},
  there are four sites of positive curvature on the boundary $\bd D$
  of $D$.  One of these must occur away from a $v_i$ and so give a
  foldable arc of an $F_i$ in $D'$, which again cannot be since $D'$
  is reduced.  So we have a contradiction.
\end{proof}

\begin{prop}[Strong Helly Property]
  \proplabel{stronghelly} Let $F_1$, $F_2$ and $F_3$ be pairwise
  intersecting $2$-cells of a simply connected $\cftf$ complex.  Then
  the intersection of a pair of the $2$-cells is contained in the
  remaining $2$-cell, i.e., \[F_{\sigma(1)} \cap F_{\sigma(2)} \subset
  F_{\sigma(3)}\] for some permutation $\sigma$ of the indices.
\end{prop}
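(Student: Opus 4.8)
The plan is to leverage the Helly Property (\Propref{helly}) and \Corref{cftfint} to reduce the statement to a purely combinatorial claim about arcs on the boundary circles of the $F_i$, and then to extract a parity contradiction. First I would record the basic structure. By \Propref{helly} the triple intersection $T := F_1 \cap F_2 \cap F_3$ is nonempty; by \Propref{cftfemb} each boundary $\bd F_i$ is an embedded cycle; and by \Corref{cftfint} each pairwise intersection $P_{ij} := F_i \cap F_j$ is a (possibly closed) path lying on both $\bd F_i$ and $\bd F_j$. Since $P_{ij} \subseteq F_i \cap F_j$, we have $P_{ij} \subseteq F_k$ if and only if $P_{ij} \subseteq T$, i.e. if and only if $P_{ij} = T$; so the proposition is equivalent to the assertion that $P_{ij} = T$ for some pair $\{i,j\}$.

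Assume for contradiction that $P_{ij} \supsetneq T$ for all three pairs. On the circle $\bd F_1$ the arcs $P_{12}$ and $P_{13}$ meet exactly in $T = P_{12} \cap P_{13}$, and likewise on $\bd F_2$ (arcs $P_{12}, P_{23}$) and on $\bd F_3$ (arcs $P_{13}, P_{23}$). Taking $T$ to be a single arc with ends $a$ and $b$, the key local observation is the following: on $\bd F_1$ neither $P_{12}$ nor $P_{13}$ can extend past the same end of $T$, since the first vertex beyond that end would then lie in $P_{12} \cap P_{13} = T$; and since each of $P_{12}, P_{13}$ strictly contains $T$, each must extend past at least one end. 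Hence of the two arcs on $\bd F_1$ exactly one extends past the $a$-end and exactly one past the $b$-end. Writing $\alpha_{ij} \in \{0,1\}$ for whether $P_{ij}$ extends past the $a$-end, the three cells give $\alpha_{12} + \alpha_{13} = 1$, $\alpha_{12} + \alpha_{23} = 1$ and $\alpha_{13} + \alpha_{23} = 1$; summing yields $2(\alpha_{12} + \alpha_{13} + \alpha_{23}) = 3$, which is impossible. This contradiction forces $P_{ij} = T$ for some pair, which is the desired conclusion.

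The main obstacle is the handling of the degenerate configurations glossed over above. First, one must know that $T$ is genuinely a single arc rather than a pair of disjoint arcs, since two arcs of a circle can in principle meet in two components. Second, there is the case in which $T$ is a single vertex $v$, where the two ``ends'' of $T$ are not canonically matched across the three boundary circles, so the parity bookkeeping does not immediately apply. I expect both to be controlled by the $\scT(4)$ property: in the single-vertex case the corners of $F_1, F_2, F_3$ at $v$ form an embedded triangle in the link of $v$, a configuration that, together with reducedness and the four positive-curvature sites of \Corref{greendlingercftf}, cannot occur; and the disconnected case can be excluded by the same minimal-disc-diagram-and-Greendlinger scheme used to prove \Propref{helly}.

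Carrying out these reductions carefully, in the style of the proof scheme for Propositions \propref{cftfemb}, \propref{cftfint} and \propref{helly}, is where the real work lies, after which the clean parity argument finishes the proof.
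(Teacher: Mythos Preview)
Your proposal is correct, but it is considerably more elaborate than what the paper does. The paper's entire proof is one sentence: the statement ``follows immediately from \Propref{cftfemb}, \Corref{cftfint}, \Propref{helly} and the $\scT(4)$ property.'' So the paper regards the deduction as routine once those four ingredients are in hand, and in particular treats $\scT(4)$ as the single decisive step rather than something to be invoked only in degenerate configurations.

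Your parity argument for the generic case (where $T$ is a nondegenerate arc with two distinct ends) is valid and rather elegant: the constraints $\alpha_{12}+\alpha_{13}=1$, $\alpha_{12}+\alpha_{23}=1$, $\alpha_{13}+\alpha_{23}=1$ really are mutually inconsistent, and you correctly observe that ``$P_{ij}$ extends past the $a$-end of $T$'' is intrinsic to the path $P_{ij}$ and hence well-defined across the three boundary circles. This is a different organization than the paper's, and it has the pleasant feature of not invoking $\scT(4)$ in the main case at all.

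On your two acknowledged gaps: you are right that the single-vertex case $T=\{v\}$ is precisely where $\scT(4)$ enters---the three cells form a $3$-petal jasmine at $v$, so two of them share their entire boundary, and then one pairwise intersection is a full boundary circle, forcing a containment. For the disconnected case, the disc-diagram scheme you propose would work but is heavier than needed. Chasing the endpoints of the two components of $T$ around the three boundary circles (using that $P_{12}\cup P_{13}=\bd F_1$ when $T$ is disconnected there, and similarly on $\bd F_2,\bd F_3$) forces both components of $T$ to be single vertices, and at each of those vertices you again get a $3$-petal jasmine. So both gaps close with $\scT(4)$, in line with the paper's one-line invocation.
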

\begin{proof}
  This follows immediately
  from \Propref{cftfemb}, \Corref{cftfint}, \Propref{helly} and the
  $\scT(4)$ property.
\end{proof}

\subsection{Quadrization of \texorpdfstring{$2$-Complexes}{2-Complexes}}
\sseclabel{quadrization}

We define now our main construction for this section, the quadrization
of a $2$-complex.  We use the properties developed above to study the
quadrization of $\cftf$ complexes and finally to prove that $\cftf$
groups are quadric.

Let $Y$ be a $2$-complex whose $2$-cells are embedded.  Let $Y_0$ and
$Y_2$ be the sets of $0$-cells and $2$-cells of $Y$.  Let $\Gamma_Y$ be
the bipartite graph on the vertex set $Y_0 \cup Y_2$ where an edge
joins $v \in Y_0$ and $F \in Y_2$ whenever $v$ appears on the boundary
of $F$.  The \defterm{quadrization} $X_Y$ of $Y$ is the $4$-flag
completion $X_Y = \overline \Gamma_Y$ of the graph $\Gamma_Y$.  Note
that if a group $G$ acts properly and cocompactly on $Y$ then it acts
properly and cocompactly on the quadrization $X_Y$.  If every $1$-cell
of $Y$ appears on the boundary of a $2$-cell then the quadrization $X_Y$
of $Y$ is connected.

For the rest of this section, we will assume that every $1$-cell of a
$2$-complex $Y$ is contained in the boundary of at least one $2$-cell.
This is not a serious restriction, since each $1$-cell of $Y$ not
appearing on the boundary of a $2$-cell can be subdivided (if it is not
embedded) and then thickened to a $2$-cell to obtain a new $2$-complex
$Y'$ which deformation retracts to $Y$.  The original complex $Y$
embeds $\Aut(Y)$-equivariantly into $Y'$ via a continuous map $Y \to
Y'$, which thus faithfully preserves group actions.  Furthermore, if
$Y$ is $\cftf$ then so is $Y'$.

\begin{figure}
  \centering
  \begin{tikzpicture}
    \pgfmathsetmacro\sp{15}
    \pgfmathsetmacro\irad{2.5}
    \pgfmathsetmacro\mrad{3.25}
    \pgfmathsetmacro\orad{4}
    \pgfmathsetmacro\crad{0.75}

    \path[use as bounding box]
      (-\irad/2-\mrad/2, -2) rectangle (\irad/2+\mrad/2, \orad+2/10);
    
    \begin{scope}[thick]
      \foreach \i in {-3,-1,1} {
        \pgfmathsetmacro\pango{90 + \i*\sp}
        \pgfmathsetmacro\pangt{90 + (\i+1)*\sp}
        \pgfmathsetmacro\pangr{90 + (\i+2)*\sp}
        
        \draw (\pango:\mrad)
          to[out=\pango, in=\pangt-90] (\pangt:\orad)
          to[out=\pangt+90, in=\pangr] (\pangr:\mrad)
          to[out=\pangr+180, in=\pangt+90] (\pangt:\irad)
          to[out=\pangt-90, in=\pango+180] (\pango:\mrad);
      }
      
      \draw[decorate,
        decoration={markings,mark=at position 1/2 with {\arrow{>}}}]
        (90+\sp:\mrad) to[out=90+15+180, in=90+90] (90:\irad);

      \draw[decorate,
        decoration={markings,mark=at position 1/2 with {\arrow{>>}}}]
        (90:\irad) to[out=90-90, in=90-\sp+180] (90-\sp:\mrad);

      \draw (90:\irad) arc (90:90+360:\crad);
      \node at (90:\irad-\crad) {$F$};
      
      \node at (90-2*\sp:\mrad) {$F_{2i+2}$};
      \node at (90:\mrad) {$F_{2i}$};
      \node at (90+2*\sp:\mrad) {$F_{2i-2}$};

      \node at (90+\sp:\orad+1/10) {$v_{2i-1}$};
      \node at (90-\sp:\orad+1/10) {$v_{2i+1}$};
      \node at (90:\irad-3/10) {$w$};

      \draw (90+3*\sp:\mrad)
        to[out=90+3*\sp+90, in=180] (0,-2)
        to[out=0, in=90-3*\sp-90] (90-3*\sp:\mrad);
      
      \node at (0,0) {$D \setminus F$};
    \end{scope}
    
    \node[vertex] at (90:\irad) {};
    \node[vertex] at (90-3*\sp:\mrad) {};
    \node[vertex] at (90-1*\sp:\mrad) {};
    \node[vertex] at (90+1*\sp:\mrad) {};
    \node[vertex] at (90+3*\sp:\mrad) {};
  \end{tikzpicture}%
  \begin{tikzpicture}
    \pgfmathsetmacro\sp{15}
    \pgfmathsetmacro\irad{2.5}
    \pgfmathsetmacro\mrad{3.25}
    \pgfmathsetmacro\orad{4}
    \pgfmathsetmacro\oorad{4.5}
    \pgfmathsetmacro\crad{0.75}

    \path[use as bounding box]
      (-\irad/2-\mrad/2, -2) rectangle (\irad/2+\mrad/2, \oorad);
    
    \begin{scope}[thick]
      \foreach \i in {-3,1} {
        \pgfmathsetmacro\pango{90 + \i*\sp}
        \pgfmathsetmacro\pangt{90 + (\i+1)*\sp}
        \pgfmathsetmacro\pangr{90 + (\i+2)*\sp}
        
        \draw (\pango:\mrad)
          to[out=\pango, in=\pangt-90] (\pangt:\orad)
          to[out=\pangt+90, in=\pangr] (\pangr:\mrad)
          to[out=\pangr+180, in=\pangt+90] (\pangt:\irad)
          to[out=\pangt-90, in=\pango+180] (\pango:\mrad);
      }
      
      \draw (90+\sp:\mrad)
        to[out=90+\sp+180, in=90+90] (90:\irad)
        to[out=90, in=30] (90+3*\sp/4:\oorad) node[vertex] {}
        to[out=210, in=90+\sp] (90+\sp:\mrad);

      \draw (90-\sp:\mrad)
        to[out=90-\sp+180, in=90-90] (90:\irad)
        to[out=90, in=150] (90-3*\sp/4:\oorad) node[vertex] {}
        to[out=330, in=90-\sp] (90-\sp:\mrad);

      \draw[decorate,
        decoration={markings,mark=at position 1/2 with {\arrow{>}}}]
        (90+\sp:\mrad) to[out=90+15+180, in=90+90] (90:\irad);

      \draw[decorate,
        decoration={markings,mark=at position 1/2 with {\arrow{>>}}}]
        (90:\irad) to[out=90-90, in=90-\sp+180] (90-\sp:\mrad);

      \draw[decorate,
        decoration={markings,mark=at position 3/5 with {\arrow{>>}}}]
        (90:\irad) to[out=90, in=30] (90+3*\sp/4:\oorad);

      \draw[decorate,
        decoration={markings,mark=at position 3/5 with {\arrow{<}}}]
        (90:\irad) to[out=90, in=150] (90-3*\sp/4:\oorad);

      \draw (90:\irad) arc (90:90+360:\crad);
      \node at (90:\irad-\crad) {$F$};
      
      \node at (90-2*\sp:\mrad) {$F_{2i+2}$};
      \node at (90+2*\sp:\mrad) {$F_{2i-2}$};

      \node at (90-4*\sp/7:\mrad/2+\orad/2) {$F_{2i}$};
      \node at (90+4*\sp/7:\mrad/2+\orad/2) {$F_{2i}$};
      
      \node at (90+6*\sp/4:\orad+1/10) {$v_{2i-1}$};
      \node at (90-6*\sp/4:\orad+1/10) {$v_{2i+1}$};
      \node at (90:\irad-3/10) {$w$};

      \draw (90+3*\sp:\mrad)
        to[out=90+3*\sp+90, in=180] (0,-2)
        to[out=0, in=90-3*\sp-90] (90-3*\sp:\mrad);
      
      \node at (0,0) {$D \setminus F$};
    \end{scope}
    
    \node[vertex] at (90:\irad) {};
    \node[vertex] at (90-3*\sp:\mrad) {};
    \node[vertex] at (90-1*\sp:\mrad) {};
    \node[vertex] at (90+1*\sp:\mrad) {};
    \node[vertex] at (90+3*\sp:\mrad) {};
  \end{tikzpicture}
  \caption[Two disc diagrams from the proof of \Lemref{quadsc}.]{The
    disc diagram $D$ from the proof of \Lemref{quadsc}.  If, as on the
    left, it has a $2$-cell $F$ then the disc diagram on the right
    exists, contradicting minimality of $D$.}
  \figlabel{quadsc}
\end{figure}

\begin{lem}
  \lemlabel{quadsc} Let $Y$ be a $2$-complex with embedded $2$-cells.  If
  $Y$ is simply connected then so is its quadrization $X_Y$.
\end{lem}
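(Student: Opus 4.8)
The plan is to show that $X_Y$ is connected (which is already noted to follow from the standing assumption that every $1$-cell of $Y$ lies on a $2$-cell) and that $\pi_1(X_Y)$ is trivial, by verifying that every combinatorial loop in the $1$-skeleton $\Gamma_Y = \sk{1}{X_Y}$ bounds a disc diagram of squares in $X_Y$. The key device will be to compare $X_Y$ with a subdivision of $Y$ itself, transporting a filling disc from $Y$, where one is guaranteed by simple connectivity, into a filling disc in $X_Y$.

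Concretely, first I would form the complex $Y'$ obtained from $Y$ by coning each $2$-cell $F$ off from an interior cone point $c_F$ to its boundary $\bd F$; since this is a subdivision, $Y'$ is homeomorphic to $Y$ and hence simply connected, and its $2$-cells are the triangles $(c_F, w, w')$ with $ww'$ an edge of $\bd F$. Identifying each cone point $c_F$ with the vertex $F \in Y_2$, the vertex set of $Y'$ becomes $Y_0 \cup Y_2$ and the spoke edges $c_F w$ (for $w$ on $\bd F$) are exactly the edges of $\Gamma_Y$; thus $\Gamma_Y$ is the subgraph of $\sk{1}{Y'}$ spanned by the spokes, the remaining edges of $Y'$ being the original $1$-cells of $Y$. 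Now, given a loop $\gamma$ in $\Gamma_Y$, I would regard it as a loop in $\sk{1}{Y'}$ and, using simple connectivity of $Y'$, fill it with a disc diagram $E \to Y'$ chosen to have minimal area.

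Because $\bd E = \gamma$ meets only spoke edges, every original $1$-cell of $Y$ appearing in $E$ is interior, and each triangle of $E$ has exactly one such edge (the side opposite its cone point). Each interior $Y$-edge $ww'$ is then shared by exactly two triangles $(c_{F_1}, w, w')$ and $(c_{F_2}, w, w')$; minimality of $E$ forces $F_1 \neq F_2$, since otherwise the two triangles fold together along $ww'$ and can be cancelled, lowering the area. Consequently $w, F_1, w', F_2$ is an embedded $4$-cycle of $\Gamma_Y$ and hence bounds a square of $X_Y = \overline{\Gamma_Y}$. Deleting all interior $Y$-edges therefore merges the triangles of $E$ in matched pairs into squares, yielding a disc diagram $D \to X_Y$ with the same underlying disc and the same boundary $\gamma$, and so exhibiting $\gamma$ as nullhomotopic. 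I expect the main obstacle to be precisely this last bookkeeping: one must check that the boundary carries no $Y$-edges, that the matching of triangles across interior $Y$-edges is perfect and yields genuinely embedded $4$-cycles, and that the degenerate case $F_1 = F_2$ is excluded by a reduced (minimal-area) choice of $E$ — this is exactly the replacement move illustrated in \Figref{quadsc}.
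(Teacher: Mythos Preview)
Your approach is correct and takes a genuinely different route from the paper's. The paper argues by contradiction: given an essential loop $\alpha$ in $X_Y$, it reads off the alternating sequence $F_0, v_1, F_2, v_3, \ldots$, joins consecutive $v_{2i\pm 1}$ by arcs on $\bd F_{2i}$ to obtain a closed path $\delta$ in $Y$, fills $\delta$ by a disc diagram $D$ in $Y$, and then minimizes simultaneously over $\alpha$, the connecting arcs, and $D$. A spur tip $v_{2i+1}$ can be pushed to its neighbour (producing a null $4$-cycle in $X_Y$), and any $2$-cell $F$ of $D$ meeting $\bd D$ inside some $\delta_{2i}$ can be absorbed by splicing $F_{2i}, w, F, w, F_{2i}$ into $\alpha$ and excising $F$ from $D$; minimality then forces $D$ to be a single $0$-cell and $\alpha$ to be null. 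Your stellar-subdivision route is more constructive: rather than iteratively editing both the loop and the filling, you fill once in $Y'$ and re-cellulate by pairing triangles across their unique $Y$-edge sides. The gain is that $\Gamma_Y$ sits inside $\sk{1}{Y'}$ verbatim, so no translation between $X_Y$ and $Y$ is needed and you get an explicit square filling; the paper's gain is that it never introduces an auxiliary complex. One small correction: \Figref{quadsc} depicts the paper's $2$-cell excision move in $Y$, not your fold of a matched triangle pair with $F_1 = F_2$ in $Y'$; the latter is simply the standard cancellation of two adjacent $2$-cells mapping to the same cell, which a minimal-area choice of $E$ indeed excludes.
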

\begin{proof}
  Suppose $X_Y$ is not simply connected.  Let $\alpha$ be any closed
  essential path $\alpha$ of $X_Y$.  In $Y$, $\alpha$ corresponds to a
  cyclic sequence $F_0$, $v_1$, $F_2$, $v_3$, \ldots, $F_{2n-2}$,
  $v_{2n-1}$, $F_0$ of $2$-cells and $0$-cells such that, for every $i$,
  the $2$-cell $F_{2i}$ contains the $0$-cells $v_{2i-1}$ and $v_{2i+1}$
  on its boundary (indices modulo $2n$).  Let $\beta_{2i}$ and
  $\gamma_{2i}$ be the two paths on the boundary of $F_{2i}$ from
  $v_{2i-1}$ to $v_{2i+1}$.  Let $\delta_0$, $\delta_2$, \ldots,
  $\delta_{2n-2}$ be any sequence with each
  $\delta_{2i} \in \{ \beta_{2i}, \gamma_{2i} \}$.  Then the
  concatenation $\delta = \delta_0 \delta_2 \cdots \delta_{2n-2}$ is a
  closed path in $Y$.  So there is a disc diagram $D$ in $Y$ with
  boundary path $\pbd D = \delta$.
  
  Now, choose $\alpha$, the $\delta_{2i}$ and $D$ so as to minimize
  the total number of $0$-cells, $1$-cells and $2$-cells of $D$.  Because
  the $2$-cells of $Y$ are embedded, any tip of a spur of $D$ must be
  one of the $0$-cells $v_{2i+1}$.  Let $u$ be the $0$-cell of the spur
  incident to this $v_{2i+1}$.  Then $F_{2i}$, $v_{2i+1}$, $F_{2i+2}$,
  $u$, $F_{2i}$ is a $4$-cycle in $X_Y$ and so is nullhomotopic.  But
  then we can replace $v_{2i+1}$ with $u$ in $\alpha$ and $D$ with
  $D \setminus e$, where $e$ is the $1$-cell of the spur joining $u$ and
  $v_{2i+1}$.  This contradicts the minimality of our choices and so
  $D$ has no spurs.
  
  Suppose $D$ has a $2$-cell.  Then $D$ must have a $2$-cell $F$ that
  intersects its boundary $\bd D$.  Let $w$ be a $0$-cell in the
  intersection of $F$ and $\bd D$.  Then $w$ is contained in some
  $\delta_{2i}$ and so some $F_{2i}$ in $X_Y$.  But then we can
  replace the subpath $F_{2i}$ in $\alpha$ with $F_{2i}$, $w$, $F$,
  $w$, $F_{2i}$ and replace $D$ with $D \setminus F$, as shown in
  \Figref{quadsc}.  This contradicts the minimality of our choices.
  So $D$ has no $2$-cells.
  
  Therefore, $D$ is a spurless tree, i.e., a single $0$-cell, $x$.  It
  follows that $v_{2i+1} = x$, for every $i$, so that $\alpha$ is
  $F_0$, $x$, $F_2$, $x$, \ldots, $F_{2n-2}$, $x$, $F_0$.  This path
  is clearly nullhomotopic, which is a contradiction.
\end{proof}

\begin{lem}
  \lemlabel{cftfq} Let $Y$ be a simply connected $2$-complex.  If $Y$ is
  $\cftf$ then its quadrization $X_Y$ is quadric.
\end{lem}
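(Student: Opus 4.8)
The plan is to verify that $X_Y$ satisfies condition~\itmref{sxcc} of \Propref{mgchar}: I will check that $X_Y$ is $4$-flag and simply connected, that its $1$-skeleton $\sk{1}{X_Y} = \Gamma_Y$ is simplicial, and that every $6$-cycle in $\Gamma_Y$ has a diagonal. The first three are essentially bookkeeping. Since $X_Y = \ff{\Gamma_Y}$ is by definition a $4$-flag completion, each embedded $4$-cycle of $\Gamma_Y$ bounds a unique square and the boundary of each square is an embedded $4$-cycle, so $X_Y$ is $4$-flag and, as no $2$-cells alter the $1$-skeleton, $\sk{1}{X_Y} = \Gamma_Y$. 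Simple connectivity is exactly \Lemref{quadsc}. That $\Gamma_Y$ is simplicial is immediate: it is bipartite, hence loopless, and the incidence relation ``$v$ lies on $\bd F$'' defines at most one edge between a given $0$-cell $v$ and $2$-cell $F$. Thus the real content is the claim about $6$-cycles.

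First I would record the form of a $6$-cycle in $\Gamma_Y$. As $\Gamma_Y$ is bipartite with parts $Y_0$ and $Y_2$, every $6$-cycle alternates between $0$-cells and $2$-cells and so reads
\[ F_1,\ v_1,\ F_2,\ v_2,\ F_3,\ v_3 \]
with the $F_i \in Y_2$ pairwise distinct, the $v_i \in Y_0$ pairwise distinct, and its edges recording $v_1 \in \bd F_1 \cap \bd F_2$, $v_2 \in \bd F_2 \cap \bd F_3$ and $v_3 \in \bd F_3 \cap \bd F_1$. In particular $F_1$, $F_2$ and $F_3$ are pairwise intersecting $2$-cells of $Y$. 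The three antipodal pairs of this cycle are $(v_1, F_3)$, $(v_2, F_1)$ and $(v_3, F_2)$, and a diagonal is exactly an edge of $\Gamma_Y$ joining such a pair, i.e., an incidence of the relevant $0$-cell with the opposite $2$-cell.

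The key step is to produce one such incidence from the $\cftf$ hypothesis. Applying the Strong Helly Property (\Propref{stronghelly}) to the pairwise intersecting $2$-cells $F_1$, $F_2$, $F_3$ yields a permutation $\sigma$ with $F_{\sigma(1)} \cap F_{\sigma(2)} \subset F_{\sigma(3)}$. The $0$-cell of the cycle shared by $F_{\sigma(1)}$ and $F_{\sigma(2)}$ then lies in $F_{\sigma(3)}$; for example, if $\{\sigma(1), \sigma(2)\} = \{1,2\}$ this $0$-cell is $v_1$, and $v_1 \in F_1 \cap F_2 \subset F_3$. Being a $0$-cell, $v_1$ lies on $\bd F_3$ rather than in the open cell (the intersection of $2$-cells is carried by the $1$-skeleton, by \Corref{cftfint}), so $v_1 \in \bd F_3$ is an edge of $\Gamma_Y$. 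As $v_1$ and $F_3$ are antipodal on the cycle, this is the desired diagonal; the other two cases of $\sigma$ are identical up to relabelling. Hence every $6$-cycle of $\Gamma_Y$ has a diagonal and, by \Propref{mgchar}, $X_Y$ is quadric.

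I do not anticipate a serious obstacle. The one thing to get right is the bookkeeping that matches the three antipodal pairs of the $6$-cycle bijectively with the three pairwise intersections of $F_1$, $F_2$, $F_3$, so that the single Strong Helly containment lands precisely on one antipodal incidence; once the bipartite form above is written out this correspondence is forced. The only other subtlety, that the $0$-cell supplied really lies on the boundary of the opposite cell, is automatic since no $0$-cell lies in the interior of a $2$-cell, and is in any case recorded by \Corref{cftfint}.
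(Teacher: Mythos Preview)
Your proof is correct and follows essentially the same route as the paper: verify the characterization in \Propref{mgchar}\itmref{sxcc}, get simple connectivity from \Lemref{quadsc}, and use the Strong Helly Property (\Propref{stronghelly}) to produce a diagonal in every $6$-cycle. The only omission is that the paper explicitly invokes \Propref{cftfemb} to ensure the $2$-cells of $Y$ are embedded, which is needed both for the quadrization to be defined and for the hypothesis of \Lemref{quadsc}; you should cite it before applying that lemma.
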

\begin{proof}
  By \Propref{cftfemb}, the $2$-cells of $Y$ are embedded, so by
  \Lemref{quadsc}, $X_Y$ is simply connected.  Then, by
  \Propref{mgchar}, it suffices to show that embedded $6$-cycles of
  $\sk{1}{X_Y}$ have diagonals.  An embedded $6$-cycle in $X_Y$
  corresponds to a triple of pairwise intersecting $2$-cells $F_1$,
  $F_2$ and $F_3$ in $Y$ and three $0$-cells, one contained in each of
  the three pairwise intersections.  By \Propref{stronghelly},
  $F_1 \cap F_2 \subset F_3$, after possibly reindexing.  Then the
  $0$-cell contained in $F_1 \cap F_2$ is incident to $F_3$.  Hence
  the $6$-cycle has a diagonal.
\end{proof}

\begin{thm}
  \thmlabel{cftfquad} Let $G$ be a group acting properly and
  cocompactly on a simply connected $\cftf$ $2$-complex $Y$.  Then $G$
  is quadric.
\end{thm}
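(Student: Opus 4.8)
The plan is to reduce the theorem directly to \Lemref{cftfq} together with the remarks on quadrization that precede it. The substantive content—that the quadrization of a simply connected $\cftf$ complex is quadric—has already been established, so all that remains is to arrange the hypotheses of that lemma and to transport the group action to the quadrization.

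First I would dispose of the standing assumption that every $1$-cell of $Y$ lies on the boundary of some $2$-cell. If this fails, I would replace $Y$ by the thickened complex $Y'$ described in the text: each offending $1$-cell is subdivided, if necessary, and then thickened to a $2$-cell. Since $Y'$ deformation retracts onto $Y$, it is again simply connected; it is again $\cftf$; and because the construction is natural, the inclusion $Y \hookrightarrow Y'$ is $\Aut(Y)$-equivariant, so that $G$ still acts on $Y'$. This action remains proper and cocompact, the new cells having been adjoined $G$-equivariantly in finitely many orbits and without enlarging any cell-stabilizer. Thus I may assume without loss of generality that every $1$-cell of $Y$ meets a $2$-cell, so that the quadrization $X_Y$ is connected.

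Next I would invoke \Lemref{cftfq}: as $Y$ is simply connected and $\cftf$, its quadrization $X_Y$ is quadric. It then only remains to observe, exactly as noted in the remark preceding \Lemref{quadsc}, that a proper cocompact action of $G$ on $Y$ induces a proper cocompact action of $G$ on $X_Y$. Indeed $X_Y$ is built functorially from the $0$- and $2$-cells of $Y$ through the bipartite graph $\Gamma_Y$, so the $G$-action on $Y$ descends to an action on $\Gamma_Y$ and hence on its $4$-flag completion $X_Y = \overline{\Gamma}_Y$, with compact quotient and finite cell-stabilizers inherited from the action on $Y$. Consequently $G$ acts properly and cocompactly on a quadric complex; that is, $G$ is quadric.

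I do not expect a genuine obstacle here, since the heavy lifting resides in \Lemref{cftfq} and ultimately in the Strong Helly Property (\Propref{stronghelly}) on which it depends. The only point requiring any care is verifying that properness and cocompactness truly survive both the passage to $Y'$ and the functorial passage to $X_Y$; in each case this reduces to checking that only finitely many orbits of cells are added, for cocompactness, and that no cell-stabilizer is enlarged, for properness, both of which follow at once from the $G$-equivariance of the two constructions.
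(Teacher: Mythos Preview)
Your proposal is correct and follows essentially the same approach as the paper: invoke \Lemref{cftfq} to conclude that the quadrization $X_Y$ is quadric, and then transport the proper cocompact $G$-action from $Y$ to $X_Y$ using the functoriality of the quadrization (as the paper itself remarks immediately after defining $X_Y$). You are a bit more explicit than the paper in discharging the standing assumption that every $1$-cell of $Y$ lies on a $2$-cell, but otherwise the two arguments coincide.
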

\begin{proof}
  The quadrization $X_Y$ of $Y$ is quadric, by \Lemref{cftfq}.  Since
  the action of $G$ on the $0$-cells and $2$-cells of $Y$ preserves
  incidences, $G$ acts on $X_Y$.  The stabilizers of $0$-cells and
  $2$-cells are trivial in $Y$, so then are the stabilizers of vertices
  and edges in $X_Y$.  Then the action must be proper since the
  stabilizer of a square contains the pointwise stabilizer as a finite
  index subgroup and the pointwise stabilizer must be trivial since it
  stabilizes the vertices of the square.  This action is also
  cocompact and so we are done.
\end{proof}

\begin{cor}
  \corlabel{cftfpresquad} Let $G$ be a group admitting a finite $\cftf$
  presentation $\gpres{\mcS}{\mcR}$.  Then $G$ is quadric.
\end{cor}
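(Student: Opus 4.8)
The plan is to deduce the corollary directly from \Thmref{cftfquad} by producing, from a finite $\cftf$ presentation, a simply connected $\cftf$ $2$-complex on which $G$ acts properly and cocompactly. This is essentially a bookkeeping step that invokes the standard correspondence between presentations and their presentation $2$-complexes, together with facts about the $\scC(p)$ and $\scT(q)$ properties that were already recorded in \Ssecref{cptq}.

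**First** I would form the standard presentation $2$-complex $Y_0$ associated to $\gpres{\mcS}{\mcR}$: a single $0$-cell, one $1$-cell (loop) for each generator in $\mcS$, and one $2$-cell for each relator in $\mcR$, with attaching maps reading off the relator words. By definition, the presentation $\gpres{\mcS}{\mcR}$ is $\cftf$ precisely when this associated complex is $\cftf$, so $Y_0$ is a finite $\cftf$ complex. Passing to the universal cover $Y = \ucov{Y_0}$ gives a simply connected complex which, as noted in \Ssecref{cptq}, is again $\cftf$ since the $\scC(p)$ and $\scT(q)$ properties are inherited by covers.

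**Next** I would invoke the observation made at the end of \Ssecref{cptq}: since the presentation is finite and $\cftf$, the group $G = \pi_1(Y_0)$ acts freely and cocompactly on the universal cover $\ucov{Y_0} = Y$ by deck transformations. A free action is in particular proper, so $G$ acts properly and cocompactly on the simply connected $\cftf$ $2$-complex $Y$. At this point all the hypotheses of \Thmref{cftfquad} are in place, and applying that theorem yields that $G$ is quadric, which is exactly the assertion of the corollary.

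**The only subtlety**, and the one potential obstacle, is the standing assumption imposed at the start of \Ssecref{quadrization} that every $1$-cell of $Y$ lies on the boundary of some $2$-cell, which is used implicitly through \Thmref{cftfquad}. In a presentation complex this can fail exactly for a generator that appears in no relator. As explained there, this is harmless: one subdivides and thickens each such $1$-cell into a $2$-cell, obtaining a complex $Y'$ that deformation retracts onto $Y$, preserves the $G$-action, and remains $\cftf$; one then applies \Thmref{cftfquad} to $Y'$. Since this reduction is already spelled out in the text and does not affect properness, cocompactness, simple connectivity, or the $\cftf$ condition, the corollary follows without further work.
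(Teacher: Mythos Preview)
Your proof is correct and follows exactly the same approach as the paper: form the presentation complex, pass to its universal cover to obtain a simply connected $\cftf$ complex on which $G$ acts freely (hence properly) and cocompactly, then invoke \Thmref{cftfquad}. The paper's proof is simply a terser version of what you wrote, and your discussion of the $1$-cell subtlety is a welcome elaboration of something the paper leaves implicit.
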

\begin{proof}
  The action of $G$ on the universal cover $\ucov{Y}$ of the complex
  $Y$ associated to the presentation $\gpres{\mcS}{\mcR}$ is free and
  cocompact.  Hence $G$ is quadric by \Thmref{cftfquad}.
\end{proof}

We include the following corollary which essentially presents the
$\cftf$ case of Huebschmann's theorem on the finite subgroups of small
cancellation groups \cite{Huebschmann:1979}.  Huebschmann's proof
originally relied on a theorem of a paper of Lyndon whose correct
proof was given later by Collins and Huebschmann \cite{Collins:1982}.
The corollary below does not rely on cohomological arguments, as does
Huebschmann's proof, so we believe it may be of independent interest.

\begin{cor}
  Let $G$ be a finite group acting on a simply connected, locally
  finite $\cftf$ $2$-complex $Y$.  Then $G$ stabilizes a $0$-cell, a
  $1$-cell or the boundary of a $2$-cell of $Y$.
\end{cor}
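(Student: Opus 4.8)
The plan is to transport the problem to the quadrization $X_Y$ of $Y$, apply the Invariant Biclique Theorem, and then read the invariant biclique back as a configuration of cells in $Y$. First I would note that, as throughout this section, we may assume every $1$-cell of $Y$ lies on the boundary of a $2$-cell, so that $X_Y$ is connected (the general case reduces to this via the $\Aut(Y)$-equivariant thickening $Y \hookrightarrow Y'$, a $G$-invariant cell of $Y'$ producing one of $Y$). By \Lemref{cftfq} the quadrization $X_Y$ is quadric, and it is locally finite because $Y$ is: each $0$-cell of $Y$ meets finitely many $2$-cells and each $2$-cell has finite boundary, so $\Gamma_Y = \sk{1}{X_Y}$ and hence its $4$-flag completion are locally finite. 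The cellular action of $G$ on $Y$ preserves incidence between $0$-cells and $2$-cells, hence induces an action on $\Gamma_Y$ and on $X_Y$. If $X_Y$ is a single vertex then $Y$ is a single $0$-cell and $G$ fixes it; otherwise \Thmref{ibt} gives a $G$-invariant biclique $B$ of $X_Y$.

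Next I would unwind what $B$ is. Since $\Gamma_Y$ is bipartite with parts $Y_0$ and $Y_2$, and $G$ maps $0$-cells to $0$-cells and $2$-cells to $2$-cells, the two parts $A = B \cap Y_0$ and $\mathcal{F} = B \cap Y_2$ of the biclique are each nonempty and $G$-invariant, and completeness of $B$ means every $0$-cell of $A$ lies on the boundary of every $2$-cell of $\mathcal{F}$. If $|\mathcal{F}| = 1$, then its unique cell $F$ is $G$-invariant and $G$ stabilizes $\partial F$, as required. So the substantive case is $|\mathcal{F}| \ge 2$, where I would consider the $G$-invariant intersection $Z = \bigcap_{F \in \mathcal{F}} F$, which is nonempty since it contains $A$.

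The key claim is that $Z$ is a (possibly closed) path in $\sk{1}{Y}$. By \Corref{cftfint} each pairwise intersection of cells of $\mathcal{F}$ is such a path, and I would prove the claim by induction, intersecting one cell at a time: given that $F_1 \cap \cdots \cap F_{k-1}$ equals some pairwise intersection $F_a \cap F_b$, the Strong Helly Property (\Propref{stronghelly}) applied to $F_a, F_b, F_k$ shows that $F_a \cap F_b \cap F_k$ is again a pairwise intersection, hence again a path. Thus $Z = F_a \cap F_b$ for some $a, b$, a finite $G$-invariant path. Now I would split on its type. If $Z$ is a closed path, then since the $2$-cells are embedded (\Propref{cftfemb}) their boundaries are embedded cycles containing the cycle $Z$, forcing $Z = \partial F$ for each $F \in \mathcal{F}$; thus $Z$ is the $G$-invariant boundary of a $2$-cell and we are done. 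If $Z$ is a non-closed segment, then $G$ acts on it by graph automorphisms of a finite path, whose automorphism group is at most $\Z/2$, so the central vertex or central edge of $Z$ is fixed, giving the required invariant $0$-cell or $1$-cell.

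The hard part will be the path claim together with its closed subcase: one must ensure the iterated intersection stays connected (this is what the Strong Helly induction buys) and that the cyclic possibility is absorbed by the ``boundary of a $2$-cell'' alternative rather than obstructing the argument—indeed this is precisely why the conclusion must allow $\partial F$ as an option, since a finite group can act on a cycle with no invariant vertex or edge. The reduction to the standing assumption on $1$-cells also deserves care, as the thickening introduces new cells and one must verify that an invariant cell of $Y'$ descends to an invariant $0$-cell, $1$-cell, or $2$-cell boundary of $Y$.
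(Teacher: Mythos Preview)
Your proposal is correct and follows essentially the same route as the paper's own proof: pass to the quadrization $X_Y$, apply the Invariant Biclique Theorem, intersect the $2$-cells in the $Y_2$-side of the biclique, use the Strong Helly Property inductively to reduce this intersection to a pairwise one $F_a \cap F_b$, and then split on whether the resulting (possibly closed) path is all of $\partial F_a$ or a proper segment whose midpoint is fixed. Your write-up is in fact more careful than the paper's on several points the paper leaves implicit (local finiteness of $X_Y$, the $|\mathcal{F}|=1$ case, why a closed $Z$ forces $Z=\partial F$, and the descent through the thickening $Y\hookrightarrow Y'$).
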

\begin{proof}
  We may assume that $Y$ is not a single $0$-cell so that the
  quadrization $X_Y$ of $Y$ is not a single vertex.  By \Thmref{ibt},
  there is an invariant biclique of $X_Y$.  Let
  $\{v_i\}_{i=1}^{k} \cup \{F_j\}_{j=1}^{\ell}$ be the vertex set of
  this invariant biclique with the $v_i$ corresponding to $0$-cells of
  $Y$ and the $F_j$ corresponding to $2$-cells of $Y$.  Then
  $\bigcap_{j=1}^{\ell} F_j$ is invariant and
  $\{v_i\}_{i=1}^{k} \subset \bigcap_{j=1}^{\ell} F_j$ so
  $\bigcap_{j=1}^{\ell} F_j$ is nonempty.  By applying
  \Propref{stronghelly} inductively to $\bigcap_{j=1}^{\ell} F_j$ we
  see that $\bigcap_{j=1}^{\ell} F_j = F_j \cap F_{j'}$ for some $j$
  and $j'$.  So, by \Propref{cftfemb} and \Propref{cftfint}, either
  $G$ stabilizes the boundary of $F_j$ or $G$ stabilizes an embedded
  subpath $P$ of the boundary of $F_j$.  In the former case we are
  done and in the latter case $G$ stabilizes the midpoint of $P$
  which, depending on the parity of $|P|$, is either contained in an
  invariant $0$-cell or an invariant $1$-cell.
\end{proof}

\bibliographystyle{abbrv}
\bibliography{nima}
\end{document}